\newcommand{\spacedcdot}{{\,\cdot\,}}
\newcommand{\Q}{{\mathbb{Q}}}
\newcommand{\C}{{\mathbb{C}}}
\newcommand{\R}{{\mathbb{R}}}
\newcommand{\Z}{{\mathbb{Z}}}
\newcommand{\GL}{{\mathrm{GL}}}
\newcommand{\SU}{{\mathrm{SU}}}
\newcommand{\Sp}{{\mathrm{Sp}}}
\newcommand{\SO}{{\mathrm{SO}}}
\newcommand{\SL}{{\mathrm{SL}}}
\newcommand{\U}{{\mathrm{U}}}
\DeclareMathOperator{\vol}{{\mathrm{vol}}}
\DeclareMathOperator{\artanh}{{\mathrm{artanh}}}
\newcommand{\calD}{{\mathcal{D}}}
\newcommand{\calH}{{\mathcal{H}}}
\newcommand{\calL}{{\mathcal{L}}}
\newcommand{\calR}{{\mathcal{R}}}
\newcommand{\calU}{{\mathcal{U}}}
\providecommand{\abs}[1]{\left\lvert#1\right\rvert}
\providecommand{\norm}[1]{\left\lVert#1\right\rVert}
\providecommand{\scal}[2]{\left<#1,#2\right>}
\newtheorem{theorem}{Theorem}[section]
\newtheorem{lemma}[theorem]{Lemma}
\newtheorem{proposition}[theorem]{Proposition}
\newtheorem{corollary}[theorem]{Corollary}
\theoremstyle{definition}
\newtheorem{example}[theorem]{Example}
\theoremstyle{remark}
\newtheorem{remark}[theorem]{Remark}
\newtheorem*{remark*}{Remark}
\newtheoremstyle{named}{}{}{\itshape}{}{\bfseries}{.}{.5em}{#1 \thmnote{#3}}
\theoremstyle{named}
\numberwithin{equation}{section}
\title[Poincar\'e series on irreducible bounded symmetric domains]{On the non-vanishing of Poincar\'e series on irreducible bounded symmetric domains}
\author{Sonja \v Zunar}
\address{University of Zagreb,
	Faculty of Geodesy,
	Ka\v ci\'ceva 26,
	10000 Zagreb,
	Croatia}
\email{szunar@geof.hr}
\subjclass[2020]{11F55, 11F70, 22E46}
\thanks{This work is supported by the Croatian Science Foundation under the project number HRZZ-IP-2022-10-4615.}
\keywords{Holomorphic automorphic forms; bounded symmetric domains; non-vanishing of Poincar\'e series}
\patchcmd\newpage{\vfil}{}{}{}
\begin{document}

\begin{abstract}
	Let $ \mathcal D\equiv G/K $ be an irreducible bounded symmetric domain. Using a vector-valued version of Mui\'c's integral non-vanishing criterion for Poincar\'e series on locally compact Hausdorff groups, we study the non-vanishing of holomorphic automorphic forms on $ \mathcal D $ that are given by Poincaré series of polynomial type and correspond via the classical lift to the Poincaré series of certain $ K $-finite matrix coefficients of integrable discrete series representations of $ G $. We provide an example application of our results in the case when $ G=\SU(p,q) $ and $ K=\mathrm S(\U(p)\times\U(q)) $ with $ p\geq q\geq1 $.
\end{abstract}

\maketitle

\section{Introduction}

Holomorphic automorphic forms on bounded symmetric domains have long sparked the interest of researchers in representation theory and number theory (see, e.g., \cite{garrett83, foth07, alluhaibi_barron19}). In \cite{cartan35}, \'E.\ Cartan proved that every irreducible bounded symmetric domain $ \mathcal D $ is biholomorphic to an irreducible Hermitian symmetric space $ G/K $, where $ G $ is one of the Lie groups $ \SU(p,q) $, $ \Sp(n,\R) $, $ \SO^*(2n) $, and $ \SO_\circ(n,2) $ with suitable $ p,q,n\in\Z_{>0} $ or one of the exceptional Lie groups $ E_{6(-14)} $ and $ E_{7(-25)} $, and $ K $ is a maximal compact subgroup of $ G $ \cite[Ch.\,X, \S6.3]{helgason78}. In the case when $ G=\Sp(n,\R) $, the domain $ \mathcal D $ is biholomorphic to the Siegel upper half-space $ \mathcal H_n $ of degree $ n $. Other types of irreducible bounded symmetric domains have analogous unbounded realizations as Siegel domains of the first and second kind \cite{pyatetskii-shapiro69}.

In \cite{zunar24}, we used G.\ Mui\'c's integral non-vanishing criterion for Poincar\'e series on locally compact Hausdorff groups to study the non-vanishing of vector-valued Siegel cusp forms on $ \mathcal H_n $ that lift in a standard way to the Poincar\'e series of certain $ K $-finite matrix coefficients of integrable discrete series representations of $ \Sp(n,\R) $. In this paper, we adapt the methods of \cite{zunar24} to study the non-vanishing of holomorphic automorphic forms on a general irreducible bounded symmetric domain $ \mathcal D $ that are given by Poincar\'e series of polynomial type and lift to Poincar\'e series of $ K $-finite matrix coefficients of integrable discrete series representations of a suitable Lie group $ G $.

Let us give a brief overview of our main results. We work with the Harish-Chandra realization of irreducible bounded symmetric domains, defined as follows. Let $ \mathfrak g $ be a simple, non-compact real Lie algebra with Cartan decomposition $ \mathfrak g=\mathfrak k\oplus\mathfrak p $ and a compact Cartan subalgebra $ \mathfrak h $. Writing $ \mathfrak a_\C=\mathfrak a\otimes_\R\C $ for a real Lie algebra $ \mathfrak a $, we fix a positive root system $ \Delta^+ $ for $ \mathfrak g_\C $ with respect to $ \mathfrak h_\C $ such that every $ \alpha $ in the set $ \Delta_n^+ $ of non-compact positive roots is totally positive (see Section \ref{sec:104}). Let $ G_\C $ be the simply connected complex Lie group with Lie algebra $ \mathfrak g_\C $. Let $ G $, $ K $, $ K_\C $, $ P_\C^+ $, and $ P_\C^- $ be the connected Lie subgroups of $ G_\C $ with Lie algebras $ \mathfrak g $, $ \mathfrak k $, $ \mathfrak k_\C $, $ \mathfrak p_\C^+=\sum_{\alpha\in\Delta_n^+}\mathfrak g_\alpha $, and $ \mathfrak p_\C^-=\sum_{\alpha\in\Delta_n^+}\mathfrak g_{-\alpha} $, respectively. Every $ g\in P_\C^+K_\C P_\C^- $ has a unique factorization $ g=g_+g_0g_- $ with $ g_+\in P_\C^+ $, $ g_0\in K_\C $, and $ g_-\in P_\C^- $. We have $ GK_\C P_\C^-\subseteq P_\C^+K_\C P_\C^- $, and the quotient $ G/K\equiv GK_\C P_\C^-/K_\C P_\C^- $ identifies with the irreducible bounded symmetric domain 
\[ \mathcal D=\left\{\exp^{-1}(g_+):g\in G\right\}\subseteq\mathfrak p_\C^+, \]
where $ \exp $ is the exponential map $ \mathfrak p_\C^+\to P_\C^+ $.
Under this identification, the canonical left action of $ G $ on $ G/K $ transforms into the action of $ G $ on $ \mathcal D $ given by
\[ g.x=\exp^{-1}((g\exp x)_+),\quad g\in G,\ x\in\mathcal D. \]

Given $ \alpha\in\Delta^+ $, there exists a unique $ H_\alpha\in[\mathfrak g_\alpha,\mathfrak g_{-\alpha}] $ such that $ \alpha(H_\alpha)=2 $. We fix an analytically integral linear functional $ \Lambda $ on $ \mathfrak h_\C $ such that $ \Lambda(H_\alpha)\geq0 $ for all compact roots $ \alpha\in\Delta^+ $ and 
\[ (\Lambda+\delta)(H_\alpha)<\textstyle-\frac12\sum_{\beta\in\Delta^+}\abs{\beta(H_\alpha)} \]
for all $ \alpha\in\Delta_n^+ $, where $ \delta=\frac12\sum_{\beta\in\Delta^+}\beta $.
It is well known that the irreducible unitary representation  $ (\rho,V) $ of $ K $ of highest weight $ \Lambda $ extends to an irreducible holomorphic representation of $ K_\C $ and occurs with multiplicity one in the integrable discrete series representation $ (\pi_\rho,\calH_\rho) $ of $ G $ of highest weight $ \Lambda $ (see Lemma \ref{lem:022}).

Let $ \Gamma $ be a discrete subgroup of finite covolume in ${} G $ such that $ \pi_\rho $ occurs in the right regular representation $ \left(R, L^2(\Gamma\backslash G)\right) $ of $ G $ with finite multiplicity. This condition is satisfied, e.g., if $ G $ and $ \Gamma $ satisfy one of the following (see Remark \ref{rem:152}):
\begin{enumerate}
	\item The quotient space $ \Gamma\backslash G $ is compact.
	\item $ G=\mathcal G(\mathbb R) $ for a connected semisimple algebraic group $ \mathcal G $ defined over $ \Q $, and $ \Gamma $ is an arithmetic subgroup of $ \mathcal G(\Q) $.
\end{enumerate}
 
We consider the space $ \calH_\rho^\infty(\Gamma) $ of holomorphic automorphic forms of weight $ \rho $ for $ \Gamma $ on $ \mathcal D $, defined as the space of holomorphic functions $ f:\mathcal D\to V $ with the following two properties:
\begin{enumerate}[label={(\arabic*)}]
	\item $ f(\gamma.x)=\rho(\left(\gamma\exp x\right)_0)f(x) $ for all $ \gamma\in\Gamma $ and $ x\in\mathcal D $.
	\item $ \sup_{x\in \mathcal D}\norm{\rho(\textbf{\textit x}_0)^{-1}f(x)}_V<\infty $, where $ \textbf{\textit x}\in G/K $ is such that $ \textbf{\textit x}.0=x $.
\end{enumerate}
We give a representation-theoretic proof of the following classical result (see Theorem \ref{thm:100a}).

\begin{theorem}\label{thm:100}
	The space $ \calH_\rho^\infty(\Gamma) $ is the space of (absolutely and locally uniformly convergent) Poincar\'e series
	\[ \left(P_{\Gamma,\rho}f\right)(x)=\sum_{\gamma\in\Gamma}\rho(\left(\gamma\exp x\right)_0)^{-1}f(\gamma.x),\quad x\in\mathcal D, \]
	where $ f $ goes over the space $ \mathcal P(\calD,V) $ of polynomial functions $ \mathcal D\to V $.
\end{theorem}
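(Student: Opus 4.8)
The plan is to prove Theorem~\ref{thm:100} by passing through the classical lift to $L^2(\Gamma\backslash G)$ and identifying both sides of the claimed equality with a single space of automorphic forms attached to the integrable discrete series $\pi_\rho$. Concretely, I would first set up the lift: to a function $f\colon\calD\to V$ satisfying the automorphy condition~(1) one associates the function $\tilde f\colon G\to V$, $\tilde f(g)=\rho(\textbf{\textit g}_0)^{-1}f(g.0)$ (suitably interpreted via the $P_\C^+K_\C P_\C^-$ factorization), which is left $\Gamma$-invariant and right $K$-equivariant of type $\rho$; holomorphy of $f$ translates into $\tilde f$ being annihilated by $\mathfrak p_\C^-$, i.e.\ lying in the lowest-weight/holomorphic part of the relevant induced picture, and the growth condition~(2) becomes boundedness of $\|\tilde f\|_V$ on $\Gamma\backslash G$. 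The key representation-theoretic input, presumably Lemma~\ref{lem:022} together with the finite-multiplicity hypothesis on $\Gamma$, is that the span of such lifts is exactly the $\pi_\rho$-isotypic, $K$-type-$\rho$, $\mathfrak p_\C^-$-annihilated subspace of $L^2(\Gamma\backslash G)$, and moreover that this subspace is finite-dimensional and consists of bounded (indeed rapidly decreasing, by integrability of $\pi_\rho$) functions. This shows $\calH_\rho^\infty(\Gamma)$ is finite-dimensional and embeds into $\Gamma\backslash G$-automorphic forms generating copies of $\pi_\rho$.

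Next I would handle the Poincar\'e series side. Given a polynomial $f\in\mathcal P(\calD,V)$, its lift $\tilde f$ is a $K$-finite, $\mathfrak p_\C^-$-annihilated vector in the space of $\pi_\rho$ (a $K$-finite matrix coefficient up to the identification in Lemma~\ref{lem:022}); since $\pi_\rho$ is an \emph{integrable} discrete series, such matrix coefficients lie in $L^1(G)$ (in fact in $L^1\cap L^2$), so the Poincar\'e series operator $P_\Gamma$ sending $\varphi\in L^1(G)$ to $\sum_{\gamma\in\Gamma}\varphi(\gamma\,\cdot\,)$ converges absolutely and locally uniformly and maps into $L^2(\Gamma\backslash G)$; one then checks that $P_\Gamma\tilde f$ descends, under the inverse of the lift, precisely to the series $\left(P_{\Gamma,\rho}f\right)(x)=\sum_{\gamma\in\Gamma}\rho(\left(\gamma\exp x\right)_0)^{-1}f(\gamma.x)$, and that its local uniform convergence on $\calD$ follows from that of the group-level series. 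This gives the inclusion $P_{\Gamma,\rho}(\mathcal P(\calD,V))\subseteq\calH_\rho^\infty(\Gamma)$, together with the asserted convergence.

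For the reverse inclusion I would argue by surjectivity of the Poincar\'e series map onto the discrete-series part. The standard mechanism: the map $\varphi\mapsto P_\Gamma\varphi$ on the $\pi_\rho$-isotypic, $K$-type $\rho$, $\mathfrak p_\C^-$-null subspace of $L^1(G)$ is, up to a positive constant (the Harish-Chandra formal degree), the adjoint/orthogonal projection realizing the occurrence of $\pi_\rho$ in $L^2(\Gamma\backslash G)$; because $\pi_\rho$ occurs with finite multiplicity and the relevant $K$-type space in $\pi_\rho$ is one-dimensional (multiplicity-one of $\rho$ in $\pi_\rho$), every vector in the corresponding subspace of $L^2(\Gamma\backslash G)$ — equivalently, by the first step, every $\tilde f$ with $f\in\calH_\rho^\infty(\Gamma)$ — is hit by $P_\Gamma$ applied to some $K$-finite matrix coefficient, which downstairs is a polynomial function on $\calD$. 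Translating back through the lift gives $\calH_\rho^\infty(\Gamma)\subseteq P_{\Gamma,\rho}(\mathcal P(\calD,V))$, completing the proof.

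The main obstacle I anticipate is the bookkeeping in the lift: showing cleanly that (i) holomorphy of $f$ on $\calD$ is equivalent to $\mathfrak p_\C^-$-annihilation of $\tilde f$, (ii) condition~(2)'s supremum is genuinely the $L^\infty(\Gamma\backslash G)$-norm of $\|\tilde f\|_V$, and (iii) the precise identification of $K$-finite matrix coefficients of $\pi_\rho$ (which a priori are $\C$-valued functions on $G$) with $V$-valued lifts of polynomials on $\calD$ — this last point is exactly where the extension of $\rho$ to $K_\C$ and the realization of $\pi_\rho$ on holomorphic sections (Lemma~\ref{lem:022}) must be invoked carefully. The analytic convergence claims ($L^1$-integrability of integrable-discrete-series matrix coefficients, local uniform convergence of $P_\Gamma$) are standard once the setup is in place, so the real work is organizing the dictionary between the domain picture and the group picture so that all three claims above are transparent.
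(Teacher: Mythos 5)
Your proposal follows essentially the same route as the paper: lift $f$ to a $\Gamma$-invariant, right $K$-equivariant, $\mathfrak p_\C^-$-annihilated function on $G$, identify $\calH_\rho^\infty(\Gamma)$ with the corresponding extreme-$K$-type/weight subspace of the discrete-series isotypic component of $L^2(\Gamma\backslash G)$ (Theorem \ref{thm:019}), deduce absolute and locally uniform convergence from integrability of $\pi_\rho$, and obtain surjectivity of the Poincar\'e series map from Mili\v ci\'c's lemma (Proposition \ref{prop:027}) combined with admissibility coming from $m_\Gamma(\pi_\rho)<\infty$ and the identification of the relevant $K$-finite matrix coefficients with lifts of polynomials (Proposition \ref{prop:009}). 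The only discrepancy is one of labeling: under the paper's conventions the lift $F_f$ (equivalently $v_\Lambda^*F_f$) generates the contragredient $\pi_\rho^*$ under the right regular representation, so the target space is $\left(L^2(\Gamma\backslash G)_{[\pi_\rho^*]}\right)_{-\Lambda}$ rather than a $\pi_\rho$-isotypic subspace.
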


Let us briefly describe our proof of Theorem \ref{thm:100}, which is based on the methods of \cite[proof of Theorem 1-1]{muic10} and \cite[proof of Theorem 1.1]{zunar24}.
Let $ v_\Lambda\in V $ be a highest weight vector for $ \rho $ such that $ \norm{v_\Lambda}_V=1 $. Given $ f:\mathcal D\to V $, let $ F_f:G\to V $,
\[ F_f(g)=\rho(g_0)^{-1}f(g.0), \]
and let $ v_\Lambda^*F_f=\scal{F_f(\spacedcdot)}{v_\Lambda}_{V}:G\to\C $.
We show that the assignment $ f\mapsto v_\Lambda^*F_f $ defines a linear isomorphism $ \Phi_{\rho,\Gamma} $ from $ \calH_\rho^\infty(\Gamma) $ to the space $ \calL_\rho^\infty(\Gamma)=\left(L^2(\Gamma\backslash G)_{[\pi_\rho^*]}\right)_{-\Lambda} $
of weight $ -\Lambda $ vectors in the $ \pi_\rho^* $-isotypic component of the right regular representation of $ G $ on $ L^2(\Gamma\backslash G) $, where $ (\pi_\rho^*,\calH_\rho^*) $ is the contragredient representation of $ \pi_\rho $ (see Theorem \ref{thm:019}). Next, applying D.\ Mili\v ci\'c's result \cite[Lemma 6.6]{muic19} (see Proposition \ref{prop:027}), we prove that $ \calL_\rho^\infty(\Gamma) $ is the space of the (absolutely and locally uniformly convergent) Poincar\'e series
\[ \left(P_\Gamma c_{h,h'}\right)(g)=\sum_{\gamma\in\Gamma}c_{h,h'}(\gamma g) \]
of the matrix coefficients $ c_{h,h'}=\scal{\pi_\rho^*(\spacedcdot)h}{h'}_{\calH_\rho^*}:G\to\C $, where $ h $ is a weight $ -\Lambda $ vector of $ \pi_\rho^* $, and $ h' $ goes over the $ K $-finite vectors in $ \calH_\rho^* $. Finally, by computing the matrix coefficients $ c_{h,h'} $ (see Proposition \ref{prop:009}) and transferring the resulting description of $ \calL_\rho^\infty(\Gamma) $ via $  \Phi_{\rho,\Gamma}^{-1} $, we obtain Theorem \ref{thm:100}.

Let $ \mathfrak a $ be a maximal abelian subspace of $ \mathfrak p $. We fix a choice of the set $ \Sigma^+ $ of positive restricted roots of $ \mathfrak g $ with respect to $ \mathfrak a $ and let
\[ \mathfrak a^+=\left\{X\in\mathfrak a:\lambda(X)>0\text{ for all }\lambda\in\Sigma^+\right\}. \]
We denote by $ m_\lambda $ the multiplicity of a restricted root $ \lambda\in\Sigma^+ $.

With Theorem \ref{thm:100} and the linear isomorphism $ \Phi_{\rho,\Gamma}:\calH_\rho^\infty(\Gamma)\to\calL_\rho^\infty(\Gamma) $ at hand, in Section \ref{sec:154} we apply a vector-valued version, given by Proposition \ref{prop:101}, of Mui\'c's integral non-vanishing criterion \cite[Theorem 4.1]{muic09} to the Poincar\'e series $ P_{\Gamma}F_f=F_{P_{\Gamma,\rho}f} $ with $ f\in\mathcal P(\mathcal D,V) $, thus proving the following non-vanishing result.

\begin{theorem}\label{thm:103a}
	Let $ f\in\mathcal P(\mathcal D,V)\setminus\left\{0\right\} $. 
	Suppose that there exists a Borel set $ S\subseteq\mathfrak a^+ $ with the following properties:
	\begin{enumerate}[label=\textup{(S\arabic*)}]
		\item $ K\exp(S)K\exp(-S)K\cap\Gamma=\left\{1_{G}\right\} $.
		\item We have
			\[ \begin{aligned}
				\int_S\int_K&\norm{F_f(k\exp(X))}_V\left(\prod_{\lambda\in\Sigma^+}(\sinh\lambda(X))^{m_\lambda}\right)\,dk\,dX \\
				&>\frac12\int_{\mathfrak a^+}\int_K\norm{F_f(k\exp(X))}_V\left(\prod_{\lambda\in\Sigma^+}(\sinh\lambda(X))^{m_\lambda}\right)\,dk\,dX.
			\end{aligned}   \]
	\end{enumerate}
	Then, we have
	\[ P_{\Gamma,\rho}f\in\calH_\rho^\infty(\Gamma)\setminus\left\{0\right\} \]
	 and 
	 \[ P_{\Gamma}v_\Lambda^*F_f\in \calL_\rho^\infty(\Gamma)\setminus\left\{0\right\}. \]
\end{theorem}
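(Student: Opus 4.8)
The plan is to deduce Theorem \ref{thm:103a} by applying the vector-valued non-vanishing criterion (Proposition \ref{prop:101}) to the function $F_f\in L^1(G)\otimes V$ attached to $f\in\mathcal P(\mathcal D,V)$ via the classical lift, and then transporting the conclusion back to $\calH_\rho^\infty(\Gamma)$ through the isomorphism $\Phi_{\rho,\Gamma}$ of Theorem \ref{thm:019}. First I would recall from Theorem \ref{thm:100} and its proof that, for $f\in\mathcal P(\mathcal D,V)$, the lift $F_f$ lies in the $\pi_\rho$-isotypic subspace of suitable matrix coefficients and is integrable on $G$ (this is exactly the integrability that makes $P_\Gamma F_f$ converge and represent $P_{\Gamma,\rho}f$ under the lift), and that $P_\Gamma F_f = F_{P_{\Gamma,\rho}f}$. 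Then I would verify the hypotheses of Proposition \ref{prop:101}: condition (S1) says that on the set $K\exp(S)K$ the relevant intersection with $\Gamma$ is trivial, which is precisely the support/injectivity hypothesis of Mui\'c's criterion once one unwinds the $KAK$ decomposition; condition (S2) is the quantitative "more than half the mass" estimate, where the weight $\prod_{\lambda\in\Sigma^+}(\sinh\lambda(X))^{m_\lambda}$ is (up to a constant) the Jacobian of the $KAK$ integration formula on $G$, so the two integrals in (S2) are $\int_{K\exp(S)K}\norm{F_f}_V\,dg$ and $\tfrac12\int_G\norm{F_f}_V\,dg$ respectively.

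The key steps, in order, are: (1) normalize Haar measure on $G$ so that the $KAK$ integration formula reads $\int_G\phi(g)\,dg = c\int_{\mathfrak a^+}\int_K\int_K\phi(k_1\exp(X)k_2)\bigl(\prod_{\lambda\in\Sigma^+}(\sinh\lambda(X))^{m_\lambda}\bigr)\,dk_1\,dk_2\,dX$, and observe that $\norm{F_f(k_1\exp(X)k_2)}_V$ is independent of $k_2$ up to the unitary $\rho(k_2)^{-1}$ acting on the left by the definition $F_f(g)=\rho(g_0)^{-1}f(g.0)$, so the inner $k_2$-integral collapses and the $V$-norm integrals over $G$ reduce to the stated $\fraka^+\times K$ integrals; (2) rewrite (S1) as the statement that $g\mapsto g$ restricted to $K\exp(S)K$ maps injectively into $\Gamma\backslash G$, equivalently $\gamma\cdot K\exp(S)K\cap K\exp(S)K=\emptyset$ for $\gamma\in\Gamma\setminus\{1\}$, which is what Proposition \ref{prop:101} requires of the set on which $F_f$ is "concentrated"; (3) apply Proposition \ref{prop:101} with the set $K\exp(S)K$ to conclude $P_\Gamma F_f\neq 0$ in $L^2(\Gamma\backslash G)$; (4) identify $P_\Gamma F_f$ with $F_{P_{\Gamma,\rho}f}$ and use Theorem \ref{thm:019} (the isomorphism $\Phi_{\rho,\Gamma}$, together with the fact that $F_f\mapsto v_\Lambda^*F_f$ and $F_f\mapsto P_{\Gamma,\rho}f$ are compatible) to transfer non-vanishing: $P_\Gamma F_f\neq0$ forces $P_{\Gamma,\rho}f\neq0$ in $\calH_\rho^\infty(\Gamma)$ and, since $\Phi_{\rho,\Gamma}$ is injective, $P_\Gamma v_\Lambda^*F_f=\Phi_{\rho,\Gamma}(P_{\Gamma,\rho}f)\neq0$ in $\calL_\rho^\infty(\Gamma)$.

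I expect the main obstacle to be step (1): carefully matching the abstract hypothesis of Proposition \ref{prop:101}, which is phrased for a general $\phi\in L^1(G)$ concentrated on a Borel set, with the concrete $KAK$-geometry here, and in particular checking that the $k_2$-variable genuinely drops out of $\norm{F_f(k_1\exp(X)k_2)}_V$. This requires knowing how the lift transforms under the right $K$-action: from $F_f(gk)=\rho((gk\exp 0)_0)^{-1}f((gk).0)$ and the fact that $k$ fixes the base point $0\in\mathcal D$ with $(k\exp0)_0=k$, one gets $F_f(gk)=\rho(k)^{-1}\rho(g_0)^{-1}f(g.0)=\rho(k)^{-1}F_f(g)$, so $\norm{F_f(gk)}_V=\norm{F_f(g)}_V$ by unitarity of $\rho$; this is the one spot where the specific structure (the base point of $\mathcal D$ being $K$-fixed, the cocycle identity, and unitarity of $\rho$ on $K$) is used, and it must be stated precisely. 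A secondary, more routine point is confirming the normalization constant $c$ cancels identically from both sides of (S2), so that the factor $\tfrac12$ is exactly the threshold coming from Mui\'c's criterion; once these bookkeeping matters are settled, the rest is a direct citation of Proposition \ref{prop:101} and Theorem \ref{thm:019}.
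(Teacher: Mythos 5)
Your proposal is correct and follows essentially the same route as the paper: apply the vector-valued criterion (Proposition \ref{prop:101}) with $\varphi=F_f$ and $C=K\exp(S)K$, translate (C1) into (S1) via $CC^{-1}=K\exp(S)K\exp(-S)K$ and (C2) into (S2) via the $KAK$ integration formula and the right-$K$-invariance of $\norm{F_f}_V$ (Lemma \ref{lem:202}), then transfer $P_\Gamma F_f=F_{P_{\Gamma,\rho}f}\neq0$ to both conclusions using \eqref{eq:031}, Theorem \ref{thm:100a}, and the injectivity in Theorem \ref{thm:019}. The only cosmetic slip is that Proposition \ref{prop:101} yields non-vanishing in $L^1(\Gamma\backslash G,V)$ rather than $L^2$; this does not affect the argument.
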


In Section \ref{sec:160}, we make the above results explicit in the case when $ G=\SU(p,q) $ and $ K=\mathrm S(\U(p)\times\U(q)) $ with $ p\geq q\geq1 $. In Theorem \ref{thm:112}\ref{thm:112:3}, we rephrase Theorem \ref{thm:103a} in the case when
\[ \Gamma=\Gamma_G(N)\coloneqq\SU(p,q)\cap\big(I_{p+q}+M_{p+q}\left(N\Z\left[\sqrt{-1}\,\right]\right)\big) \]
for some $ N\in\Z_{>0} $, rendering it easily applicable to families of Poincar\'e series $ P_{\Gamma_G(N),\rho}f $ with fixed $ \rho $ and $ f $ and varying $ N $. In Example \ref{ex:162}, we give an example application of our non-vanishing result to an infinite family of scalar-valued Poincar\'e series, using Wolfram Mathematica 14.1 \cite{mathematica} for numerical computations; the code can be found in \cite{zunar25}.

\section{Basic notation}

Let $ i\in\C $ be the imaginary unit. Given a Lie group $ G $ with Lie algebra $ \mathfrak g $, let $ \calU(\mathfrak g) $ denote the universal enveloping algebra of the complex Lie algebra $ \mathfrak g_\C=\mathfrak g\otimes_\R\C $. Let $\mathcal Z(\mathfrak g) $ denote the center of the algebra $ \mathcal U(\mathfrak g) $.

Given a complex topological vector space $ H $, we denote by $ H^* $ the space of continuous linear functionals $ H\to\C $. If $ H $ is a Hilbert space, we denote by $ \scal\spacedcdot\spacedcdot_H $ the inner product on $ H $ and write $ h^*=\scal\spacedcdot h_H\in H^* $ for every $ h\in H $. Given a unitary representation $ \pi=(\pi,H) $ of a Lie group $ G $ on $ H $, we denote by $ H_K $ the $ (\mathfrak g,K) $-module of smooth, $ K $-finite vectors in $ H $ and, given a subset $ S $ of $ \calU(\mathfrak g) $, by $ (H_K)^S $ the space of vectors $ h\in H_K $ such that $ \pi(S)h=0 $. For every irreducible unitary representation $ \sigma $ of $ G $, we denote by $ H_{[\sigma]} $ the $ \sigma $-isotypic component of $ \pi $, i.e., the closure in $ H $ of the sum of irreducible closed $ G $-invariant subspaces of $ H $ that are equivalent to $ \sigma $. The contragredient representation $ \left(\pi^*,H^*\right) $ of $ \pi $ is given by
\begin{equation}\label{eq:035}
	\pi^*(g)h^*=\left(\pi(g)h\right)^*,\quad\  g\in G,\ h\in H.
\end{equation}

Given a unimodular Lie group $ G $ with a fixed Haar measure $ dg $, for every discrete subgroup $ \Gamma $ of $ G $ we equip the quotient  $ \Gamma\backslash G $ with the $ G $-invariant Radon measure such that
\[ \int_{\Gamma\backslash G}\sum_{\gamma\in\Gamma}\varphi(\gamma g)\,dg=\int_G\varphi(g)\,dg,\quad\ \varphi\in C_c(G), \]
where $ C_c(G) $ denotes the space of compactly supported, continuous functions $ G\to\C $.
Given $ p\in\R_{\geq1} $ and a finite-dimensional complex Hilbert space $ V $, let $ L^p(\Gamma\backslash G,V) $ denote the Banach space  of (equivalence classes of) $ p $-integrable functions $ \Gamma\backslash G\to V $. We write $ L^p(\Gamma\backslash G)=L^p(\Gamma\backslash G,\C) $ and denote by $ R $ the right regular representation of $ G $ on the Hilbert space $ L^2(\Gamma\backslash G,V) $. Given a function $ \varphi:G\to V $, we define a function $ \check\varphi:G\to V $,
\begin{equation}\label{eq:026}
	\check\varphi(g)=\varphi\big(g^{-1}\big).
\end{equation}

Let $ g^\top $, $ g^* $, and $ \mathrm{tr}(g) $ denote, respectively, the transpose, the conjugate transpose, and the trace of a complex matrix $ g $, and let $ \norm g=\left(\mathrm{tr}(g^*g)\right)^{\frac12} $. Given $ p,q,r\in\Z_{>0} $ such that $ r\leq\min\left\{p,q\right\} $ and $ t\in\C^r $, let 
\begin{equation}\label{eq:163}
	 t_{p\times q}=\mathrm{diag}(t_1,\ldots,t_r,0,\ldots,0)\in M_{p,q}(\C). 
\end{equation}
Given a function $ f:\C\to\C $ and $ t\in\C^r $, let $ f(t)=(f(t_1),\ldots,f(t_r))\in\C^r $.

\section{Representation-theoretic preliminaries}

Until the end of this section, we fix a connected semisimple Lie group $ G $ with finite center, a maximal compact subgroup $ K $ of $ G $, and a discrete subgroup $ \Gamma $ of $ G $.

Given a finite-dimensional complex vector space $ V $, we say that a function $ \varphi:G\to V $ is:
\begin{enumerate}[label={(\arabic*)}]
	\item\label{enum:045:1} left $ K $-finite if $ \dim_\C\mathrm{span}_\C\left\{\varphi(k\spacedcdot):k\in K\right\}<\infty $
	\item right $ K $-finite if $ \dim_\C\mathrm{span}_\C\left\{\varphi(\spacedcdot k):k\in K\right\}<\infty $
	\item\label{enum:045:3} $ \mathcal Z(\mathfrak g) $-finite if $ \varphi $ is smooth and $ \dim_\C\mathcal Z(\mathfrak g)\varphi<\infty $.
\end{enumerate}
By a straightforward generalization of Harish-Chandra's result \cite[Lemma 77]{harish66} from the $ L^2(G) $ case to the $ L^2(\Gamma\backslash G) $ case, we have the following lemma.

\begin{lemma}\label{lem:020}
	If a smooth function $ \varphi\in L^2(\Gamma\backslash G) $ is $ \mathcal Z(\mathfrak g) $-finite and right $ K $-finite, then the smallest closed $ G $-invariant subspace of $ (R,L^2(\Gamma\backslash G)) $ containing $ \varphi $ is an orthogonal sum of finitely many irreducible closed $ G $-invariant subspaces.
\end{lemma}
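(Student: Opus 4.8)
The plan is to deduce the statement from Harish-Chandra's $ (\mathfrak g,K) $-module machinery applied to the unitary representation $ (R,L^2(\Gamma\backslash G)) $; the passage from Harish-Chandra's original $ L^2(G) $ setting to $ L^2(\Gamma\backslash G) $ is purely cosmetic, since his argument uses only that the ambient space is a unitary $ G $-representation, not that $ \Gamma=\{1_G\} $.

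First I would let $ W\subseteq L^2(\Gamma\backslash G) $ be the span of $ \left\{R(k)\varphi:k\in K\right\} $, which is finite-dimensional by right $ K $-finiteness and consists of smooth vectors since $ \varphi $ is smooth, and set $ M=\calU(\mathfrak g)W $. Because $ \mathrm{Ad}(K) $ preserves each finite-dimensional piece $ \calU_n(\mathfrak g) $ of the standard filtration of $ \calU(\mathfrak g) $ and $ W $ is $ K $-invariant, the space $ M $ is a $ K $-invariant, $ K $-finite subspace of the smooth vectors, i.e.\ a finitely generated $ (\mathfrak g,K) $-submodule of $ L^2(\Gamma\backslash G) $. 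Since $ \mathcal Z(\mathfrak g) $ is central in $ \calU(\mathfrak g) $ and $ \varphi $---hence every element of $ W $---is annihilated by some ideal $ J\subseteq\mathcal Z(\mathfrak g) $ of finite codimension, the same $ J $ annihilates $ M $, so $ M $ is $ \mathcal Z(\mathfrak g) $-finite.

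Next I would invoke Harish-Chandra's admissibility theorem: a finitely generated, $ \mathcal Z(\mathfrak g) $-finite $ (\mathfrak g,K) $-module is admissible, and a finitely generated admissible $ (\mathfrak g,K) $-module has finite length; thus $ M $ is a Harish-Chandra module of finite length. I would then identify the smallest closed $ G $-invariant subspace $ \mathcal V $ of $ L^2(\Gamma\backslash G) $ containing $ \varphi $ with the closure $ \overline M $: since $ M $ is admissible, $ \overline M $ is $ G $-invariant and $ (\overline M)_K=M $ (a standard fact about admissible $ (\mathfrak g,K) $-submodules of unitary representations; alternatively, every element of $ M $ is $ \mathcal Z(\mathfrak g) $-finite and $ K $-finite, hence real-analytic on $ \Gamma\backslash G $ by elliptic regularity, hence an analytic vector for $ R $), and as $ \overline M $ is closed, contains $ \varphi $, and lies inside every closed $ G $-invariant subspace containing $ M $, it equals $ \mathcal V $.

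Finally I would argue by induction on the length of $ M $. Since $ M $ has finite length it contains an irreducible $ (\mathfrak g,K) $-submodule $ M_1 $; its closure $ \mathcal V_1=\overline{M_1} $ is a closed $ G $-invariant subspace with $ (\mathcal V_1)_K=M_1 $, and it is irreducible because any nonzero closed $ G $-invariant subspace of $ \mathcal V_1 $ has nonzero dense $ K $-finite part, which is then a nonzero submodule of the irreducible $ M_1 $. By unitarity $ \mathcal V=\mathcal V_1\oplus(\mathcal V_1^\perp\cap\mathcal V) $ as an orthogonal sum of closed $ G $-invariant subspaces, and passing to $ K $-finite vectors gives $ M=M_1\oplus(\mathcal V_1^\perp\cap\mathcal V)_K $, the second summand being a $ (\mathfrak g,K) $-module of strictly smaller length; applying the inductive hypothesis to $ \mathcal V_1^\perp\cap\mathcal V $ completes the proof. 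The only substantial ingredient is Harish-Chandra's admissibility/finiteness theorem; the rest is routine soft $ (\mathfrak g,K) $-module theory, and no difficulty is introduced by working over $ \Gamma\backslash G $ rather than $ G $.
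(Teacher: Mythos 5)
Your proposal is correct, and it follows essentially the same route as the paper, which offers no proof of its own but simply cites Harish-Chandra's Lemma 77 of \cite{harish66}: that lemma is proved exactly by the machinery you describe, namely that the $(\frakg,K)$-module $\calU(\frakg)\,\mathrm{span}_\C R(K)\varphi$ is finitely generated and $\calZ(\frakg)$-finite, hence admissible of finite length, and that its closure is the cyclic subspace generated by $\varphi$. The only point worth flagging is the passage from ``$\varphi$ is a smooth, $\calZ(\frakg)$-finite, right $K$-finite function'' to ``$\varphi$ is an analytic vector of $R$'' (needed both to form the $(\frakg,K)$-module and to see that $\overline{M}$ is $G$-invariant); you address this, and it is indeed standard, e.g.\ via $\varphi=\varphi*\alpha$ for a smooth compactly supported $\alpha$, the same fact the paper invokes from \cite{harish66} in the proof of Proposition \ref{prop:009}.
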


Given a unitary representation $ (\pi,H) $ of $ G $ and vectors $ h,h'\in H $, let $ c_{h,h'} $ denote the matrix coefficient $ \scal{\pi(\spacedcdot)h}{h'}_H:G\to\C $. An irreducible unitary representation $ (\pi,H) $ of $ G $ is said to be square-integrable (resp., integrable) if $ c_{h,h'}\in L^2(G) $ for all $ h,h'\in H $ (resp., if $ c_{h,h'}\in L^1(G) $ for all $ h,h'\in H_K $). 
If $ (\pi,H) $ is an integrable representation of $ G $, then for all $ h,h'\in H_K $, the matrix coefficient $ c_{h,h'} $ is a smooth function in $ L^1(G) $ with properties \ref{enum:045:1}--\ref{enum:045:3}. Thus, by the proof of \cite[Theorem 5.4]{baily_borel66} and the discussion in \cite[the beginning of \S6]{muic19}, we have the following lemma.

\begin{lemma}\label{lem:028}
	Let $ (\pi,H) $ be an integrable representation of $ G $, and let $ h,h'\in H_K $. Then, the Poincar\'e series 
	\[ \left(P_\Gamma c_{h,h'}\right)(g)=\sum_{\gamma\in\Gamma}c_{h,h'}(\gamma g) \]
	converges absolutely and uniformly on compact subsets of $ G $ and defines a bounded, right $ K $-finite, $ \mathcal Z(\mathfrak g) $-finite function $ \Gamma\backslash G\to\C $ that belongs to $ L^p(\Gamma\backslash G) $ for every $ p\in\R_{\geq1} $. 
\end{lemma}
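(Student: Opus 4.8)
The plan is to deduce Lemma~\ref{lem:028} by combining the three ingredients that the excerpt has just assembled: the integrability hypothesis on $(\pi,H)$, which makes each $c_{h,h'}$ with $h,h'\in H_K$ an $L^1(G)$-function; the classical argument of Baily--Borel on the absolute and uniform convergence of Poincar\'e series of $L^1$-functions; and Lemma~\ref{lem:020}, which supplies the $\calZ(\mathfrak g)$-finiteness part via right $K$-finiteness. Concretely, I would first recall that for an integrable representation, any $K$-finite matrix coefficient $c_{h,h'}$ is a smooth function lying in $L^1(G)$ and enjoying properties~\ref{enum:045:1}--\ref{enum:045:3}, i.e.\ it is left and right $K$-finite and $\calZ(\mathfrak g)$-finite; this is the content of the sentence immediately preceding the lemma, so it can simply be invoked.

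Next I would address convergence. The standard fact (the one cited in the excerpt as ``the proof of \cite[Theorem 5.4]{baily_borel66}'') is that if $\varphi\in L^1(G)$ and $\varphi$ is, say, continuous (more precisely smooth), then $\sum_{\gamma\in\Gamma}|\varphi(\gamma g)|$ converges uniformly on compact subsets of $G$; the key point is that for a fixed compact set $C\subseteq G$ the sets $\gamma C$ for $\gamma\in\Gamma$ overlap with bounded multiplicity, so the sum is dominated by a fixed multiple of $\int_G|\varphi(g)|\,dg$. Applying this to $\varphi=c_{h,h'}$ gives the asserted absolute and locally uniform convergence, and in particular that $P_\Gamma c_{h,h'}$ is a well-defined continuous (indeed smooth, by differentiating under the sum using the same domination on compacta) function on $\Gamma\backslash G$. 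Boundedness of $P_\Gamma c_{h,h'}$ on $\Gamma\backslash G$ follows from the same overlap estimate, now with $C$ a fixed compact fundamental domain-type set, giving the uniform bound $\sup_{g}\sum_{\gamma}|c_{h,h'}(\gamma g)|\le \mathrm{const}\cdot\|c_{h,h'}\|_{L^1(G)}$. Since $\Gamma\backslash G$ has finite volume (this is the setting of the paper; if one wants to avoid assuming it here, boundedness plus the volume hypothesis is what gives membership in all $L^p$), a bounded measurable function on $\Gamma\backslash G$ lies in $L^p(\Gamma\backslash G)$ for every $p\in\R_{\ge1}$; alternatively one interpolates between the $L^\infty$ bound and the $L^1$ bound $\|P_\Gamma c_{h,h'}\|_{L^1(\Gamma\backslash G)}\le\|c_{h,h'}\|_{L^1(G)}$, which comes directly from the defining property of the measure on $\Gamma\backslash G$ recalled in Section~2.

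Finally I would check the $K$-finiteness and $\calZ(\mathfrak g)$-finiteness of $P_\Gamma c_{h,h'}$ as a function on $\Gamma\backslash G$. Right $K$-finiteness and $\calZ(\mathfrak g)$-finiteness are inherited from $c_{h,h'}$: the right translation action of $K$ and the action of $\calZ(\mathfrak g)$ commute with the left translations by $\Gamma$, hence with the formation of $P_\Gamma$, so $\mathrm{span}_\C\{(P_\Gamma c_{h,h'})(\spacedcdot k):k\in K\}$ and $\calZ(\mathfrak g)(P_\Gamma c_{h,h'})$ are spanned by the images under $P_\Gamma$ of the corresponding finite-dimensional spaces attached to $c_{h,h'}$, and differentiation/translation pass through the locally uniformly convergent sum by the domination argument above. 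This gives everything claimed. The main obstacle, such as it is, is purely expository: making the overlap-of-translates convergence estimate precise enough to justify simultaneously (i) locally uniform convergence, (ii) the boundedness constant, and (iii) the legitimacy of differentiating term by term to get smoothness and $\calZ(\mathfrak g)$-finiteness; but since the excerpt explicitly points to \cite[Theorem 5.4]{baily_borel66} and \cite[\S6]{muic19} for exactly this, the proof can be kept short by citing those sources and only indicating how the three conclusions follow.
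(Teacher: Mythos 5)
Your overall route is the same as the paper's: the paper gives no proof beyond observing that $c_{h,h'}$ is a smooth $L^1(G)$-function that is left and right $K$-finite and $\calZ(\frakg)$-finite, and then citing the proof of Theorem 5.4 of Baily--Borel and the beginning of \S6 of Mui\'c's paper. Your treatment of the $L^p$ statement is also the right one for this section (interpolation between the $L^\infty$ bound and the unfolded $L^1$ bound, since finite covolume of $\Gamma$ is \emph{not} assumed here), and the inheritance of right $K$-finiteness and $\calZ(\frakg)$-finiteness by commuting $R(k)$ and $\calZ(\frakg)$ past the sum is correct once locally uniform convergence of the series and of its derivatives is in hand.

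There is, however, one genuine gap in the step where you try to explain the convergence mechanism. The ``standard fact'' as you state it --- that $\varphi\in L^1(G)$ smooth implies $\sum_{\gamma\in\Gamma}\abs{\varphi(\gamma g)}$ converges locally uniformly, with bounded overlap of the translates $\gamma C$ doing all the work --- is false. Already for $G=\R$, $\Gamma=\Z$, a smooth $L^1$ function with increasingly tall, narrow spikes at the integers gives a Poincar\'e series that diverges at $g=0$. Bounded multiplicity of the cover $\{\gamma g U\}_{\gamma\in\Gamma}$ only controls $\sum_\gamma\int_{\gamma gU}\abs{\varphi}$ by a multiple of $\norm{\varphi}_{L^1(G)}$; to pass from there to $\sum_\gamma\abs{\varphi(\gamma g)}$ one needs a pointwise sub-mean-value estimate $\abs{\varphi(x)}\leq C_U\int_{xU}\abs{\varphi(h)}\,dh$. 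That estimate is exactly what Harish-Chandra's theorem (the same \cite[Theorem 1]{harish66} invoked in the proof of Proposition \ref{prop:009}) supplies: a $K$-finite, $\calZ(\frakg)$-finite function satisfies $\varphi=\check\alpha*\varphi$ for some smooth $\alpha\in C_c(G)$, whence $\abs{\varphi(x)}\leq\norm{\alpha}_\infty\int_{x\,\mathrm{supp}(\alpha)^{-1}}\abs{\varphi}$. So the finiteness properties of $c_{h,h'}$ are not merely decorative hypotheses carried along for the last paragraph --- they are the crux of the convergence, boundedness, and term-by-term differentiation arguments (the latter because $zc_{h,h'}=c_{\pi(z)h,h'}$ is again such a matrix coefficient). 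With that correction your outline matches the argument the cited sources actually run.
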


We say that the multiplicity of an irreducible unitary representation $ \pi $ of $ G $ in $ \left(R,L^2(\Gamma\backslash G)\right) $ is finite, and write $ m_\Gamma(\pi)<\infty $, if $ L^2(\Gamma\backslash G)_{[\pi]} $ is an orthogonal sum of finitely many (possibly zero) irreducible closed $ G $-invariant subspaces. 

\begin{remark}\label{rem:152} 
	It is well known (see, e.g., \cite[Introduction]{harish68}) that $ m_\Gamma(\pi)<\infty $ for every $ \pi $ provided that $ G $ and $ \Gamma $ satisfy one of the following:
	\begin{enumerate}
		\item The quotient space $ \Gamma\backslash G $ is compact.
		\item $ G=\mathcal G(\mathbb R) $ for a connected semisimple algebraic group $ \mathcal G $ defined over $ \Q $, and $ \Gamma $ is an arithmetic subgroup of $ \mathcal G(\Q) $.
	\end{enumerate}
\end{remark}

The following proposition is a variant of Mili\v ci\'c's result \cite[Lemma 6.6]{muic19}. 

\begin{proposition}\label{prop:027}
	Let $ (\pi,H) $ be an integrable representation of $ G $ such that $ m_\Gamma(\pi)<\infty $. Then, we have
	\[ \left(L^2(\Gamma\backslash G)_{[\pi]}\right)_K=\mathrm{span}_\C\left\{P_\Gamma c_{h,h'}:h,h'\in H_K\right\}. \]
\end{proposition}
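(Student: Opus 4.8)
The plan is to prove the two inclusions separately. Throughout, abbreviate $W:=\mathrm{span}_\C\{P_\Gamma c_{h,h'}:h,h'\in H_K\}$ and $P:=L^2(\Gamma\backslash G)_{[\pi]}$, so the claim is $W=P_K$.

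\textbf{Step 1: $W\subseteq P_K$, and $W$ is a $(\frakg,K)$-submodule of $P_K$.} Fix $h,h'\in H_K$ and put $\psi:=P_\Gamma c_{h,h'}$. By Lemma \ref{lem:028}, $\psi$ is a smooth, bounded, right $K$-finite, $\calZ(\frakg)$-finite element of $L^2(\Gamma\backslash G)$, and smoothness together with right $K$-finiteness makes it a $K$-finite vector of $(R,L^2(\Gamma\backslash G))$. For fixed nonzero $h'$, the map $v\mapsto c_{v,h'}$ is an injective $(\frakg,K)$-homomorphism out of the irreducible module $H_K$ (injectivity by irreducibility of $\pi$); since $P_\Gamma$ intertwines the right regular actions, and differentiation passes through the locally uniform convergence of Lemma \ref{lem:028} (so $R(X)P_\Gamma c_{v,h'}=P_\Gamma c_{\pi(X)v,h'}$ and $R(k)P_\Gamma c_{v,h'}=P_\Gamma c_{\pi(k)v,h'}$), the composite $v\mapsto P_\Gamma c_{v,h'}$ is again a $(\frakg,K)$-homomorphism out of $H_K$. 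Hence $\psi$ lies in a $(\frakg,K)$-submodule of $L^2(\Gamma\backslash G)_K$ that is zero or isomorphic to $\pi_K$. By Lemma \ref{lem:020} the closed $G$-span of $\psi$ is an orthogonal sum of finitely many irreducible subrepresentations, and an irreducible $(\frakg,K)$-submodule of such a finite sum projects to zero onto the summands not isomorphic to it; therefore $\psi$ lies in the sum of those summands isomorphic to $\pi$, i.e.\ $\psi\in P_K$. The displayed intertwining identities also show $W$ is a $(\frakg,K)$-submodule of $P_K$.

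\textbf{Step 2: $P_K\subseteq W$.} Here $m_\Gamma(\pi)<\infty$ enters: then $P\cong\pi^{\oplus m_\Gamma(\pi)}$ is admissible, so $P_K$ is a semisimple $(\frakg,K)$-module of finite length whose only simple constituent type is $\pi_K$, and every submodule of $P_K$ is orthogonally complemented in it. Write $P_K=W\oplus W'$ and suppose $W'\ne 0$. Then $W'$ contains a simple submodule $M'\cong\pi_K$ whose $L^2$-closure is a copy of $\pi$; equivalently, there is a nonzero, scalar-isometric, $G$-equivariant embedding $U\colon H\to L^2(\Gamma\backslash G)$ with $U(H_K)=M'\subseteq W'$, so $U(H_K)\perp W$. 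Setting $e_U(h):=U(h)(1_G)$ gives a $\Gamma$-invariant linear functional on $H_K$ (a distribution vector of $\pi$) with $U(h)(g)=e_U(\pi(g)h)$; $U$ is recovered from $e_U$, so $e_U\ne 0$. Now for $h_1,h_2,h\in H_K$, using that $c_{h_1,h_2}\in L^1(G)$ (integrability) and that $U(h)$ is a bounded function — square-integrable automorphic forms are bounded: immediate if $\Gamma\backslash G$ is compact, and otherwise a standard property of the discrete-series spectrum — one may unfold:
\[
	0=\scal{P_\Gamma c_{h_1,h_2}}{U(h)}_{L^2(\Gamma\backslash G)}=\int_G c_{h_1,h_2}(g)\,\overline{U(h)(g)}\,dg=\int_G\scal{\pi(g)h_1}{h_2}_H\,\overline{e_U(\pi(g)h)}\,dg.
\]
The Schur orthogonality relations for the square-integrable representation $\pi$ (with one slot the distribution vector attached to $e_U$) evaluate the last integral to a nonzero multiple of $\scal{h_1}{h}_H\,\overline{e_U(h_2)}$. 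Taking $h_1=h\ne 0$ forces $e_U\equiv 0$, a contradiction. Hence $W'=0$ and $P_K=W$.

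\textbf{Expected main obstacle.} Step 1 is bookkeeping with Lemmas \ref{lem:028} and \ref{lem:020}; the real work is Step 2, and within it the two analytic inputs that make the unfolding and the Schur computation legitimate: that $\pi$-isotypic vectors of $L^2(\Gamma\backslash G)$ are genuine bounded functions (so they pair with the $L^1$-matrix coefficients and the defining series can be rearranged), and that the Schur orthogonality relations persist when one entry is the automorphic distribution vector $e_U\in H^{-\infty}$; together with the admissibility supplied by $m_\Gamma(\pi)<\infty$, these are the load-bearing facts. (Running the same computation with $U$ ranging over an orthonormal basis $U_1,\dots,U_m$ of the $\pi$-multiplicity space of $L^2(\Gamma\backslash G)$, rather than over a hypothetical $W'$, yields the explicit identity $P_\Gamma c_{h_1,h_2}=d_\pi^{-1}\sum_j\overline{e_{U_j}(h_2)}\,U_j(h_1)$, from which both inclusions follow at once.)
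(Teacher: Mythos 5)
Your argument is correct in outline and rests on the same two pillars as the paper's proof: (a) the span $W$ of the Poincar\'e series $P_\Gamma c_{h,h'}$ is dense in $L^2(\Gamma\backslash G)_{[\pi]}$, and (b) the admissibility supplied by $m_\Gamma(\pi)<\infty$ upgrades a dense $(\frakg,K)$-submodule to the full space of $K$-finite vectors. The difference is that the paper outsources (a) entirely to Mili\v ci\'c's lemma \cite[Lemma 6.6]{muic19} and (b) to \cite[Theorem 0.4]{knapp_vogan95}, whereas your Step 2 reproves (a) from scratch by the unfolding-plus-Schur-orthogonality argument (which is in fact the content of the cited lemma) and replaces the closure bijection of Knapp--Vogan by semisimplicity of the finite-length module $P_K\cong\pi_K^{\oplus m}$. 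What your route buys is self-containedness and the explicit Petersson-type identity in your closing parenthetical; what it costs is that you must supply the analytic inputs the citation hides.

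Three of those inputs deserve more care than your sketch gives them. First, the boundedness of $U(h)$ for noncompact $\Gamma\backslash G$ is not a triviality (for the full discrete spectrum it needs either the rapid decay of cusp forms together with the fact that a tempered $\pi$ cannot occur in the residual spectrum, or a Sobolev-type estimate); but it is also unnecessary: the proof of Lemma \ref{lem:028} shows that $P_\Gamma\abs{c_{h_1,h_2}}$ is bounded and lies in $L^1(\Gamma\backslash G)$, hence in $L^2(\Gamma\backslash G)$, so Cauchy--Schwarz against $\abs{U(h)}\in L^2(\Gamma\backslash G)$ already yields the absolute convergence needed to unfold. Second, ``Schur orthogonality with a distribution vector in one slot'' is cleanest made rigorous by writing $\int_G\overline{c_{h_1,h_2}(g)}\,U(h)(g)\,dg=\big(U(\pi(\overline{c_{h_1,h_2}})h)\big)(1_G)$ and applying the ordinary orthogonality relations inside $H$ to get $\pi(\overline{c_{h_1,h_2}})h=d_\pi^{-1}\scal{h}{h_1}h_2$; this avoids any honest manipulation of $e_U$ inside an integral. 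Third, deducing $U=0$ from $e_U|_{H_K}=0$ requires knowing that evaluation at $1_G$ is continuous on smooth vectors of $L^2(\Gamma\backslash G)$ and that $H_K$ is dense in $H^\infty$ in the smooth topology; both are standard, but they are precisely where the claim ``$U$ is recovered from $e_U$'' lives. With these points filled in, your proof is a complete and valid alternative to the paper's citation-based one.
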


\begin{proof}
	By \cite[Lemmas 6.6 and 5.5(ii)(a) and Theorem 6.4(i)]{muic19}, the $ (\mathfrak g,K) $-submodule  
	\[ V_\pi:=\mathrm{span}_\C\left\{P_\Gamma c_{h,h'}:h,h'\in H_K\right\} \]
	of $ \left(L^2(\Gamma\backslash G)_{[\pi]}\right)_K $ is dense in $  L^2(\Gamma\backslash G)_{[\pi]} $. Thus, since the assumption that $ m_\Gamma(\pi)<\infty $ implies that $ L^2(\Gamma\backslash G)_{[\pi]} $ is an admissible representation of $ G $ (see \cite[Theorems 0.2 and 0.3]{knapp_vogan95}), it follows by  \cite[Theorem 0.4]{knapp_vogan95} that $ V_\pi=\left(L^2(\Gamma\backslash G)_{[\pi]}\right)_K $.
\end{proof}

\section{Harish-Chandra's realization of irreducible bounded symmetric domains}\label{sec:104}

From now until the end of Section \ref{sec:154}, we fix a simple, non-compact real Lie algebra $ \mathfrak g $ and its (up to conjugation by an inner automorphism of $ \mathfrak g $ unique) Cartan decomposition $ \mathfrak g=\mathfrak k\oplus\mathfrak p $ into a Lie subalgebra $ \mathfrak k $ and a subspace $ \mathfrak p $ such that $ \mathfrak k\oplus i\mathfrak p $ is a compact real form of $ \mathfrak g_\C $ (cf.\ \cite[Ch.\ III, \S7]{helgason78}). Moreover, we fix a maximal abelian subalgebra $ \mathfrak h $ of $ \mathfrak k $ and assume that $ \mathfrak h $ is also a maximal abelian subalgebra of $ \mathfrak g $. 
Thus, $ \mathfrak h_\C $ is a Cartan subalgebra of $ \mathfrak g_\C $, and the root system $ \Delta=\Delta(\mathfrak h_\C:\mathfrak g_\C) $ is a disjoint union of the set $ \Delta_K=\Delta(\mathfrak h_\C:\mathfrak k_\C) $ of compact roots and the set $ \Delta_n $ of non-compact roots (cf.\ \cite[\S3]{harish55_iv}). 

We fix a choice of the set $ \Delta^+ $ of positive roots in $ \Delta $, write $ \Delta_K^+=\Delta_K\cap\Delta^+ $ and $ \Delta_n^+=\Delta_n\cap\Delta^+ $, and denote by $ \mathfrak g_\alpha $ the root subspace corresponding to a root $ \alpha\in\Delta $. We suppose that every root $ \alpha\in\Delta_n^+ $ is totally positive, i.e., every root in $ \alpha+\mathrm{span}_\Z\Delta_K $ is positive (cf.\ \cite[Corollary of Lemma 12]{harish55_iv}). Thus, $ \mathfrak p_\C^+=\sum_{\alpha\in\Delta_n^+}\mathfrak g_\alpha $ and $ \mathfrak p_\C^-=\sum_{\alpha\in\Delta_n^+}\mathfrak g_{-\alpha} $ are abelian subspaces of $ \mathfrak g_\C $, and we have $ \left[\mathfrak k_\C,\mathfrak p_\C^+\right]\subseteq\mathfrak p_\C^+ $ and $ \left[\mathfrak k_\C,\mathfrak p_\C^-\right]\subseteq\mathfrak p_\C^- $ \cite[Lemmas 11 and 12]{harish55_iv}. 
Let
$ \mathfrak k_\C^+=\sum_{\alpha\in\Delta_K^+}\mathfrak g_\alpha $, $ \mathfrak k_\C^-=\sum_{\alpha\in\Delta_K^+}\mathfrak g_{-\alpha} $, $ \mathfrak n^+=\mathfrak k_\C^+\oplus\mathfrak p_\C^+ $, $ \mathfrak n^-=\mathfrak k_\C^-\oplus\mathfrak p_\C^- $, and $ \mathfrak b=\mathfrak k_\C\oplus\mathfrak p_\C^+ $.
Given $ \alpha\in\Delta $, let $ H_\alpha $ be the element of the one-dimensional subspace $ [\mathfrak g_\alpha,\mathfrak g_{-\alpha}]\subseteq \mathfrak h_\C $ such that $ \alpha(H_\alpha)=2 $. Let $ \delta=\frac12\sum_{\alpha\in\Delta^+}\alpha\in\mathfrak h_\C^* $.

Let $ G_\C $ be the simply connected complex Lie group with Lie algebra $ \mathfrak g_\C $, and let $ G $, $ K $, and $ T $ be, respectively, the connected (real) Lie subgroups of $ G_\C $ with Lie algebras $ \mathfrak g $, $ \mathfrak k $, and $ \mathfrak h $. The group $ G $ is a connected semisimple Lie group with finite center, and $ K $ is a maximal compact subgroup of $ G $ (cf.\ \cite[Ch.\,VI, Theorem 1.1(i)]{helgason78} and \cite[\S2]{harish56_v}). Let $ K_\C $, $ P_\C^+ $, and $ P_\C^- $ be, respectively, the connected complex Lie subgroups of $ G_\C $ with Lie algebras $ \mathfrak k_\C $, $ \mathfrak p_\C^+ $, and $ \mathfrak p_\C^- $. The group $ K_\C $ normalizes $ P_\C^+ $ and $ P_\C^- $, and the assignment $ (p^+,k,p^-)\mapsto p^+kp^- $ defines a biholomorphism from $ P_\C^+\times K_\C\times P_\C^- $ onto the open subset $ P_\C^+K_\C P_\C^- $ of $ G_\C $ (cf.\ \cite[Lemma 4]{harish56_v} or \cite[Lemma 2]{bruhat58}). In particular, given $ g\in P_\C^+K_\C P_\C^- $, we have 
	\begin{equation}\label{eq:200}
		 g=g_+g_0g_- 
	\end{equation}
for a unique choice of $ g_+\in P_\C^+ $, $ g_0\in K_\C $, and $ g_-\in P_\C^- $.

The set $ GK_\C P_\C^- $ is an open subset of $ P_\C^+K_\C P_\C^- $, and since $ G\cap K_\C P_\C^-=K $, we can identify the quotient manifold $ GK_\C P_\C^-/K_\C P_\C^-$ canonically with $ G/K $, thus endowing the smooth manifold $ G/K $ with a complex structure with respect to which the canonical left action of $ G $ on $ G/K $ is by byholomorphisms (cf.\ \cite[Lemma 3 and \S2]{bruhat58}). The exponential map $ \exp:\mathfrak p_\C^+\to P_\C^+ $ is biholomorphic, and the assignment
\[ gK\equiv gK_\C P_\C^-\mapsto\exp^{-1}(g_+),\quad\  g\in G, \]
defines a biholomorphism $ \zeta $ from $ G/K $ onto a bounded symmetric domain $ \mathcal D\subseteq\mathfrak p_\C^+ $ (cf.\ \cite[Theorem 1]{bruhat58}).
The canonical left action of $ G $ on $ G/K $ corresponds via $ \zeta $ to the following left action of $ G $ on $ \mathcal D $:
\[ g.x=\exp^{-1}\left(\left(g\exp x \right)_+\right),\quad\  g\in G,\ x\in \mathcal D. \]
Given $ x\in \mathcal D $, let $ \textbf{\textit x}=\zeta^{-1}(x) $. Thus, $ \textbf{\textit x}  $ is the unique element of $ G/K $ such that $ \textbf{\textit x}.0=x $. Since $ K $ normalizes $ P_\C^- $, we can use \eqref{eq:200} to define $ \textbf{\textit x}_0\in K_\C/K $.

Let us fix an (up to a positive multiplicative constant unique) non-zero $ G $-invariant Radon measure $ \mathsf v $ on $ \mathcal D $. Applying \cite[Theorem 8.36]{knapp02}, we specify a Haar measure $ dg $ on $ G $ by the condition
	\[  \int_G\varphi(g)\,dg=\int_{\mathcal D}\int_K\varphi(\textbf{\textit x}k)\,dk\,d\mathsf v(x),\quad\  \varphi\in C_c(G),  \]
where $ dk $ is the Haar measure on $ K $ such that $ \int_Kdk=1 $.

\section{Holomorphic and anti-holomorphic discrete series of $ G $}

Let $ (\pi,H) $ be a unitary representation of $ G $ (resp., $ K $). A linear functional $ \lambda\in\mathfrak h_\C^* $ is a weight of $ \pi $ if the subspace
\[ H_{\lambda}=\left\{h\in H_K:\pi(X)h=\lambda(X)h\text{ for all }X\in\mathfrak h_\C\right\} \]
of $ H_K $ is non-trivial. If $ \pi $ is irreducible, then there exists at most one weight $ \lambda $ of $ \pi $ such that $ \pi(\mathfrak n^+)v=0 $ (resp., $ \pi(\mathfrak k_\C^+)v=0 $) for some $ v\in H_\lambda\setminus\left\{0\right\} $ (see \cite[\S20.2]{humphreys72}), in which case $ \pi $ is said to be of highest weight $ \lambda $ (with respect to the positive system $ \Delta^+ $ (resp., $ \Delta_K^+ $)), and the non-zero elements of the (necessarily one-dimensional) subspace $ H_\lambda $ are called the highest weight vectors of $ \pi $.
Irreducible unitary representations of $ G $ (resp., $ K $) of lowest weight $ \lambda $ and their lowest weight vectors are defined by replacing $ \mathfrak n^+ $ (resp., $ \mathfrak k_\C^+ $) in the above definition by $ \mathfrak n^- $ (resp., $ \mathfrak k_\C^- $).

From now until the end of Section \ref{sec:154}, we fix $ \Lambda\in\mathfrak h_\C^* $ with the following properties:
\begin{enumerate}[label={($ \Lambda $\arabic*)}]
	\item\label{enum:002:1} $ \Lambda $ is analytically integral, i.e., $ \Lambda\left(\left\{X\in\mathfrak h:\exp(X)=1_T\right\}\right)\subseteq 2\pi i \Z $.
	\item\label{enum:002:2} $ \Lambda(H_\alpha)\geq0 $ for all $ \alpha\in\Delta_K^+ $.
	\item\label{enum:002:3} $ (\Lambda+\delta)(H_\alpha)<0 $ for all $ \alpha\in\Delta_n^+ $.
\end{enumerate}

\begin{lemma}\label{lem:022}
	There exist up to equivalence unique irreducible unitary representations $ (\rho,V) $ of $ K $ and $ (\pi_\rho,\calH_\rho) $ of $ G $ that are of highest weight $ \Lambda $. The representations $ (\rho,V) $ and $ (\pi_\rho,\calH_\rho) $ and their contragredient representations $ (\rho^*,V^*) $ and $ \left(\pi_\rho^*,\calH_\rho^*\right) $ have the following properties:
	\begin{enumerate}[label={(\roman*)}]
		\item\label{lem:022:1} The representation $ \left(\pi_\rho^*,\calH_\rho^*\right) $ (resp., $ (\rho^*,V^*) $) is the up to equivalence unique irreducible unitary representation of $ G $ (resp., of $ K $) of lowest weight $ -\Lambda $, and we have
		\begin{equation}\label{eq:102}
			\left(\calH_\rho^*\right)_{-\Lambda}=\left\{h^*:h\in\left(\calH_\rho\right)_\Lambda\right\}\quad\ \text{and}\quad\ \left(V^*\right)_{-\Lambda}=\left\{v^*:v\in V_\Lambda\right\}.
		\end{equation}
		\item\label{lem:022:2a} The $ (\mathfrak g,K) $-module $ \left(\calH_\rho\right)_K $ (resp., $ \left(\calH_\rho^*\right)_K $) decomposes into a direct sum of its weight subspaces.
		\item\label{lem:022:2}  As a left $ \mathcal U(\mathfrak g) $-module, $ \left(\calH_\rho\right)_K $ is isomorphic to the generalized Verma module
		\begin{equation}\label{eq:105}
			\mathcal U(\mathfrak g)\otimes_{\mathcal U(\mathfrak b)}V,
		\end{equation}
		where $ V $ is regarded as the left $ \mathcal U(\mathfrak b) $-module obtained from the $ \mathcal U(\mathfrak k) $-module $ (\rho,V) $ by letting $ \mathfrak p_\C^+ $ act trivially.
		\item\label{lem:022:3} In terms of \cite[Theorem 9.20]{knapp86}, $ \pi_\rho $ (resp., $ \pi_\rho^* $) is a discrete series representation of $ G $ with Harish-Chandra parameter $ \Lambda+\delta $ (resp., $ -\Lambda-\delta $) and Blattner parameter $ \Lambda $ (resp., $ -\Lambda $). 
		\item\label{lem:022:4} The $ K $-type $ \rho $ (resp., $ \rho^* $) occurs with multiplicity one in $ \pi_\rho $ (resp., $ \pi_\rho^* $), and we have 
		\begin{equation}\label{eq:041}
			\pi_\rho(\mathfrak p_\C^+)\,(\calH_\rho)_{[\rho]}=0\qquad\text{and}\qquad \pi_\rho^*(\mathfrak p_\C^-)\,(\calH_\rho^*)_{[\rho^*]}=0.
		\end{equation}
		\item\label{lem:022:5} We have the following equalities of one-dimensional complex vector spaces:
		\begin{enumerate}[label={(vi.\roman*)}]
			\item\label{lem:022:5:1} $ \left(\calH_\rho\right)_\Lambda=\left(\left(\calH_\rho\right)_{[\rho]}\right)_\Lambda=\big((\calH_\rho)_{[\rho]}\big)^{\mathfrak k_\C^+}=\big((\calH_\rho)_K\big)^{\mathfrak n^+}  $.
			\item\label{lem:022:5:2} $ \left(\calH_\rho^*\right)_{-\Lambda}=\left(\left(\calH_\rho^*\right)_{[\rho^*]}\right)_{-\Lambda}=\left((\calH_\rho^*)_{[\rho^*]}\right)^{\mathfrak k_\C^-}=\left((\calH_\rho^*)_K\right)^{\mathfrak n^-}. $
		\end{enumerate}
		\item\label{lem:022:6} The representation $ \pi_\rho $ (resp., $ \pi_\rho^* $) is integrable if and only if we have
			\begin{equation}\label{eq:110}
				(\Lambda+\delta)(H_\alpha)<-\frac14\sum_{\beta\in\Delta}\abs{\beta(H_\alpha)},\quad\  \alpha\in\Delta_n^+.
			\end{equation}
	\end{enumerate}
\end{lemma}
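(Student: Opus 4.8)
The plan is to derive the whole lemma from two facts already available in the literature — the theorem of the highest weight for the compact group $K$, and Harish-Chandra's construction of the holomorphic discrete series \cite{harish55_iv, harish56_v} (as packaged in \cite[Theorem 9.20]{knapp86}) — the only real work being to match normalizations, since our fixed positive system $\Delta^+$ makes $\Lambda+\delta$ \emph{anti}dominant on $\Delta_n^+$ by \ref{enum:002:3}. I would first dispose of the $K$-picture: as $K$ is compact and connected with maximal torus $\exp\frakh$, conditions \ref{enum:002:1} and \ref{enum:002:2} say exactly that $\Lambda$ is a $\Delta_K^+$-dominant analytically integral weight, so there is an, up to equivalence unique, irreducible (hence unitary) representation $(\rho,V)$ of $K$ of highest weight $\Lambda$, and it extends holomorphically to $K_\C$ because $K_\C$ is the complexification of $K$. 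The conjugate-linear bijection $v\mapsto v^*\colon V\to V^*$ satisfies $d\rho^*(X)v^*=(d\rho(\tau X)v)^*$ for $X\in\frakk_\C$, where $\tau$ is the conjugation of $\frakg_\C$ with respect to $\frakg$; since $\tau(\frakg_\alpha)=\frakg_{-\alpha}$ for every root $\alpha$, this bijection carries $\mu$-weight vectors to $(-\mu)$-weight vectors and $\frakk_\C^+$-highest weight vectors to $\frakk_\C^-$-lowest weight vectors. That settles the $V$-parts of \ref{lem:022:1}, including the uniqueness of $\rho^*$.

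Next I would set $\lambda:=\Lambda+\delta$ and check it against \cite[Theorem 9.20]{knapp86}: by \ref{enum:002:2} (together with $\delta_K=\delta-\delta_n$ being strictly $\Delta_K^+$-dominant and $\delta_n=\tfrac12\sum_{\beta\in\Delta_n^+}\beta$ being invariant under the Weyl group of $\Delta_K$, which is where total positivity enters) we get $\lambda(H_\alpha)>0$ for $\alpha\in\Delta_K^+$, and by \ref{enum:002:3} $\lambda(H_\alpha)<0$ for $\alpha\in\Delta_n^+$; hence $\lambda$ is regular, with its own positive system $\Delta_\lambda^+=\Delta_K^+\cup(-\Delta_n^+)$, and \ref{enum:002:1} supplies the requisite analytic integrality. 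This yields the discrete series $(\pi_\rho,\calH_\rho)$ with Harish-Chandra parameter $\lambda=\Lambda+\delta$; since $\tfrac12\sum_{\beta\in\Delta_\lambda^+}\beta=\delta_K-\delta_n$, the Blattner formula gives Blattner parameter $\lambda-\delta=\Lambda$, which is \ref{lem:022:3}, and passing to contragredients gives the stated parameters $-(\Lambda+\delta)$ and $-\Lambda$ for $\pi_\rho^*$. Condition \ref{enum:002:3} places $\Lambda$ well inside the range in which the parabolic Verma module \eqref{eq:105} — for the parabolic $\frakb=\frakk_\C\oplus\frakp_\C^+$, whose nilradical $\frakp_\C^+$ is abelian — is both irreducible and unitarizable, so $(\calH_\rho)_K$ is isomorphic to it; this is \ref{lem:022:2}, and uniqueness of an irreducible unitary $G$-representation of highest weight $\Lambda$ falls out, since its Harish-Chandra module must be a quotient of the already irreducible \eqref{eq:105}.

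The remaining assertions are then formal, once \eqref{eq:105} is in hand. By the Poincar\'e--Birkhoff--Witt theorem and the abelianness of $\frakp_\C^-$, one has $(\calH_\rho)_K\cong\bigoplus_{n\ge0}S^n(\frakp_\C^-)\otimes_\C V$ as $\frakk_\C$-modules, which visibly decomposes into $\frakh_\C$-weight spaces, and dually for $(\calH_\rho^*)_K$; this is \ref{lem:022:2a}. For $n\ge1$ every weight of $S^n(\frakp_\C^-)\otimes V$ is of the form $\mu-(\alpha_1+\dots+\alpha_n)$ with $\mu$ a weight of $V$ and $\alpha_i\in\Delta_n^+$, so $\Lambda$ minus it equals $(\Lambda-\mu)+(\alpha_1+\dots+\alpha_n)$, a nonzero sum of a $\Z_{\ge0}$-combination of compact simple roots with $n$ non-compact positive roots — nonzero because its coefficient on the unique non-compact simple root is $n$ — whence $\Lambda$ is not a weight of $S^n(\frakp_\C^-)\otimes V$. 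Thus $\rho$ occurs exactly once, namely in $1\otimes V=(\calH_\rho)_{[\rho]}$, on which $\frakp_\C^+\subseteq\frakb$ acts by zero by the very construction of \eqref{eq:105}; applying $\tau$, which swaps $\frakp_\C^+$ and $\frakp_\C^-$, gives the companion identity in \eqref{eq:041}, so \ref{lem:022:4} holds. The same weight computation shows $(\calH_\rho)_\Lambda\subseteq 1\otimes V$ equals the highest weight line $V_\Lambda$; that line is also the space of $\frakk_\C^+$-invariants of $(\calH_\rho)_{[\rho]}$ and the space of $\frakn^+$-invariants of the irreducible $\frakg_\C$-module $(\calH_\rho)_K$, giving the chain \ref{lem:022:5:1} (and \ref{lem:022:5:2} is its image under $\tau$), while \eqref{eq:102} and the $\calH_\rho$-part of \ref{lem:022:1} follow exactly as in the first paragraph. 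Finally, \ref{lem:022:6} is the standard criterion for a discrete series representation to have $L^1$ matrix coefficients; in the holomorphic case the $K$-finite matrix coefficients of $\pi_\rho$ are explicit (Proposition \ref{prop:009}), so this reduces to a direct convergence estimate that holds precisely when \eqref{eq:110} does — compare \cite{baily_borel66, muic19} and the references therein.

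I expect the only genuine obstacles to be, first, the convention bookkeeping of the second paragraph — keeping straight that $\Delta^+$ and $\Delta_\lambda^+$ differ by flipping $\Delta_n^+$, and tracking this consistently through the Blattner formula and the passage to contragredients — and, second, pinning down the exact form of the integrability criterion \ref{lem:022:6} from the literature; both the unitarizability of \eqref{eq:105} in the range \ref{enum:002:3} and that integrability criterion are real analytic input that I would cite (from \cite{harish55_iv, harish56_v, knapp86, baily_borel66, muic19}) rather than reprove, so the proof proper is essentially an exercise in translating known statements into the present notation.
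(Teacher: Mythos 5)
Your proposal is correct and follows essentially the same route as the paper: the theorem of the highest weight for $K$, Harish-Chandra's construction of the holomorphic discrete series as packaged in \cite[Theorem 9.20]{knapp86}, the generalized Verma module realization \eqref{eq:105}, duality via the conjugation of $\frakg_\C$ with respect to $\frakg$, and the Hecht--Schmid criterion for \ref{lem:022:6}; the only real divergence is that you obtain the multiplicity-one claim in \ref{lem:022:4} by a direct Poincar\'e--Birkhoff--Witt/weight count in $S(\frakp_\C^-)\otimes_\C V$ (using that every non-compact positive root has coefficient one on the non-compact simple root), where the paper instead invokes the Blattner formula via \cite[Theorem 9.20(b)]{knapp86} and \cite{atiyah_schmid77} --- both are valid. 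One caveat: drop the aside suggesting that \ref{lem:022:6} could be verified by a convergence estimate on the explicit matrix coefficients of Proposition \ref{prop:009}, since that proposition relies on the present lemma (multiplicity one enters its proof through Schur's lemma) and the argument would be circular; citing the Hecht--Schmid integrability criterion in terms of the Harish-Chandra parameter, as you also propose, is exactly what the paper does.
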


\begin{proof}
	The existence and uniqueness of the representation $ (\rho,V) $ follow from \ref{enum:002:1} and \ref{enum:002:2} by \cite[Theorems 4.28 and 1.15]{knapp86}. The representation $ \pi_\rho $ was constructed and investigated by Harish-Chandra in \cite{harish55_iv}, \cite{harish56_v}, and \cite{harish56_vi}. Its uniqueness follows from \cite[Theorem 2]{harish55_iv} and \cite[Theorem 8]{harish53}. 
	
	Let $ \eta_{\mathfrak g}:\mathfrak g_\C\to\mathfrak g_\C $ be the conjugation of $ \mathfrak g_\C $ with respect to $ \mathfrak g $, given by 
	\[ \eta_{\mathfrak g}(X+iY)=X-iY,\quad\  X,Y\in\mathfrak g.  \]
	Since $ \Lambda(\mathfrak h)\subseteq i\mathbb R $ and $ \eta_{\mathfrak g}(\mathfrak g_\alpha)=\mathfrak g_{-\alpha} $ for all $ \alpha\in\Delta $ \cite[p.\,757]{harish55_iv}, the elementary equality
	\begin{equation}\label{eq:100}
		\pi_\rho^*(X)h^*=\big(\pi_\rho(\eta_{\mathfrak g}(X))h\big)^*,\quad\  X\in\mathfrak g_\C,\ h\in\left(\calH_\rho\right)_K,
	\end{equation}
	implies that if $ h $ is the highest weight vector of $ \pi_\rho $, then the linear functional $ h^* $ is the lowest weight vector, of weight $ -\Lambda $, of $ \pi_\rho^* $. This proves the claim on $ \pi_\rho^* $ in \ref{lem:022:1}; the claim on $ \rho^* $ is proved analogously.
	
	The claim \ref{lem:022:2a} follows from \ref{lem:022:1} by \cite[\S20.2, Theorem (a)]{humphreys72}. The claims \ref{lem:022:2} and \ref{lem:022:3} are well known (cf.\ \cite{repka79} and \cite[Remark (1) after Theorem 9.20]{knapp86}). By \cite[Theorem 9.20(b)]{knapp86} and \cite[Theorem (8.5)]{atiyah_schmid77}, \ref{lem:022:3} implies the multiplicity one claim in \ref{lem:022:4}. The first equality in \ref{lem:022:5:1} follows immediately since $ \dim_\C(\calH_\rho)_\Lambda=1 $ by \cite[\S20.2, Theorem (c)]{humphreys72}. The remaining equalities in \ref{lem:022:5:1} follow from \cite[\S20.2, Corollary]{humphreys72}, and \ref{lem:022:5:2} follows by duality. The first equality in \eqref{eq:041} is a restatement of the fact that in \eqref{eq:105}, $ \mathfrak p_\C^+ $ annihilates $\C\otimes V $. The second equality in \eqref{eq:041} follows by duality as in the proof of \ref{lem:022:1}. 
	
	Finally, \ref{lem:022:6} follows from \ref{lem:022:3} and \ref{enum:002:3} by applying the criterion \cite{hecht_schmid76}, or equivalently \cite[Part III, \S3, Cor.\ of Theorem 1]{milicic77}, for the integrability of a discrete series representation in terms of its Harish-Chandra parameter.
\end{proof}

The representations $ \pi_\rho $ (resp., $ \pi_\rho^* $) of Lemma \ref{lem:022} constitute the holomorphic (resp., anti-holomorphic) discrete series of $ G $.
From now until the end of Section \ref{sec:154}, we fix a representation $ (\rho,V) $ of $ K $ as in Lemma \ref{lem:022} and a highest weight vector $ v_\Lambda $ of $ \rho $ such that $ \norm{v_\Lambda}_V=1 $. The unique extension of $ \rho $ to an irreducible holomorphic representation of $ K_\C $ will also be denoted by $ \rho $. 

Following \cite{bruhat58}, we will work with the realization of the representation $ (\pi_\rho,\calH_\rho) $ in which $ \calH_\rho $ is the Hilbert space of holomorphic functions $ \varphi:P_\C^-K_\C G\to V $ such that
\begin{equation}\label{eq:003}
	\varphi\left(p^-kw\right)=\rho(k)\varphi(w),\quad\ p^-\in P_\C^-,\ k\in K_\C,\ w\in P_\C^-K_\C G, 
\end{equation}
and
\begin{equation}\label{eq:107}
	\int_G\norm{\varphi(g)}^2_{V}\,dg<\infty;
\end{equation}
the norm on $ \calH_\rho $ is given by the square root of \eqref{eq:107}, and the action of $ G $ is given by
\[ \pi_\rho(g)\varphi=\varphi\left(\spacedcdot g\right),\quad\ g\in G,\ \varphi\in\calH_\rho. \]

Restricting the functions in $ \calH_\rho $ to $ G $ yields a unitary equivalence $ \Psi_\rho $ of $ \left(\pi_\rho,\calH_\rho\right) $ with an irreducible closed $ G $-invariant subspace $ \mathcal R_\rho $ of the right regular representation $ \left(R,L^2(G,V)\right) $. By \eqref{eq:003}, for every $ F\in\calR_\rho $, we have
\begin{equation}\label{eq:013}
	F(kg)=\rho(k)F(g),\quad\  k\in K,\ g\in G.
\end{equation}

Given $ \varphi\in\calH_\rho $, we define a function $ \widetilde\varphi:\mathcal D\to V $,
\[ \widetilde\varphi(x)=\varphi\left((\exp x)^{-1}\right). \]
By a computation analogous to that in \cite[\S4]{repka79}, one shows that the assignment $ \varphi\mapsto \widetilde\varphi $ defines a unitary $ G $-equivalence $ \widetilde\Psi_\rho $ of $ \pi_\rho $ with a representation $ \widetilde\pi_\rho $ of $ G $ on the Hilbert space $ \widetilde\calH_\rho $ of holomorphic functions $ f:\mathcal D\to V $ such that
\begin{equation}\label{eq:108}
	\int_{\mathcal D}\norm{\rho(\textbf{\textit x}_{0})^{-1}f(x)}_V^2\,d\mathsf v(x)<\infty;
\end{equation}
the norm on $ \widetilde\calH_\rho $ is given by the square root of \eqref{eq:108}, and the action of $ G $ is given by
\[ \widetilde\pi_\rho(g)f=f\big|_\rho g^{-1},\quad\  g\in G,\ f\in\widetilde\calH_\rho, \]
where $ \big|_\rho $ denotes the right action of $ G $ on the space $ V^{\mathcal D} $ of functions $ \mathcal D\to V $ given by
\begin{equation}\label{eq:030}
	\left(f\big|_\rho g\right)(x)=\rho\big((g\exp x)_0\big)^{-1}f(g.x),\quad\  f\in V^{\mathcal D},\ g\in G,\ x\in\mathcal D.
\end{equation}
 
The $ (\mathfrak g,K) $-module $ \big(\widetilde\calH_\rho\big)_K $ is the space $ \mathcal P(\mathcal D,V) $ of polynomial functions from the bounded domain $ \mathcal D\subseteq\mathfrak p_\C^+ $ to $ V $ (cf.\ \cite[Remarque 1]{bruhat58}). Let $ \mathcal P(\mathcal D) $ denote the space of polynomial functions $ \mathcal D\to\C $. The assignment 
\[ \mu\otimes v\mapsto f_{\mu\otimes v}:=\mu(\spacedcdot)v \]
defines a linear isomorphism $ \mathcal P(\mathcal D)\otimes_\C V\to\mathcal P(\mathcal D,V) $. The assignment $ v\mapsto f_{1\otimes v} $ defines a $ K $-equivalence of $ (\rho,V) $ with the space $ \big(\widetilde\calH_\rho\big)_{[\rho]} $ of constant functions $ \mathcal D\to V $.

Given $ f:\mathcal D\to V $, we define a function $ F_f:G\to V $,
\begin{equation}\label{eq:032}
	F_f(g)=\left(f\big|_\rho g\right)(0)=\rho(g_0)^{-1}f(g.0).
\end{equation}
We will write 
\[ F_{\mu\otimes v}=F_{f_{\mu\otimes v}}\qquad\text{and}\qquad F_v=F_{1\otimes v} \]
for $ \mu\in\mathcal P(\mathcal D) $ and $ v\in V $. The following lemma is elementary.
\begin{lemma}\label{lem:036}
	The assignment $ f\mapsto F_f $ defines a linear isomorphism from $ V^{\mathcal D} $ to the space of functions $ F:G\to V $ such that
	\begin{equation}\label{eq:017}
		F(gk)=\rho(k)^{-1}F(g),\quad\  g\in G,\ k\in K. 
	\end{equation}
	In particular, for every $ f\in V^{\mathcal D} $, we have
	\begin{equation}\label{eq:125}
		\norm{F_f(gk)}_V=\norm{F_f(g)}_V,\quad\  g\in G,\ k\in K. 
	\end{equation}
\end{lemma}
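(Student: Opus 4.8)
The plan is to check directly that $f\mapsto F_f$ is a well-defined linear bijection onto the space of functions $F:G\to V$ satisfying \eqref{eq:017}, the only nontrivial input being the behaviour of the factorization \eqref{eq:200} under right translation by $K$. Concretely, the first step is to show that $(gk)_+=g_+$ and $(gk)_0=g_0k$ for all $g\in G$ and $k\in K$. Since $K\subseteq K_\C$ and $K_\C$ normalizes $P_\C^-$, we have $g_-k=k\,(k^{-1}g_-k)$ with $k^{-1}g_-k\in P_\C^-$, so that
\[ gk=g_+g_0g_-k=g_+\,(g_0k)\,(k^{-1}g_-k), \]
and the uniqueness in \eqref{eq:200} gives the claim. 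In particular $(gk).0=\exp^{-1}\big((gk)_+\big)=\exp^{-1}(g_+)=g.0$, so \eqref{eq:032} yields
\[ F_f(gk)=\rho\big((gk)_0\big)^{-1}f\big((gk).0\big)=\rho(k)^{-1}\rho(g_0)^{-1}f(g.0)=\rho(k)^{-1}F_f(g), \]
which is \eqref{eq:017}; since $\rho(k)$ is unitary, this also gives \eqref{eq:125}.

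Linearity of $f\mapsto F_f$ is immediate from \eqref{eq:032}. For injectivity, I would use that $G$ acts transitively on $\mathcal D$ — this is exactly the identification $\mathcal D\equiv G/K$ — so that $F_f\equiv 0$ forces $f(g.0)=0$ for every $g\in G$, i.e.\ $f=0$. For surjectivity, given $F:G\to V$ satisfying \eqref{eq:017}, I would define $f:\mathcal D\to V$ by choosing, for each $x\in\mathcal D$, some $g\in G$ with $g.0=x$ and setting $f(x)=\rho(g_0)F(g)$. Any two admissible choices of $g$ differ by right multiplication by an element $k$ of the stabilizer of $0\in\mathcal D$, which equals $K$ because $G\cap K_\C P_\C^-=K$; and the first step together with \eqref{eq:017} gives
\[ \rho\big((gk)_0\big)F(gk)=\rho(g_0k)\,\rho(k)^{-1}F(g)=\rho(g_0)F(g), \]
so $f$ is well defined. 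Finally $F_f(g)=\rho(g_0)^{-1}f(g.0)=F(g)$ for all $g\in G$, so $f\mapsto F_f$ is surjective, and the construction above is its inverse.

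There is no real obstacle here: the only point that needs genuine care is the factorization identity $(gk)_0=g_0k$ for $k\in K$, which rests on $K_\C$ normalizing $P_\C^-$, and once that is in hand the rest is routine bookkeeping — in keeping with the lemma being elementary.
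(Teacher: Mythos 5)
Your argument is correct and complete: the key identity $(gk)_+=g_+$, $(gk)_0=g_0k$ follows exactly as you say from $K\subseteq K_\C$, the fact that $K_\C$ normalizes $P_\C^-$, and the uniqueness in \eqref{eq:200}, and the bijectivity argument via the stabilizer $G\cap K_\C P_\C^-=K$ is the standard one. The paper omits the proof as elementary, and what you have written is precisely the routine verification it has in mind.
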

We note that the unitary $ G $-equivalence $ \Psi_\rho\circ\widetilde\Psi_\rho^{-1}:\widetilde\calH_\rho\to\calR_\rho $ is given by the assignment
\begin{equation}\label{eq:109}
	f\mapsto \check F_f,
\end{equation}
where we use the notation \eqref{eq:026}.
In particular, we have
\begin{equation}\label{eq:043}
	(\calR_\rho)_K=\left\{\check F_f:f\in \mathcal P(\mathcal D,V)\right\},
\end{equation}
and a $ K $-equivalence from $ (\rho,V) $ to $ \left(\calR_\rho\right)_{[\rho]} $ is given by the assignment
\begin{equation}\label{eq:007}
	v\mapsto \check F_{v}.
\end{equation}
By Lemma \ref{lem:022}\ref{lem:022:5:1} and \eqref{eq:102}, it follows that 
\begin{equation}\label{eq:042}
	(\calR_\rho)_\Lambda=\C\check F_{v_\Lambda}\qquad\text{and}\qquad\big(\calR_\rho^*\big)_{-\Lambda}=\C\big(\check F_{v_\Lambda}\big)^*.
\end{equation}

\section{Determination of matrix coefficients}

Given $ F:G\to V $ and $ v\in V $, we define a function $ v^*F:G\to\C $, 
\begin{equation}\label{eq:024}
	 \left(v^*F\right)(g)=\scal{F(g)}v_V. 
\end{equation}

\begin{lemma}\label{lem:012}
	Let $ v\in V\setminus\left\{0\right\} $. Let $ F:G\to V $ be such that \eqref{eq:013} or \eqref{eq:017} holds. If $ v^*F=0 $, then $ F=0 $.
\end{lemma}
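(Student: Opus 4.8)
The statement asserts that if $F\colon G\to V$ satisfies the $K$-equivariance \eqref{eq:013} (left, via $\rho$) or \eqref{eq:017} (right, via $\rho^{-1}$), and the single scalar function $v^*F=\scal{F(\spacedcdot)}{v}_V$ vanishes identically, then $F\equiv 0$. The plan is to exploit the fact that $v\neq 0$ generates $V$ under the action of $K$, since $(\rho,V)$ is irreducible, and then use the equivariance of $F$ to transfer this to a statement about the values of $F$.

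First I would reduce to a single case, say \eqref{eq:013}; the case \eqref{eq:017} follows by the same argument applied to $g\mapsto F(g^{-1})$, or directly with the roles of left/right translation swapped. So assume $F(kg)=\rho(k)F(g)$ for all $k\in K$, $g\in G$, and suppose $\scal{F(g)}{v}_V=0$ for all $g\in G$. Fix $g\in G$. Using the equivariance, for every $k\in K$ we have
\[
0=\scal{F(kg)}{v}_V=\scal{\rho(k)F(g)}{v}_V=\scal{F(g)}{\rho(k)^{-1}v}_V=\scal{F(g)}{\rho(k^{-1})v}_V.
\]
Since $k$ ranges over all of $K$, this says $F(g)$ is orthogonal to the set $\{\rho(k)v:k\in K\}$, hence to its closed linear span. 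The key point is that $\mathrm{span}_\C\{\rho(k)v:k\in K\}$ is a non-zero $K$-invariant subspace of $V$ (it is non-zero because $v\neq 0$, taking $k=1_K$), and since $(\rho,V)$ is irreducible this span is all of $V$. Therefore $F(g)\perp V$, i.e.\ $F(g)=0$. As $g\in G$ was arbitrary, $F\equiv 0$.

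For the case \eqref{eq:017}, one argues identically: from $F(gk)=\rho(k)^{-1}F(g)$ one gets $0=\scal{F(gk)}{v}_V=\scal{\rho(k)^{-1}F(g)}{v}_V=\scal{F(g)}{\rho(k)v}_V$ for all $k\in K$, and again $F(g)$ is orthogonal to $\mathrm{span}_\C\{\rho(k)v:k\in K\}=V$, so $F(g)=0$. I do not expect any real obstacle here — the only substantive input is the irreducibility of $(\rho,V)$, which is part of the standing hypotheses from Lemma \ref{lem:022}; the rest is the elementary observation that a non-zero vector generates an irreducible representation together with the compatibility of the inner product with the unitary action $\rho$. One minor point to state carefully is that $V$ is finite-dimensional (as fixed in the setup), so "closed linear span" is just "linear span" and no completion subtleties arise.
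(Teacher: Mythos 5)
Your proof is correct and follows essentially the same route as the paper's: both use unitarity of $\rho$ together with the equivariance \eqref{eq:013} (or \eqref{eq:017}) to show that $F(g)$ is orthogonal to $\rho(K)v$, and then invoke $\mathrm{span}_\C\rho(K)v=V$, which holds by irreducibility. No gaps.
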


\begin{proof}
	Suppose that $ v^*F=0 $ and \eqref{eq:013} holds. Then, for every $ k\in K $, we have
	\[ \scal{F(\spacedcdot)}{\rho(k)v}_V=\scal{\rho(k)^{-1}F(\spacedcdot)}v_V\overset{\eqref{eq:013}}=\scal{F\big(k^{-1}\spacedcdot\big)}v_V=\left(v^*F\right)\big(k^{-1}\spacedcdot\big)=0. \]
	Since $ \mathrm{span}_\C\rho(K)v=V $, this implies that $ F=0 $. The proof in the case when $ F $ satisfies \eqref{eq:017} instead of \eqref{eq:013} is analogous.
\end{proof}

Let us define a positive constant
\[ C_\rho=\norm{\check F_{v_\Lambda}}_{\calR_\rho}^2\overset{\eqref{eq:109}}=\norm{f_{1\otimes v_\Lambda}}^2_{\widetilde\calH_\rho}=\int_{\mathcal D}\norm{\rho(\textbf{\textit x}_0)^{-1}v_\Lambda}_V^2\,d\mathsf v(x). \]
The following proposition gives a formula for some $ K $-finite matrix coefficients of the representations $ \left(\pi_\rho,\calH_\rho\right)\cong\calR_\rho $ and $ \left(\pi_\rho^*,\calH_\rho^*\right)\cong\calR_\rho^* $.

\begin{proposition}\label{prop:009}
	Let $ F\in\left(\calR_\rho\right)_K $ and $ v\in V $. Then, using the notation introduced in \eqref{eq:026} and \eqref{eq:024}, we have the following:
	\begin{enumerate}[label={(\roman*)}]
		\item\label{prop:009:1} The matrix coefficient $ c_{F,\check F_v} $ of $ \calR_\rho $ is given by
			\[ c_{F,\check F_v}=C_\rho\, v^*F. \]
		\item\label{prop:009:2} The matrix coefficient $ c_{(\check F_v)^*,F^*} $ of $ \calR_\rho^* $ is given by
			\[ c_{(\check F_v)^*,F^*}=C_\rho\, v^*\check F.  \]
	\end{enumerate}
\end{proposition}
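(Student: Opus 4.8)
The plan is to identify $\check F_v$, as an element of $(\calR_\rho)_{[\rho]}$, with an explicitly normalized reproducing section of $\calR_\rho$ for the evaluation functional at $1_G$, and to deduce both formulas from this. First I would check that the point evaluation $\calR_\rho\to V$, $F\mapsto F(1_G)$, is continuous: since $1_G.0=0$ and $(1_G)_0=1_{K_\C}$, for $f\in\widetilde\calH_\rho$ we have $\check F_f(1_G)=F_f(1_G)=f(0)$, so under the unitary isomorphism $f\mapsto\check F_f$ of $\widetilde\calH_\rho$ onto $\calR_\rho$ this evaluation corresponds to $f\mapsto f(0)$ on the Hilbert space $\widetilde\calH_\rho$ with norm \eqref{eq:108}; as $\textbf{\textit x}_0$ varies continuously in $x$, the norm \eqref{eq:108} dominates, up to a constant, the plain $L^2$-norm of $f$ over a small closed ball around $0$ in $\mathcal D$, and for holomorphic $V$-valued functions the latter dominates $\norm{f(0)}_V$. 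Hence, for each $v\in V$, the Riesz representation theorem furnishes a unique $E_v\in\calR_\rho$ with
\[ \scal{F}{E_v}_{\calR_\rho}=\scal{F(1_G)}{v}_V,\qquad F\in\calR_\rho. \]

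Next I would note that $v\mapsto E_v$ is complex-linear (Riesz duality and the second argument of $\scal{\spacedcdot}{\spacedcdot}_V$ are both conjugate-linear, so the two cancel) and intertwines $(\rho,V)$ with $(\pi_\rho|_K,\calR_\rho)$: using \eqref{eq:013} in the form $(\pi_\rho(k^{-1})F)(1_G)=F(k^{-1})=\rho(k^{-1})F(1_G)$ one gets $\scal{F}{\pi_\rho(k)E_v}_{\calR_\rho}=\scal{F(1_G)}{\rho(k)v}_V=\scal{F}{E_{\rho(k)v}}_{\calR_\rho}$, i.e. $\pi_\rho(k)E_v=E_{\rho(k)v}$, so the image of $v\mapsto E_v$ lies in $(\calR_\rho)_{[\rho]}$. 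This map is non-zero, since $\scal{\check F_{v_\Lambda}}{E_{v_\Lambda}}_{\calR_\rho}=\scal{\check F_{v_\Lambda}(1_G)}{v_\Lambda}_V=\scal{v_\Lambda}{v_\Lambda}_V=1$. Because $\rho$ occurs in $\calR_\rho$ with multiplicity one (Lemma \ref{lem:022}\ref{lem:022:4}), $\Hom_K(V,(\calR_\rho)_{[\rho]})$ is one-dimensional, so $v\mapsto E_v$ is a non-zero scalar multiple of the $K$-equivalence $v\mapsto\check F_v$ of \eqref{eq:007}; writing $\check F_v=c\,E_v$ and pairing the case $v=v_\Lambda$ with $\check F_{v_\Lambda}$ gives $1=\bar c\,\norm{\check F_{v_\Lambda}}_{\calR_\rho}^2=\bar c\,C_\rho$, hence $\check F_v=C_\rho E_v$ for all $v\in V$.

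Part \ref{prop:009:1} is then immediate: for $F\in(\calR_\rho)_K$ and $g\in G$, using $\check F_v=C_\rho E_v$, the defining property of $E_v$, and $(\pi_\rho(g)F)(1_G)=(R(g)F)(1_G)=F(g)$,
\[ c_{F,\check F_v}(g)=\scal{\pi_\rho(g)F}{\check F_v}_{\calR_\rho}=C_\rho\,\scal{(\pi_\rho(g)F)(1_G)}{v}_V=C_\rho\,\scal{F(g)}{v}_V=C_\rho\,(v^*F)(g). \]
For part \ref{prop:009:2} I would not repeat the argument but invoke the general relation between matrix coefficients of a unitary representation and its contragredient: from \eqref{eq:035} and the standard Hilbert structure $\scal{h_1^*}{h_2^*}_{\calR_\rho^*}=\scal{h_2}{h_1}_{\calR_\rho}$ one has
\[ c_{h_1^*,h_2^*}(g)=\scal{\pi_\rho^*(g)h_1^*}{h_2^*}_{\calR_\rho^*}=\scal{(\pi_\rho(g)h_1)^*}{h_2^*}_{\calR_\rho^*}=\scal{h_2}{\pi_\rho(g)h_1}_{\calR_\rho}=c_{h_2,h_1}(g^{-1}). \]
Taking $h_1=\check F_v$, $h_2=F$ and applying part \ref{prop:009:1} gives $c_{(\check F_v)^*,F^*}(g)=c_{F,\check F_v}(g^{-1})=C_\rho\scal{F(g^{-1})}{v}_V=C_\rho\scal{\check F(g)}{v}_V=C_\rho\,(v^*\check F)(g)$, which is the assertion.

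The crux is the identification $\check F_v=C_\rho E_v$: it is precisely the multiplicity-one property of the $K$-type $\rho$, together with the continuity of point evaluation, that pins $\check F_v$ down as the evaluation-at-$1_G$ vector up to the explicit constant $C_\rho$; everything after that is bookkeeping. One small point to watch is that in the display for part \ref{prop:009:1} the vector $\pi_\rho(g)F$ is in general only a smooth vector of $\calR_\rho$, not a $K$-finite one, so one genuinely needs the reproducing identity $\scal{F'}{\check F_v}_{\calR_\rho}=C_\rho\scal{F'(1_G)}{v}_V$ on all of $\calR_\rho$, which is why the continuity of point evaluation is established on the whole space rather than only on $(\calR_\rho)_K$.
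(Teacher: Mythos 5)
Your argument is correct and reaches the same two structural pivots as the paper --- multiplicity one of the $K$-type $\rho$ in $\calR_\rho$ (Lemma \ref{lem:022}\ref{lem:022:4}) plus Schur's lemma to pin down the second vector in the matrix coefficient, and the evaluation at $1_G$ with $F=\check F_{v_\Lambda}$, $v=v_\Lambda$ to fix the constant $C_\rho$ --- but it gets to the key pairing identity by a genuinely different route. The paper never discusses point evaluation: it invokes Harish-Chandra's theorem \cite[Theorem 1]{harish66} to write a $K$-finite, $\calZ(\frakg)$-finite $F$ as $\check\alpha*F$ with $\alpha\in C_c^\infty(G)$, which exhibits $v^*F$ directly as $c_{F,\mathrm{pr}_{\calR_\rho}(\overline\alpha v)}$ without ever producing an explicit reproducing vector; the $K$-equivariance and normalization arguments then run exactly as in your write-up. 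Your route instead establishes continuity of $F\mapsto F(1_G)$ on all of $\calR_\rho$ (via the sub-mean-value property of $\norm{f}_V^2$ for holomorphic $f$ and comparability of $\mathsf v$ and of $\rho(\textbf{\textit x}_0)^{\pm1}$ with constants on a compact neighbourhood of $0$ --- a standard but nontrivial verification that the paper's approach sidesteps) and then identifies $\check F_v$ with the Riesz vector $E_v$ up to the factor $C_\rho$. What this buys is a slightly stronger statement: your part \ref{prop:009:1} holds for every $F\in\calR_\rho$, not only $K$-finite $F$, precisely because the reproducing identity lives on the whole Hilbert space; the paper's convolution trick is confined to $K$-finite $\calZ(\frakg)$-finite vectors. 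Your observation that $\pi_\rho(g)F$ need not be $K$-finite, so the identity must hold on all of $\calR_\rho$, is well taken, although in the paper's version this issue does not arise because the pairing there is against a genuine element of $\calR_\rho$ from the start.

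One intermediate equation is off: from $\check F_{v_\Lambda}=c\,E_{v_\Lambda}$ and $\scal{\check F_{v_\Lambda}}{E_{v_\Lambda}}_{\calR_\rho}=1$ one gets $1=\overline{c}^{\,-1}\norm{\check F_{v_\Lambda}}_{\calR_\rho}^2=C_\rho/\bar c$, i.e.\ $\bar c=C_\rho$, not $1=\bar c\,C_\rho$ as you wrote. Taken literally, your displayed equation would give $\check F_v=C_\rho^{-1}E_v$ and hence the wrong constant in \ref{prop:009:1}; the conclusion you actually state, $\check F_v=C_\rho E_v$, is the correct one, so this is a transcription slip rather than a gap, but it should be fixed.
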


\begin{proof}
	We adapt the method of proof of \cite[Lemma 3-5]{muic10} and \cite[Proposition 5.3]{zunar23}.
	
	Let $ F\in\left(\calR_\rho\right)_K\setminus\left\{0\right\} $. Then, $ F $ is left $ K $-finite by \eqref{eq:013} and $ \mathcal Z(\mathfrak g) $-finite, hence by \cite[Theorem 1]{harish66} there exists a smooth function $ \alpha\in C_c(G) $ such that $ F=\check\alpha*F $. Thus, we have
	\[ \begin{aligned}
		\left(v^*F\right)(g)
		&=\scal{\int_G\check\alpha\big(h^{-1}\big)F(hg)\,dh}v_V\\
		&=\int_G\scal{\alpha(h)\,F(hg)}v_V\,dh\\
		&=\int_G\scal{F(hg)}{\overline{\alpha(h)}\,v}_V\,dh\\
		&=\scal{R(g)F}{\overline\alpha v}_{L^2(G,V)}\\
		&=\scal{R(g)F}{\mathrm{pr}_{\calR_\rho}(\overline\alpha v)}_{\calR_\rho},\quad\  g\in G,\ v\in V,
	\end{aligned} \]
	where $ \mathrm{pr}_{\calR_\rho} $ is the orthogonal projection $ L^2(G,V)\twoheadrightarrow\calR_\rho $. This shows that we have
	\begin{equation}\label{eq:004}
		v^*F=c_{F,\Phi_F(v)},\quad\  v\in V,
	\end{equation}
	for some function $ \Phi_F:V\to\calR_\rho $, which is easily shown to be a $ K $-equivariant linear operator: e.g., to prove its $ K $-equivariance, it suffices to note that for all $ k\in K $ and $ v\in V $, we have
	\[ \begin{aligned}
		c_{F,\Phi_F(\rho(k)v)}&\overset{\eqref{eq:004}}
		=\scal{F(\spacedcdot)}{\rho(k)v}_V
		=\scal{\rho(k)^{-1}F(\spacedcdot)}v_V
		\overset{\eqref{eq:013}}=\scal{F\left(k^{-1}\spacedcdot\right)}v_V\\
		&\overset{\phantom{\eqref{eq:004}}}=(v^*F)\left(k^{-1}\spacedcdot\right)
		\overset{\eqref{eq:004}}=c_{F,\Phi_F(v)}\left(k^{-1}\spacedcdot\right)
		=c_{F,R(k)\Phi_F(v)},
	\end{aligned} \]
	which implies that $ \Phi_F(\rho(k)v)=R(k)\Phi_F(v) $ by the definition of matrix coefficients and irreducibility of $ \calR_\rho $. 
	Since $ \rho $ occurs in $ \calR_\rho $ with multiplicity one (see Lemma \ref{lem:022}\ref{lem:022:4}), by Schur's lemma it follows that $ \Phi_F $ coincides up to a multiplicative constant with the $ K $-equivariant linear operator $ V\to\calR_\rho $ given by the assignment $ v\mapsto \check F_{v} $ (see \eqref{eq:007}). 
	
	Thus, there exists a function $ C:\left(\calR_\rho\right)_K\to\C $ such that
	\begin{equation}\label{eq:008}
		c_{F,\check F_v}=C(F)\,v^*F,\quad\  F\in\left(\calR_\rho\right)_K,\ v\in V. 
	\end{equation}
	Since for any fixed vector $ v\in V\setminus\left\{0\right\} $, the assignment
	\[ F\mapsto c_{F,\check F_v}=v^*\big(C(F)F\big)\mapsto C(F)F \]
	by Lemma \ref{lem:012} defines a linear operator on $ \left(\calR_\rho\right)_K $, the function $ C $ is necessarily constant on $ \left(\calR_\rho\right)_K\setminus\left\{0\right\} $. To determine its value, we evaluate \eqref{eq:008} at $ 1_G $ in the case when $ F=\check F_{v_\Lambda} $ and $ v=v_\Lambda $ and obtain
	\[ C(F)=\frac{c_{\check F_{v_\Lambda},\check F_{v_\Lambda}}(1_G)}{\left(v_\Lambda^*\check F_{v_\Lambda}\right)(1_G)}=\frac{\norm{\check F_{v_\Lambda}}_{\calR_\rho}^2}{\norm {v_\Lambda}_V^2}=\frac{C_\rho}1=C_\rho \]
	for all $ F\in \left(\calR_\rho\right)_K\setminus\left\{0\right\} $. Thus, again by \eqref{eq:008}, we have
		\[ c_{F,\check F_v}=C_\rho\, v^*F,\quad\  F\in\left(\calR_\rho\right)_K,\ v\in V.  \]
	This proves \ref{prop:009:1}, and \ref{prop:009:2} follows immediately by the elementary equality
	\[ c_{h^*,(h')^*}=\check c_{h',h}, \quad\  h,h'\in (\calR_\rho)_K.\qedhere \]
\end{proof}

\begin{corollary}\label{cor:124}
	Suppose that the representation $ \pi_\rho $ is integrable (see Lemma \ref{lem:022}\ref{lem:022:6}). Let $ f\in\mathcal P(\mathcal D,V) $. Then, $ F_f\in L^1(G,V) $.
\end{corollary}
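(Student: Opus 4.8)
The plan is to identify each scalar component $v^{*}F_{f}$, $v\in V$, with a $K$-finite matrix coefficient of the anti-holomorphic discrete series representation $\calR_{\rho}^{*}\cong\pi_{\rho}^{*}$, to deduce that $v^{*}F_{f}\in L^{1}(G)$, and then to recover the $V$-valued integrability of $F_{f}$ from the finite-dimensionality of $V$.

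First I would note that $\pi_{\rho}^{*}$ is integrable as well: by Lemma \ref{lem:022}\ref{lem:022:6} the integrability criterion \eqref{eq:110} is the same for $\pi_{\rho}$ and $\pi_{\rho}^{*}$, so the hypothesis on $\pi_{\rho}$ transfers. Next, since $f\in\mathcal P(\mathcal D,V)=\big(\widetilde\calH_{\rho}\big)_{K}$, \eqref{eq:043} gives $\check F_{f}\in(\calR_{\rho})_{K}$, and by \eqref{eq:007} we have $\check F_{v}\in(\calR_{\rho})_{[\rho]}\subseteq(\calR_{\rho})_{K}$ for every $v\in V$; hence $(\check F_{f})^{*}$ and $(\check F_{v})^{*}$ are $K$-finite vectors of $\calR_{\rho}^{*}$. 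Since $\check{(\check F_{f})}(g)=\check F_{f}(g^{-1})=F_{f}(g)$ by \eqref{eq:026}, applying Proposition \ref{prop:009}\ref{prop:009:2} with $F=\check F_{f}$ yields
\[ v^{*}F_{f}=C_{\rho}^{-1}\,c_{(\check F_{v})^{*},\,(\check F_{f})^{*}},\qquad v\in V. \]
Because $\calR_{\rho}^{*}\cong\pi_{\rho}^{*}$ is integrable and the right-hand side is a matrix coefficient of $K$-finite vectors, it belongs to $L^{1}(G)$ by the definition of integrability; thus $v^{*}F_{f}\in L^{1}(G)$ for every $v\in V$.

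To conclude, I would fix an orthonormal basis $e_{1},\dots,e_{n}$ of $V$, where $n=\dim_{\C}V$. The function $F_{f}$ is continuous (it is built from the holomorphic factorization \eqref{eq:200}, the action of $G$ on $\mathcal D$, $\rho$, and the polynomial $f$), hence measurable, and for every $g\in G$ the elementary inequality $\big(\sum_{i=1}^{n}\abs{a_{i}}^{2}\big)^{1/2}\le\sum_{i=1}^{n}\abs{a_{i}}$ gives $\norm{F_{f}(g)}_{V}\le\sum_{i=1}^{n}\abs{(e_{i}^{*}F_{f})(g)}$. Integrating,
\[ \int_{G}\norm{F_{f}(g)}_{V}\,dg\;\le\;\sum_{i=1}^{n}\norm{e_{i}^{*}F_{f}}_{L^{1}(G)}\;<\;\infty, \]
so $F_{f}\in L^{1}(G,V)$.

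I do not expect a genuine obstacle here: the work is the translation of $F_{f}$ into the matrix-coefficient language of Proposition \ref{prop:009} together with the use of the integrability of $\pi_{\rho}^{*}$. The two points requiring a little care are the $K$-finiteness of $\check F_{f}$ and $\check F_{v}$ as vectors of $\calR_{\rho}$ (so that the integrability bound applies to the relevant matrix coefficients) and the passage from estimates on the scalar components $v^{*}F_{f}$ to an estimate on $\norm{F_{f}}_{V}$, which is immediate since $\dim_{\C}V<\infty$. As a variant that avoids invoking the integrability of $\pi_{\rho}^{*}$, one may instead apply Proposition \ref{prop:009}\ref{prop:009:1} to $F=\check F_{f}$ to get $v^{*}\check F_{f}=C_{\rho}^{-1}c_{\check F_{f},\,\check F_{v}}\in L^{1}(G)$ from the integrability of $\pi_{\rho}$, and then use the unimodularity of $G$ and the identity $(v^{*}\check F_{f})(g)=(v^{*}F_{f})(g^{-1})$ to conclude $v^{*}F_{f}\in L^{1}(G)$.
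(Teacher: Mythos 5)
Your proposal is correct and follows essentially the same route as the paper: the paper's proof likewise invokes \eqref{eq:043} and Proposition \ref{prop:009}\ref{prop:009:2} to identify each $v^*F_f$ with a $K$-finite matrix coefficient of $\calR_\rho^*\cong\pi_\rho^*$, concludes $v^*F_f\in L^1(G)$ from integrability via Lemma \ref{lem:022}\ref{lem:022:6}, and then passes to $F_f\in L^1(G,V)$ using $\dim_\C V<\infty$. Your additional details (the simultaneous integrability of $\pi_\rho$ and $\pi_\rho^*$, the $K$-finiteness checks, and the orthonormal-basis estimate) merely make explicit what the paper leaves implicit.
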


\begin{proof}
	By \eqref{eq:043} and Proposition \ref{prop:009}\ref{prop:009:2}, for every $ v\in V $, $ v^*F_f $ is a $ K $-finite matrix coefficient of $ \mathcal R_\rho^*\cong(\pi_\rho^*,\calH_\rho^*) $ hence belongs to $ L^1(G) $ by Lemma \ref{lem:022}\ref{lem:022:6}. Thus, $ F_f\in L^1(G,V) $.
\end{proof}

\section{A space of holomorphic automorphic forms on $ \mathcal D $}\label{sec:102}

In this section, we suppose that the representation $ \pi_\rho $ is integrable (see Lemma \ref{lem:022}\ref{lem:022:6}). Let $ \Gamma $ be a discrete subgroup of finite covolume in $ G $ such that $ m_\Gamma(\pi_\rho)<\infty $. Let $ \calH_\rho^\infty(\Gamma) $ denote the space of holomorphic functions $ f:\mathcal D\to V $ with the following two properties:
\begin{enumerate}[label={(\arabic*)}]
	\item $ f\big|_\rho\gamma=f $ for all $ \gamma\in\Gamma $.
	\item $ \sup_{x\in \mathcal D}\norm{\rho(\textbf{\textit x}_0)^{-1}f(x)}_V<\infty $.
\end{enumerate}
The following theorem identifies $ \calH_\rho^\infty(\Gamma) $ with a finite-dimensional space of square-integrable automorphic forms on $ \Gamma\backslash G $. This result is well known to the experts, but we include a proof due to the lack of a suitable reference.

\begin{theorem}\label{thm:019}
	Suppose that the representation $ \pi_\rho $ is integrable (see Lemma \ref{lem:022}\ref{lem:022:6}).
	Let $ \Gamma $ be a discrete subgroup of finite covolume in $ G $ such that $ m_\Gamma(\pi_\rho)<\infty $. Then, the assignment $ f\mapsto v_\Lambda^*F_f $ defines a linear isomorphism
	\[ \Phi_{\rho,\Gamma}:\calH_\rho^\infty(\Gamma)\to \left(L^2(\Gamma\backslash G)_{[\pi_\rho^*]}\right)_{-\Lambda}.  \]
\end{theorem}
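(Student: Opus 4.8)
The plan is to check in turn that $\Phi_{\rho,\Gamma}$ is well defined, injective, and surjective; linearity is then clear. The one geometric fact I will lean on is that for holomorphic $f\colon\mathcal D\to V$ the function $F_f$ extends to a holomorphic $V$-valued function on $P_\C^+K_\C P_\C^-$ that is right $P_\C^-$-invariant and satisfies \eqref{eq:017}; since it is holomorphic there, $R(X)F_f(g)=\frac{d}{dt}\big|_0F_f(g\exp(tX))$ for $X\in\mathfrak g_\C$, and taking $X\in\mathfrak p_\C^-$ together with the right $P_\C^-$-invariance gives $R(\mathfrak p_\C^-)F_f=0$. Now fix $f\in\calH_\rho^\infty(\Gamma)$. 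Property (1) of $\calH_\rho^\infty(\Gamma)$ is equivalent to $F_f(\gamma g)=F_f(g)$ for $\gamma\in\Gamma$, so $\varphi:=v_\Lambda^*F_f$ descends to $\Gamma\backslash G$, and property (2), together with \eqref{eq:125} and the identity $\norm{F_f(g)}_V=\norm{\rho(\textbf{\textit x}_0)^{-1}f(x)}_V$ for $g.0=x$, shows that $F_f$ is bounded; hence $\varphi$ is a bounded smooth function on $\Gamma\backslash G$, and, $\Gamma$ having finite covolume, $\varphi\in L^2(\Gamma\backslash G)$.

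To see that $\varphi\in\big(L^2(\Gamma\backslash G)_{[\pi_\rho^*]}\big)_{-\Lambda}$, I would first observe that by \eqref{eq:017} the linear map $V^*\to C^\infty(\Gamma\backslash G)$, $v^*\mapsto v^*F_f$, intertwines $\rho^*$ with $R$, so its image is (a quotient of, hence, $\rho^*$ being irreducible, a copy of) $\rho^*$ unless $f=0$; since $v_\Lambda^*$ is the lowest weight vector of $\rho^*$, of weight $-\Lambda$, this forces $\varphi$ to have weight $-\Lambda$ and $R(\mathfrak k_\C^-)\varphi=0$, and with $R(\mathfrak p_\C^-)\varphi=0$ this gives $R(\mathfrak n^-)\varphi=0$. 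An $\mathfrak n^-$-annihilated $\mathfrak h_\C$-weight vector is $\mathcal Z(\mathfrak g)$-finite (the center acts on it by a scalar), so Lemma \ref{lem:020} applies: the smallest closed $G$-invariant subspace $W\subseteq L^2(\Gamma\backslash G)$ containing $\varphi$ is an orthogonal sum of finitely many irreducible subspaces $W_1,\dots,W_r$. Writing $\varphi=\sum_j\varphi_j$ with $\varphi_j\in W_j$ and using that the projections are $G$-equivariant, each nonzero $\varphi_j$ is again a weight $-\Lambda$ vector with $R(\mathfrak n^-)\varphi_j=0$ generating the irreducible $(\mathfrak g,K)$-module $(W_j)_K$; hence $(W_j)_K$ is an irreducible quotient of the generalized Verma module $\mathcal U(\mathfrak g)\otimes_{\mathcal U(\mathfrak k_\C\oplus\mathfrak p_\C^-)}V^*$ (with $\mathfrak p_\C^-$ acting trivially on $V^*$), which, by Lemma \ref{lem:022}\ref{lem:022:4}, \ref{lem:022:5:2} and \eqref{eq:041}, has $(\calH_\rho^*)_K$ as its (necessarily unique) irreducible quotient. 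Therefore $(W_j)_K\cong(\calH_\rho^*)_K$, whence $W_j\cong\pi_\rho^*$ by the uniqueness in Lemma \ref{lem:022}; so $W\subseteq L^2(\Gamma\backslash G)_{[\pi_\rho^*]}$ and $\varphi\in\big(L^2(\Gamma\backslash G)_{[\pi_\rho^*]}\big)_{-\Lambda}$, as wanted.

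Injectivity is immediate from Lemma \ref{lem:012}: if $v_\Lambda^*F_f=0$ then $F_f=0$, so $f(g.0)=\rho(g_0)F_f(g)=0$ for all $g\in G$ and $f\equiv0$ by transitivity of the $G$-action on $\mathcal D$. For surjectivity, note that $\pi_\rho^*\cong\calR_\rho^*$ is integrable (condition \eqref{eq:110} is symmetric) and $m_\Gamma(\pi_\rho^*)<\infty$ (since $\pi_\rho^*\cong\overline{\pi_\rho}$ and $L^2(\Gamma\backslash G)$ is stable under complex conjugation), so Proposition \ref{prop:027} gives $\big(L^2(\Gamma\backslash G)_{[\pi_\rho^*]}\big)_K=\mathrm{span}_\C\{P_\Gamma c_{h,h'}:h,h'\in(\calR_\rho^*)_K\}$. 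Decomposing $h$ into $\mathfrak h_\C$-weight vectors shows the weight $-\Lambda$ subspace is spanned by the series $P_\Gamma c_{h,h'}$ with $h\in(\calR_\rho^*)_{-\Lambda}=\C(\check F_{v_\Lambda})^*$ (see \eqref{eq:042}), and then \eqref{eq:043} and Proposition \ref{prop:009}\ref{prop:009:2} show each such series equals $C_\rho\,P_\Gamma(v_\Lambda^*F_{f'})=C_\rho\,v_\Lambda^*(P_\Gamma F_{f'})$ for a suitable $f'\in\mathcal P(\mathcal D,V)$. Thus a given $\varphi$ in this weight space has the form $C_\rho\sum_ia_i\,v_\Lambda^*(P_\Gamma F_{f_i})$, and I would put $f:=C_\rho\sum_ia_i\,P_{\Gamma,\rho}f_i$. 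Each Poincaré series of polynomial type $P_{\Gamma,\rho}f_i=\sum_{\gamma\in\Gamma}f_i\big|_\rho\gamma$ converges absolutely and locally uniformly on $\mathcal D$ — rewriting $(f_i\big|_\rho\gamma)(x)$ in terms of $F_{f_i}(\gamma g)$ reduces this to Lemma \ref{lem:028}, applied componentwise, each $v^*F_{f_i}$ being a $K$-finite matrix coefficient of the integrable $\calR_\rho^*$ by \eqref{eq:043} and Proposition \ref{prop:009}\ref{prop:009:2} — so $f$ is holomorphic; it is bounded by the boundedness assertion of Lemma \ref{lem:028}, and it satisfies $f\big|_\rho\gamma=f$ because $F_f=C_\rho\sum_ia_i\,P_\Gamma F_{f_i}$ is left $\Gamma$-invariant, $F_{f|_\rho\gamma}(g)=F_f(\gamma g)$, and $f\mapsto F_f$ is injective (Lemma \ref{lem:036}). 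Hence $f\in\calH_\rho^\infty(\Gamma)$ and $\Phi_{\rho,\Gamma}(f)=v_\Lambda^*F_f=\varphi$.

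I expect the main obstacle to be the well-definedness, specifically the fact that $v_\Lambda^*F_f$ lies in the $\pi_\rho^*$-isotypic component: this combines the extraction of $R(\mathfrak p_\C^-)F_f=0$ from the holomorphy of $f$, the $\mathcal Z(\mathfrak g)$-finiteness of $v_\Lambda^*F_f$, Lemma \ref{lem:020}, and the generalized-Verma-module description of the irreducible constituents of the closed $G$-span of $v_\Lambda^*F_f$. Surjectivity, by contrast, is essentially bookkeeping built on Propositions \ref{prop:027} and \ref{prop:009}, once the convergence of the polynomial-type Poincaré series is reduced to Lemma \ref{lem:028}.
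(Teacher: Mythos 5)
Your argument is correct. For well-definedness and injectivity you follow essentially the paper's route: the paper packages your observations about $F_f$ (left $\Gamma$-invariance, boundedness, annihilation by $\mathfrak p_\C^-$, the equivariance \eqref{eq:017}, and the resulting $\mathcal Z(\mathfrak g)$-finiteness and copy of $\rho^*$) into Lemma \ref{lem:018}, then applies Lemma \ref{lem:020} and identifies the irreducible constituents directly by the uniqueness of the irreducible unitary representation of lowest weight $-\Lambda$ (Lemma \ref{lem:022}\ref{lem:022:1}), where you take a slightly longer detour through the uniqueness of the irreducible quotient of a generalized Verma module; both work. The genuine divergence is in surjectivity. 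The paper stays abstract: given $\varphi$ it reduces to the case where $\varphi$ generates an irreducible subspace $H_\varphi\cong\pi_\rho^*$, uses Lemma \ref{lem:022}\ref{lem:022:5} to realize $\varphi$ as the lowest weight vector of the $K$-type $(H_\varphi)_{[\rho^*]}$, and assembles a $V$-valued preimage $F$ out of a $K$-equivalence $V^*\to(H_\varphi)_{[\rho^*]}$, invoking Proposition \ref{prop:027}, Lemma \ref{lem:028}, and \eqref{eq:041} only to know that the functions in $(H_\varphi)_{[\rho^*]}$ are smooth, bounded, and $\mathfrak p_\C^-$-annihilated. You instead produce the preimage explicitly as a Poincar\'e series of polynomial type, which in effect folds the content of Theorem \ref{thm:100a} into the proof of Theorem \ref{thm:019}; this buys you the explicit descriptions \eqref{eq:038} and \eqref{eq:151} for free, at the cost of having to verify the convergence and the membership $f\in\calH_\rho^\infty(\Gamma)$ by hand, which you do correctly via Proposition \ref{prop:009}\ref{prop:009:2}, Lemma \ref{lem:028}, and Lemma \ref{lem:038}, and with no circularity since none of these depend on Theorem \ref{thm:019}. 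One cosmetic slip: the holomorphic extension of $F_f$ lives on the open set $GK_\C P_\C^-$, not on all of $P_\C^+K_\C P_\C^-$ (where $\exp^{-1}(g_+)$ may leave $\mathcal D$); this does not affect your computation of $R(\mathfrak p_\C^-)F_f=0$, which only needs a neighbourhood of $G$.
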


Our proof of Theorem \ref{thm:019} will rely on the properties of the lift $ f\mapsto F_f $ listed in the following lemma.

\begin{lemma}\label{lem:018}
	\begin{enumerate}[label={(\roman*)}]
		\item\label{lem:018:1} Given $ f:\mathcal D\to V $ and $ \gamma\in G $, the following equivalence holds:
		\[ f\big|_\rho\gamma=f\quad\Leftrightarrow\quad F_f(\gamma\spacedcdot)=F_f. \]
		\item\label{lem:018:2} For every $ f:\mathcal D\to V $, we have
		\[ \sup_{g\in G}\norm{F_f(g)}_V=\sup_{x\in \mathcal D}\norm{\rho(\textbf{\textit x}_{\,0})^{-1}f(x)}_V. \]
		\item\label{lem:018:3} A smooth function $ f:\mathcal D\to V $ is holomorphic if and only if $ \mathfrak p_\C^-F_f=0 $. Here, $ \mathfrak p_\C^-\subseteq\mathcal U(\mathfrak g) $ acts by left-invariant differential operators.
		\item\label{lem:018:4} Let $ F:G\to V $ be a smooth function that satisfies \eqref{eq:017} and is such that $ \mathfrak p_\C^- F=0 $. Then, $ F $ is $ \mathcal Z(\mathfrak g) $-finite.
		\item\label{lem:018:5} Let $ F\in L^2(\Gamma\backslash G,V)\setminus\left\{0\right\} $ be such that \eqref{eq:017} holds. Then, the assignment $ v^*\mapsto v^*F $ defines a $ K $-equivalence of $ (\rho^*,V^*) $ with the irreducible $ K $-invariant subspace $ V^*F:=\left\{v^*F:v\in V\right\} $ of $ (R,L^2(\Gamma\backslash G)) $. In particular, $ v_\Lambda^*F\in\left(V^*F\right)_{-\Lambda} $ is the lowest weight vector of the $ K $-representation $ V^*F $.
	\end{enumerate}
\end{lemma}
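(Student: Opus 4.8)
The plan is to treat the five parts in order; \ref{lem:018:1} and \ref{lem:018:2} are formal, \ref{lem:018:3} is the crux, and \ref{lem:018:4}--\ref{lem:018:5} then follow from \ref{lem:018:3} together with standard facts. For \ref{lem:018:1} I would first record that $\big|_\rho$ is a genuine right action of $G$ on $V^{\mathcal D}$: this comes down to the cocycle identity $(g_1g_2\exp x)_0=(g_1\exp(g_2.x))_0\,(g_2\exp x)_0$ in $K_\C/K$, which follows from uniqueness of the factorization \eqref{eq:200}, the equality $(g_2\exp x)_+=\exp(g_2.x)$, and the fact that $K_\C$ normalizes $P_\C^-$. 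Granting this, $F_{f|_\rho\gamma}(g)=\big((f|_\rho\gamma)|_\rho g\big)(0)=\big(f|_\rho(\gamma g)\big)(0)=F_f(\gamma g)$ for all $g$, so $F_{f|_\rho\gamma}=F_f(\gamma\spacedcdot)$, and the equivalence follows from injectivity of $f\mapsto F_f$ (Lemma \ref{lem:036}). For \ref{lem:018:2}, given $x\in\mathcal D$ and any $g\in G$ with $g.0=x$ one has $g_0K=\textbf{\textit x}_0$, whence $\norm{F_f(g)}_V=\norm{\rho(g_0)^{-1}f(x)}_V=\norm{\rho(\textbf{\textit x}_0)^{-1}f(x)}_V$ (the last expression being meaningful because $\rho|_K$ is unitary); taking suprema and using \eqref{eq:125} to pass from $G$ to $G/K$ gives the identity.

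Part \ref{lem:018:3} is the classical translation of the Cauchy--Riemann equations on $\mathcal D\cong G/K$ into annihilation by the left-invariant operators from $\mathfrak p_\C^-$ (equivalently, the $\bar\partial$-equation for sections of the homogeneous holomorphic bundle $G\times_K V$), and I would prove both implications by one computation, along the lines of \cite[\S4]{repka79}. Differentiating the factorization \eqref{eq:200} one finds, for $Y\in\mathfrak g$, that $\tfrac{d}{dt}\big|_0(g\exp tY).0=\mathrm{Ad}(g_0)\big(Y^{[+1]}\big)$, where $Y^{[+1]}$ is the $\mathfrak p_\C^+$-component of $Y$ in the three-grading $\mathfrak g_\C=\mathfrak p_\C^+\oplus\mathfrak k_\C\oplus\mathfrak p_\C^-$, together with a companion formula for $\tfrac{d}{dt}\big|_0\rho((g\exp tY)_0)^{-1}$. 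Substituting into $F_f(g)=\rho(g_0)^{-1}f(g.0)$, and using that $X^{[+1]}=X^{[0]}=0$ for $X\in\mathfrak p_\C^-$ (so the $\rho$-differential term drops out), one gets a formula of the shape $(XF_f)(g)=\rho(g_0)^{-1}\cdot\big(\text{a }\C\text{-antilinear derivative of }f\text{ at }g.0\big)$, with the relevant directions ranging over all of $T_{g.0}\mathcal D\cong\mathfrak p_\C^+$ as $X$ ranges over $\mathfrak p_\C^-$. Since $\{g.0:g\in G\}=\mathcal D$, this yields $\mathfrak p_\C^-F_f=0\Leftrightarrow\bar\partial f=0\Leftrightarrow f$ holomorphic. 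I expect this computation to be the main obstacle: it is routine but must be carried out carefully, the subtle point being that $\mathfrak p_\C^-\cap\mathfrak g=0$, so $XF_f$ has to be evaluated by writing $X=X_1+iX_2$ with $X_j\in\mathfrak g$, and $G$ is genuinely a totally real submanifold of $G_\C$. (For the forward implication by itself there is a shortcut: when $f$ is holomorphic, $g\mapsto\rho(g_0)^{-1}f(\exp^{-1}(g_+))$ is a holomorphic extension of $F_f$ to the open set $\{g\in P_\C^+K_\C P_\C^-:\exp^{-1}(g_+)\in\mathcal D\}\supseteq G$ of $G_\C$, and it is constant along $t\mapsto g\exp(tX)$ for $X\in\mathfrak p_\C^-$ since then $(g\exp tX)_+=g_+$ and $(g\exp tX)_0=g_0$, forcing $XF_f=0$.)

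For \ref{lem:018:4} the plan is to present the cyclic $\mathcal U(\mathfrak g)$-module $M:=\mathcal U(\mathfrak g)F$ as a quotient of a generalized Verma module for the parabolic $\mathfrak q:=\mathfrak k_\C\oplus\mathfrak p_\C^-$ (opposite to $\mathfrak b$). Differentiating \eqref{eq:017} gives $XF=-d\rho(X)\circ F$ for $X\in\mathfrak k_\C$, so $W:=\mathcal U(\mathfrak k_\C)F=d\rho\big(\mathcal U(\mathfrak k_\C)\big)\circ F$ is a finite-dimensional (of dimension $\le(\dim_\C V)^2$) $\mathfrak k_\C$-module; moreover $\mathfrak p_\C^-W=0$, because a left-invariant operator commutes with post-composition by a fixed element of $\mathrm{End}(V)$ and $\mathfrak p_\C^-F=0$. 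Hence $W$ is a $\mathfrak q$-submodule of $M$ with $\mathfrak p_\C^-$ acting as $0$, and, $\mathcal U(\mathfrak g_\C)$ being free over $\mathcal U(\mathfrak q)$ (PBW), the natural map $\mathcal U(\mathfrak g_\C)\otimes_{\mathcal U(\mathfrak q)}W\to M$ is a $\mathcal U(\mathfrak g_\C)$-surjection. Filtering $W$ by $\mathfrak k_\C$-submodules with irreducible quotients $W_1,\dots,W_r$ and using exactness of $\mathcal U(\mathfrak g_\C)\otimes_{\mathcal U(\mathfrak q)}(-)$, each $\mathcal U(\mathfrak g_\C)\otimes_{\mathcal U(\mathfrak q)}W_i$ is a parabolic Verma module whose cyclic generator — a $\mathfrak b_{\mathfrak k}$-highest weight vector of $W_i$, for a Borel $\mathfrak b_{\mathfrak k}$ of $\mathfrak k_\C$ containing $\mathfrak h_\C$ — is a highest weight vector for $\mathfrak g_\C$ with respect to the Borel $\mathfrak b_{\mathfrak k}\oplus\mathfrak p_\C^-\subseteq\mathfrak q$, hence carries a $\mathcal Z(\mathfrak g)$-infinitesimal character $\chi_i$. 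Consequently $\prod_{i=1}^{r}(z-\chi_i(z))$ annihilates $M$ for every $z\in\mathcal Z(\mathfrak g)$, and since $\mathcal Z(\mathfrak g)$ is finitely generated this forces $\dim_\C\mathcal Z(\mathfrak g)F<\infty$, i.e.\ $F$ is $\mathcal Z(\mathfrak g)$-finite.

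Finally, for \ref{lem:018:5} I would analyse the linear map $T\colon V^*\to L^2(\Gamma\backslash G)$, $Tv^*:=v^*F$ (well defined, with $\norm{v^*F}_{L^2}\le\norm v_V\,\norm F_{L^2(\Gamma\backslash G,V)}$). Using \eqref{eq:017}, unitarity of $\rho|_K$ and the definition \eqref{eq:035} of $\rho^*$ one checks $R(k)(v^*F)=(\rho^*(k)v^*)F$, so $T$ is $K$-equivariant; and $T$ is injective because $v^*F=0$ with $v\neq0$ would force $F=0$ a.e.\ by the argument of Lemma \ref{lem:012} (which uses only \eqref{eq:017} and $\mathrm{span}_\C\rho(K)v=V$, and goes through for a.e.\ defined functions). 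Thus $T$ is a $K$-equivalence of $(\rho^*,V^*)$ onto the irreducible $K$-invariant subspace $V^*F$ of $(R,L^2(\Gamma\backslash G))$; and since $T$ intertwines the $\mathfrak h_\C$-actions it carries weight spaces to weight spaces, so from $v_\Lambda^*\in(V^*)_{-\Lambda}$ being the lowest weight vector of $\rho^*$ (Lemma \ref{lem:022}\ref{lem:022:1}, \eqref{eq:102}) we conclude that $v_\Lambda^*F=Tv_\Lambda^*$ is the lowest weight vector of $V^*F$, of weight $-\Lambda$.
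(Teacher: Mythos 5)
Your proposal is correct. For parts \ref{lem:018:1}, \ref{lem:018:2} and \ref{lem:018:5} the paper simply declares the claims elementary (the last one via Lemma \ref{lem:012} and Lemma \ref{lem:022}\ref{lem:022:1}), and your arguments are exactly the intended ones: the cocycle identity $(g_1g_2\exp x)_0=(g_1\exp(g_2.x))_0(g_2\exp x)_0$ making $\big|_\rho$ a right action, the unitarity of $\rho|_K$ for \ref{lem:018:2}, and the $K$-equivariance plus Lemma-\ref{lem:012}-type injectivity for \ref{lem:018:5}. The only real divergence is in parts \ref{lem:018:3} and \ref{lem:018:4}, where the paper gives no argument whatsoever but cites \cite[Lemma 5.7]{baily_borel66}; you instead supply self-contained proofs. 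These are correct reconstructions of what that reference does: your formula $\tfrac{d}{dt}\big|_0(g\exp tY).0=\mathrm{Ad}(g_0)(Y^{[+1]})$ checks out (using that $\mathrm{Ad}(P_\C^-)$ is unipotent with respect to the grading and $P_\C^+$ is abelian), your holomorphic-extension shortcut for the forward direction of \ref{lem:018:3} is clean, and your generalized-Verma-module argument for \ref{lem:018:4} (present $\mathcal U(\mathfrak g)F$ as a quotient of $\mathcal U(\mathfrak g)\otimes_{\mathcal U(\mathfrak q)}W$ with $W=\mathcal U(\mathfrak k_\C)F$ finite-dimensional and $\mathfrak p_\C^-$-trivial, then kill it by $\prod_i(z-\chi_i(z))$) is the standard mechanism. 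What your route buys is independence from the citation at the cost of the one genuinely computational step — the converse direction of \ref{lem:018:3} — which you correctly identify as the main burden and only sketch; since the paper itself outsources precisely this point, that sketch is an acceptable level of detail, but it is the one place where a referee insisting on full self-containment would ask you to write out the $\bar\partial$ computation.
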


\begin{proof}
	The claims \ref{lem:018:1} and \ref{lem:018:2} are elementary. So is, by Lemma \ref{lem:012}, the first claim in \ref{lem:018:5}, which implies the second claim in \ref{lem:018:5} by Lemma \ref{lem:022}\ref{lem:022:1}. The claims \ref{lem:018:3} and \ref{lem:018:4} follow from \cite[Lemma 5.7]{baily_borel66}. 
\end{proof}

\begin{proof}[Proof of Theorem \ref{thm:019}]
	By Lemmmas \ref{lem:036} and \ref{lem:018}\ref{lem:018:1}--\ref{lem:018:3}, the assignment $ f\mapsto F_f $ defines a linear isomorphism from $ \calH_\rho^\infty(\Gamma) $ onto the space $ L_\rho(\Gamma) $ of bounded, smooth functions $ F:\Gamma\backslash G\to V $ that are annihilated by $ \mathfrak p_\C^- $ and satisfy \eqref{eq:017}. Thus, it remains to show that the assignment $ F\mapsto v_\Lambda^*F $ defines a linear isomorphism
	\[ \Theta:L_\rho(\Gamma)\to \left(L^2(\Gamma\backslash G)_{[\pi_\rho^*]}\right)_{-\Lambda}. \]
	
	Let $ F\in L_\rho(\Gamma) $. Then, $ v_\Lambda^*F\in L^2(\Gamma\backslash G) $ since $ F $ is bounded and $ \vol(\Gamma\backslash G)<\infty $. Moreover, the function $ v_\Lambda^*F $ is smooth, $ \mathcal Z(\mathfrak g) $-finite by Lemma \ref{lem:018}\ref{lem:018:4}, and right $ K $-finite by \eqref{eq:017}. Thus, by Lemma \ref{lem:020}, the smallest closed $ G $-invariant subspace $ H $ of $ \left(R,L^2(\Gamma\backslash G)\right) $ containing $ v_\Lambda^*F $ is an orthogonal sum of finitely many irreducible closed $ G $-invariant subspaces $ H_j $. Furthermore, $ v_\Lambda^*F $ belongs to $ H_{-\Lambda} $ by Lemma \ref{lem:018}\ref{lem:018:5} and is annihilated by both $ \mathfrak k_\C^- $ (by Lemma \ref{lem:018}\ref{lem:018:5}) and $ \mathfrak p_\C^- $ (since $ F $ is) and hence by $ \mathfrak n^- $. By the $ G $-equivariance of the orthogonal projections $ \mathrm{pr}_{H_j}:H\twoheadrightarrow H_j $, the same holds for the functions $ \mathrm{pr}_{H_j}v_\Lambda^*F\in (H_j)_K\setminus\left\{0\right\} $. Thus, by Lemma \ref{lem:022}\ref{lem:022:1}, the subspaces $ H_j $ are unitarily equivalent to $ \pi_\rho^* $, hence $ H\subseteq L^2(\Gamma\backslash G)_{[\pi_\rho^*]} $. Thus, we have 
	\[ v_\Lambda^*F\in H_{-\Lambda}\subseteq \left(L^2(\Gamma\backslash G)_{[\pi_\rho^*]}\right)_{-\Lambda}. \]
	This shows that the operator $ \Theta $ is well-defined.
	 
	The operator $ \Theta $ is injective by Lemma \ref{lem:012}. To prove its surjectivity, let $ \varphi\in\left(L^2(\Gamma\backslash G)_{[\pi_\rho^*]}\right)_{-\Lambda} $.  Since $ m_\Gamma(\pi_\rho^*)=m_\Gamma(\pi_\rho)<\infty $, we can assume without loss of generality that the smallest closed $ G $-invariant subspace $ H_\varphi $ of $ L^2(\Gamma\backslash G)_{[\pi_\rho^*]} $ containing $ \varphi $ is irreducible, hence equivalent to $ \pi_\rho^* $. Since $ \varphi\in(H_\varphi)_{-\Lambda}\setminus\left\{0\right\} $, Lemma \ref{lem:022}\ref{lem:022:5} implies that $ \varphi $ is the lowest weight vector of the irreducible $ K $-representation $ (H_\varphi)_{[\rho^*]} $, hence there exists a $ K $-equivalence $ \Phi:V^*\to(H_\varphi)_{[\rho^*]} $ such that $ \Phi(v_\Lambda^*)=\varphi $. Since by Proposition \ref{prop:027}, Lemma \ref{lem:028}, and \eqref{eq:041}, the elements of $ (H_\varphi)_{[\rho^*]}\subseteq \left(L^2(\Gamma\backslash G)_{[\pi_\rho^*]}\right)_K $ are smooth, bounded functions $ \Gamma\backslash G\to\C $ annihilated by $ \mathfrak p_\C^- $, we can define a function $ F:\Gamma\backslash G\to V $ with the same properties by the condition
	 \begin{equation}\label{eq:029}
	 	\scal{F(g)}v_V=\big(\Phi(v^*)\big)(g),\quad\  g\in G,\ v\in V. 
	 \end{equation}
	For all $ k\in K $ and $ v\in V $, we have
	\[ \begin{aligned}
		\scal{F(\spacedcdot k)}v_V
		&\overset{\eqref{eq:029}}
		=R(k)\Phi(v^*)
		=\Phi\rho^*(k)v^*
		=\Phi(\rho(k)v)^*\\
		&\overset{\eqref{eq:029}}=\scal{F(\spacedcdot)}{\rho(k)v}_V
		=\scal{\rho(k)^{-1}F(\spacedcdot)}{v}_V.
	\end{aligned} \]
	Thus, $ F $ satisfies \eqref{eq:017} hence belongs to $ L_\rho(\Gamma) $. Finally, we have
	\[ \Theta F=v_\Lambda^*F\overset{\eqref{eq:029}}=\Phi(v_\Lambda^*)=\varphi. \]
	This shows that the operator $ \Theta $ is surjective.
\end{proof}

Our next goal is to prove Theorem \ref{thm:100}. We need the following lemma.

\begin{lemma}\label{lem:038}
	Let $ \Gamma $ be a discrete subgroup of $ G $. Let $ f:\mathcal D\to V $ and $ v\in V\setminus\left\{0\right\} $. Then, the following claims are mutually equivalent:
	\begin{enumerate}[label={(\alph*)}]
		\item\label{lem:038:1} The Poincar\'e series 
		\begin{equation}\label{eq:033}
			P_{\Gamma,\rho} f=\sum_{\gamma\in\Gamma}f\big|_\rho\gamma
		\end{equation}
		converges absolutely and uniformly on compact subsets of $ \mathcal D $.
		\item\label{lem:038:2} The Poincar\'e series
		\begin{equation}\label{eq:034}
			P_\Gamma F_f=\sum_{\gamma\in\Gamma}F_f(\gamma\spacedcdot)
		\end{equation}
		converges absolutely and uniformly on compact subsets of $ G $.
		\item\label{lem:038:3} The Poincar\'e series
			\[ P_\Gamma v^*F_f=\sum_{\gamma\in\Gamma}\scal{F_f(\gamma\spacedcdot)}v_V \]
		converges absolutely and uniformly on compact subsets of $ G $.
	\end{enumerate} 
	Moreover, assuming \ref{lem:038:1}--\ref{lem:038:3}, we have
	\begin{equation}\label{eq:031}
		F_{P_{\Gamma,\rho} f}=P_\Gamma F_f.
	\end{equation}
\end{lemma}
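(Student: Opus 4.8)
The plan is to deduce all three equivalences and the identity \eqref{eq:031} from a single observation: since $F_h(g)=(h|_\rho g)(0)$ by \eqref{eq:032} and $|_\rho$ is a right action of $G$, we have $F_{f|_\rho\gamma}(g)=\bigl((f|_\rho\gamma)|_\rho g\bigr)(0)=(f|_\rho(\gamma g))(0)=F_f(\gamma g)$, that is, $F_{f|_\rho\gamma}=F_f(\gamma\spacedcdot)$ for every $\gamma\in G$. First I would combine this with \eqref{eq:032} to get the pointwise identity $\norm{F_f(\gamma g)}_V=\norm{\rho(g_0)^{-1}(f|_\rho\gamma)(g.0)}_V$, and then note that $g\mapsto\rho(g_0)$ is a continuous map $G\to\GL(V)$ (the Harish-Chandra factorization $g\mapsto(g_+,g_0,g_-)$ being biholomorphic and $\rho$ holomorphic on $K_\C$), so that on any compact $Q\subseteq G$ there is $M_Q<\infty$ with $M_Q^{-1}\norm{w}_V\le\norm{\rho(g_0)^{-1}w}_V\le M_Q\norm{w}_V$ for all $g\in Q$ and $w\in V$. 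Hence, uniformly for $g$ in a compact set, $\norm{F_f(\gamma g)}_V$ is comparable to $\norm{(f|_\rho\gamma)(g.0)}_V$.

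Next I would prove \ref{lem:038:1}$\Leftrightarrow$\ref{lem:038:2}. The orbit map $G\to\mathcal D$, $g\mapsto g.0$, is continuous, surjective, and proper: its fibres are the compact cosets $gK$, and $G/K$ is identified with $\mathcal D$ via the biholomorphism $\zeta$. It therefore carries compact subsets of $G$ onto compact subsets of $\mathcal D$, and the preimage of any compact $Q'\subseteq\mathcal D$ is a compact subset of $G$ mapping onto $Q'$. Feeding this into the comparison above (a series of nonnegative functions dominated termwise, up to a constant, by a uniformly convergent one is itself uniformly convergent) shows that $\sum_{\gamma\in\Gamma}\norm{F_f(\gamma\spacedcdot)}_V$ converges uniformly on a given compact subset of $G$ if and only if $\sum_{\gamma\in\Gamma}\norm{(f|_\rho\gamma)(\spacedcdot)}_V$ does on the corresponding compact subset of $\mathcal D$; taking singletons gives the equivalence of pointwise absolute convergence. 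Hence \ref{lem:038:1}$\Leftrightarrow$\ref{lem:038:2}.

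The implication \ref{lem:038:2}$\Rightarrow$\ref{lem:038:3} is just the estimate $\abs{\scal{F_f(\gamma g)}v_V}\le\norm{v}_V\norm{F_f(\gamma g)}_V$. The converse \ref{lem:038:3}$\Rightarrow$\ref{lem:038:2} is the step I expect to be the main obstacle; it follows the pattern of the proof of Lemma \ref{lem:012}. From \eqref{eq:017} one has $(\rho(k)v)^*F_f=(v^*F_f)(\spacedcdot k)$ for every $k\in K$, so, since right translation by $k$ preserves compactness, assumption \ref{lem:038:3} for the given $v$ forces the same property for each $\rho(k)v$. As $\rho$ is irreducible, $\mathrm{span}_\C\rho(K)v=V$, so I would pick $k_1,\dots,k_n\in K$ for which $\rho(k_1)v,\dots,\rho(k_n)v$ form a basis of $V$; then $w\mapsto\bigl(\scal{w}{\rho(k_j)v}_V\bigr)_{j=1}^{n}$ is an injective, hence bounded-below, linear map $V\to\C^{n}$, giving $\norm{F_f(\gamma g)}_V\le c\sum_{j=1}^{n}\abs{(\rho(k_j)v)^*F_f(\gamma g)}$ for some $c>0$. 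Summing over $\gamma\in\Gamma$ and taking suprema over compact subsets of $G$ then gives \ref{lem:038:2}.

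Finally, assuming \ref{lem:038:1}--\ref{lem:038:3}, I would obtain \eqref{eq:031} from $P_\Gamma F_f=\sum_{\gamma\in\Gamma}F_f(\gamma\spacedcdot)=\sum_{\gamma\in\Gamma}F_{f|_\rho\gamma}$: evaluating at a fixed $g$, the right-hand side equals $\sum_{\gamma\in\Gamma}\rho(g_0)^{-1}(f|_\rho\gamma)(g.0)=\rho(g_0)^{-1}\bigl(P_{\Gamma,\rho}f\bigr)(g.0)=F_{P_{\Gamma,\rho}f}(g)$, where the bounded operator $\rho(g_0)^{-1}$ may be pulled out of the absolutely convergent sum. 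Equivalently, $h\mapsto F_h$ commutes with locally uniformly convergent series of functions $\mathcal D\to V$, so $\sum_{\gamma\in\Gamma}F_{f|_\rho\gamma}=F_{\sum_{\gamma\in\Gamma}f|_\rho\gamma}=F_{P_{\Gamma,\rho}f}$.
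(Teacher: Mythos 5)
Your proposal is correct and follows essentially the same route as the paper: the identity $F_{f|_\rho\gamma}=F_f(\gamma\spacedcdot)$ combined with $F_f(\gamma g)=\rho(g_0)^{-1}(f|_\rho\gamma)(g.0)$ and the homeomorphism $G/K\cong\mathcal D$ for \ref{lem:038:1}$\Leftrightarrow$\ref{lem:038:2}, the trivial implication \ref{lem:038:2}$\Rightarrow$\ref{lem:038:3}, and the relation $(\rho(k)v)^*F_f=(v^*F_f)(\spacedcdot k)$ together with $\mathrm{span}_\C\rho(K)v=V$ for the converse. You merely spell out in more detail (properness of the orbit map, the bounded-below linear map $V\to\C^n$) steps the paper leaves implicit.
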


\begin{proof}
	For every $ g\in G $, we have
	\[ 
	\begin{aligned}
		\left(P_\Gamma F_f\right)(g)&\overset{\phantom{\eqref{eq:030}}}=\sum_{\gamma\in\Gamma}F_f(\gamma g)\overset{\eqref{eq:032}}=\sum_{\gamma\in\Gamma}\left(f\big|_\rho\gamma g\right)(0)=\sum_{\gamma\in\Gamma}\left(f\big|_\rho\gamma \big|_\rho g\right)(0)\\
		&\overset{\eqref{eq:030}}=\rho(g_0)^{-1}\sum_{\gamma\in\Gamma}\left(f\big|_\rho\gamma\right)(g.0)=\rho(g_0)^{-1}(P_{\Gamma,\rho} f)(g.0).
	\end{aligned}
	\]
	This implies the equivalence of claims \ref{lem:038:1} and \ref{lem:038:2} since the assignment $ gK\mapsto g.0 $ defines a homeomorphism $ G/K\to\calD $. Next, \ref{lem:038:2} trivially implies \ref{lem:038:3}. Conversely, if \ref{lem:038:3} holds, then for every $ k\in K $, the series
	\[ \begin{aligned}
		\sum_{\gamma\in\Gamma}\scal{F_f(\gamma\spacedcdot)}{\rho(k)v}_V
		&=\sum_{\gamma\in\Gamma}\scal{\rho(k)^{-1}F_f(\gamma\spacedcdot)}{v}_V\overset{\text{Lem.\,\ref{lem:036}}}=\sum_{\gamma\in\Gamma}\scal{F_f(\gamma\spacedcdot k)}{v}_V\\&=\left(P_\Gamma v^*F_f\right)(\spacedcdot k)
	\end{aligned} \]
	converges absolutely and uniformly on compact subsets of $ G $, which implies \ref{lem:038:2} since the vectors $ \rho(k)v $ span $ V $. Finally, \eqref{eq:031} follows directly from \eqref{eq:032}, \eqref{eq:033}, and \eqref{eq:034}. 
\end{proof}

Theorem \ref{thm:100} is subsumed within the following theorem.

\begin{theorem}\label{thm:100a}
	Suppose that the representation $ \pi_\rho $ is integrable (see Lemma \ref{lem:022}\ref{lem:022:6}).
	Let $ \Gamma $ be a discrete subgroup of finite covolume in $ G $ such that $ m_\Gamma(\pi_\rho)<\infty $. Then, for every $ f\in\mathcal P(\mathcal D,V) $, the Poincar\'e series $ P_{\Gamma,\rho}f $ (resp., $ P_\Gamma v_\Lambda^*F_f $) converges absolutely and uniformly on compact subsets of $ \mathcal D $ (resp., $ G $), and we have
	\begin{equation}\label{eq:038}
		\calH_\rho^\infty(\Gamma)=\left\{P_{\Gamma,\rho}f:f\in\mathcal P(\mathcal D,V)\right\}
	\end{equation}
	and
	\begin{equation}\label{eq:151}
		\left(L^2(\Gamma\backslash G)_{[\pi_\rho^*]}\right)_{-\Lambda}=\left\{P_{\Gamma}v_\Lambda^*F_f:f\in\mathcal P(\mathcal D,V)\right\}.
	\end{equation}
\end{theorem}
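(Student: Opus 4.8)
The plan is to combine Theorem \ref{thm:019}, Proposition \ref{prop:027}, Lemma \ref{lem:028}, Proposition \ref{prop:009}, and Lemma \ref{lem:038} in a chain of identifications. First I would note that since $\pi_\rho$ is integrable, so is its contragredient $\pi_\rho^*$, and hence by \eqref{eq:042} and \eqref{eq:043} the vectors $\check F_{v_\Lambda}\in(\calR_\rho)_\Lambda$ and $(\check F_{v_\Lambda})^*\in(\calR_\rho^*)_{-\Lambda}$ are available, with $\calR_\rho^*\cong(\pi_\rho^*,\calH_\rho^*)$. By Proposition \ref{prop:027} applied to $\pi=\pi_\rho^*$, the $(\mathfrak g,K)$-module $\bigl(L^2(\Gamma\backslash G)_{[\pi_\rho^*]}\bigr)_K$ is spanned by the Poincar\'e series $P_\Gamma c_{h,h'}$ with $h,h'\in(\calH_\rho^*)_K$; by Lemma \ref{lem:028} each such series converges absolutely and locally uniformly and is $K$-finite and $\calZ(\mathfrak g)$-finite.

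Next I would isolate the weight $-\Lambda$ subspace. A standard argument (projecting onto $K$-types, using that $\calR_\rho^*$ has $\rho^*$ with multiplicity one and that $(\calR_\rho^*)_{-\Lambda}$ is one-dimensional spanned by $(\check F_{v_\Lambda})^*$) shows that $\bigl(L^2(\Gamma\backslash G)_{[\pi_\rho^*]}\bigr)_{-\Lambda}$ is spanned by the Poincar\'e series $P_\Gamma c_{(\check F_{v_\Lambda})^*,F^*}$ with $F$ ranging over $(\calR_\rho)_K$; here one takes $h=(\check F_{v_\Lambda})^*$ to be the fixed weight $-\Lambda$ vector of $\pi_\rho^*$ and $h'=F^*$ a general $K$-finite vector of $\calH_\rho^*\cong\calR_\rho^*$. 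Now invoke Proposition \ref{prop:009}\ref{prop:009:2}: $c_{(\check F_{v_\Lambda})^*,F^*}=C_\rho\,v_\Lambda^*\check F$. By \eqref{eq:043}, every $F\in(\calR_\rho)_K$ is of the form $\check F_f$ for some $f\in\mathcal P(\mathcal D,V)$, so $\check F=F_f$ and $v_\Lambda^*\check F=v_\Lambda^*F_f$. Hence
\[
	\bigl(L^2(\Gamma\backslash G)_{[\pi_\rho^*]}\bigr)_{-\Lambda}=\mathrm{span}_\C\bigl\{P_\Gamma v_\Lambda^*F_f:f\in\mathcal P(\mathcal D,V)\bigr\},
\]
with absolute and locally uniform convergence of each such series by Lemma \ref{lem:028}. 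Since $\mathcal P(\mathcal D,V)$ is a vector space and $f\mapsto v_\Lambda^*F_f$ and $f\mapsto P_{\Gamma,\rho}f$ are linear (the latter by Lemma \ref{lem:038}), the span is redundant and one obtains \eqref{eq:151}. Lemma \ref{lem:038} then transfers convergence of $P_\Gamma v_\Lambda^*F_f$ to convergence of $P_\Gamma F_f$ and of $P_{\Gamma,\rho}f$, and gives $F_{P_{\Gamma,\rho}f}=P_\Gamma F_f$. Finally, applying the isomorphism $\Phi_{\rho,\Gamma}^{-1}$ of Theorem \ref{thm:019}, which sends $v_\Lambda^*F_f=v_\Lambda^*F_{\,\cdot\,}$ back to the automorphic form and intertwines $P_\Gamma$ with $P_{\Gamma,\rho}$ via \eqref{eq:031}, yields \eqref{eq:038}.

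The main obstacle I anticipate is the passage from the $K$-finite description in Proposition \ref{prop:027} (span over all $h,h'\in(\calH_\rho^*)_K$, giving the full $(\mathfrak g,K)$-module) to the precise statement that the weight $-\Lambda$ space is spanned by series with the \emph{fixed} first argument $h=(\check F_{v_\Lambda})^*$ and \emph{arbitrary} $K$-finite second argument: one must check that taking the weight $-\Lambda$ component of $P_\Gamma c_{h,h'}$ amounts, after using the weight decomposition of $(\calH_\rho^*)_K$ (Lemma \ref{lem:022}\ref{lem:022:2a}) and multiplicity-one of $\rho^*$, to replacing $h$ by a multiple of $(\check F_{v_\Lambda})^*$. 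This is where the one-dimensionality of $(\calH_\rho^*)_{-\Lambda}$ in Lemma \ref{lem:022}\ref{lem:022:5:2} and the right $G$-equivariance $R(g)P_\Gamma c_{h,h'}=P_\Gamma c_{\pi_\rho^*(g)h,h'}$ do the work; I would also need the analogous fact for the weight-$(-\Lambda)$ projection on the $K$-finite level, which follows from commuting the projection with the $\mathfrak h_\C$-action. Everything else is bookkeeping with the already-established linear isomorphisms and convergence statements.
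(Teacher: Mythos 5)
Your proposal is correct and follows essentially the same route as the paper: convergence via Proposition \ref{prop:009}\ref{prop:009:2} and Lemma \ref{lem:028}, the weight-$(-\Lambda)$ refinement of Proposition \ref{prop:027} via the weight decomposition and the equivariance of $h\mapsto P_\Gamma c_{h,h'}$ (which is exactly how the paper obtains \eqref{eq:039}), and then transfer through Lemma \ref{lem:038} and $\Phi_{\rho,\Gamma}^{-1}$. The "main obstacle" you flag is resolved in the paper precisely by the argument you sketch, so there is no gap.
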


\begin{proof}
	Let $ f\in \mathcal P(\mathcal D,V) $. By \eqref{eq:043} and Proposition \ref{prop:009}\ref{prop:009:2}, the $ K $-finite matrix coefficient $ c_{(\check F_{v_\Lambda})^*,(\check F_f)^*} $ of $ \calR_\rho^*\cong\left(\pi_\rho^*,\calH_\rho^*\right) $ is given by 
	\begin{equation}\label{eq:112}
		c_{(\check F_{v_\Lambda})^*,(\check F_f)^*}=C_\rho\, v_\Lambda^*F_f.
	\end{equation}
	Thus, by Lemmas \ref{lem:028} and \ref{lem:022}\ref{lem:022:6}, the series
	\begin{equation}\label{eq:150}
		P_\Gamma v_\Lambda^*F_f\overset{\eqref{eq:112}}=C_\rho^{-1}P_\Gamma c_{(\check F_{v_\Lambda})^*,(\check F_f)^*} 
	\end{equation}
	converges absolutely and uniformly on compact subsets of $ G $, hence by Lemma \ref{lem:038} the series $ P_{\Gamma,\rho}f $ converges absolutely and uniformly on compact subsets of $ \mathcal D $. 
	
	Next, since $ m_\Gamma(\pi_\rho^*)=m_\Gamma(\pi_\rho)<\infty $, by Lemma \ref{lem:022}\ref{lem:022:2a} the $ (\mathfrak g,K) $-module  
	\[ \left(L^2(\Gamma\backslash G)_{[\pi_\rho^*]}\right)_K\overset{\text{Prop.\,\ref{prop:027}}}=\mathrm{span}_\C\left\{P_\Gamma c_{h,h'}:h,h'\in(\calR_\rho^*)_K\right\} \]	
	decomposes into a direct sum of its weight subspaces, which are, by the $ (\mathfrak g,K) $-equivariance of the assignment $ h\mapsto P_\Gamma c_{h,h'} $, given by
	\begin{equation}\label{eq:039}
		\left(L^2(\Gamma\backslash G)_{[\pi_\rho^*]}\right)_{\lambda}=\mathrm{span}_\C\left\{P_\Gamma c_{h,h'}:h\in(\calR_\rho^*)_\lambda,h'\in(\calR_\rho^*)_K\right\},
	\end{equation}
	where $ \lambda $ goes over the weights of $ (\pi_\rho^*,\calH_\rho^*)\cong\calR_\rho^* $. In the case when $ \lambda=-\Lambda $, by \eqref{eq:042} and \eqref{eq:043} the equality \eqref{eq:039} states that $ \left(L^2(\Gamma\backslash G)_{[\pi_\rho^*]}\right)_{-\Lambda} $ is the space of functions
	\[ P_\Gamma c_{(\check F_{v_\Lambda})^*,(\check F_f)^*}
	\overset{\eqref{eq:150}}=C_\rho P_\Gamma v_\Lambda^*F_f=C_\rho v_\Lambda^*P_\Gamma F_f\overset{\eqref{eq:031}}=C_\rho v_\Lambda^*F_{P_{\Gamma,\rho}f}, \]
	where $ f\in\mathcal P(\mathcal D,V) $. This proves \eqref{eq:151}, and \eqref{eq:038} follows by applying the inverse of the linear isomorphism of Theorem \ref{thm:019} using Lemmas \ref{lem:036} and \ref{lem:012}.
\end{proof}

\section{A non-vanishing result}\label{sec:154}

In this section, we study the non-vanishing of Poincaré series $ P_{\Gamma,\rho}f\in\calH_\rho^\infty(\Gamma) $ of Theorem \ref{thm:100a} and the corresponding automorphic forms $ P_\Gamma v_\Lambda^*F_f\in \left(L^2(\Gamma\backslash G)_{[\pi_\rho^*]}\right)_{-\Lambda} $ using the following vector-valued version of Mui\'c's integral non-vanishing criterion \cite[Theorem 4.1]{muic09}.

\begin{proposition}\label{prop:101}
	Let $ \mathbf G $ be a unimodular, second-countable, locally compact Hausdorff group. Let $ \Gamma $ a discrete subgroup of $  \mathbf G $, and let $ E $ be a complex finite-dimensional Hilbert space.
	Let $ \varphi\in L^1(\mathbf G,E) $. 
	Then, we have the following:
	\begin{enumerate}[label={(\roman*)}]
		\item\label{prop:101:1} The Poincar\'e series 
		\[ \left(P_{\Gamma}\varphi\right)(g)=\sum_{\gamma\in\Gamma}\varphi(\gamma g) \]
		converges absolutely almost everywhere on $  \mathbf G $ and defines an element of $ L^1(\Gamma\backslash \mathbf G,E) $.
		\item\label{prop:101:2} Suppose that there exists a Borel set $ C\subseteq\mathbf G $ with the following properties:
		\begin{enumerate}[label=\textup{(C\arabic*)}]
			\item\label{enum:101:2:1} $ CC^{-1}\cap\Gamma=\left\{1_{ \mathbf G}\right\} $.
			\item\label{enum:101:2:2} $ \int_{C}\norm{\varphi(g)}_E\,dg>\frac12\int_{ \mathbf G}\norm{\varphi(g)}_E\,dg $.
		\end{enumerate}
		Then, we have \[ P_{\Gamma}\varphi\in L^1(\Gamma\backslash \mathbf G,E)\setminus\{0\}. \]
	\end{enumerate}
\end{proposition}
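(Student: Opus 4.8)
The plan is to prove (i) by the standard ``unfolding'' argument applied to the scalar function $g\mapsto\norm{\varphi(g)}_E$, and to prove (ii) by contradiction, adapting Mui\'c's scalar proof \cite[Theorem 4.1]{muic09}; the only genuinely vector-valued ingredient is the triangle inequality in $E$, and second countability of $\mathbf G$ is used only to guarantee that the quotient measure on $\Gamma\backslash\mathbf G$ and Fubini--Tonelli behave as expected.

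For (i): the defining property of the measure on $\Gamma\backslash\mathbf G$, extended from $C_c(\mathbf G)$ to arbitrary nonnegative measurable functions by monotone convergence, applied to $\norm{\varphi(\,\cdot\,)}_E\in L^1(\mathbf G)$, yields
\[ \int_{\Gamma\backslash\mathbf G}\Big(\sum_{\gamma\in\Gamma}\norm{\varphi(\gamma g)}_E\Big)\,dg=\int_{\mathbf G}\norm{\varphi(g)}_E\,dg<\infty. \]
Hence $\sum_{\gamma\in\Gamma}\norm{\varphi(\gamma g)}_E<\infty$ for almost every $g$, and since $E$ is finite-dimensional the series $\sum_{\gamma\in\Gamma}\varphi(\gamma g)$ converges in $E$ at such $g$. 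The resulting function $P_\Gamma\varphi$ is left $\Gamma$-invariant (reindexing the absolutely convergent sum), measurable (being an almost-everywhere limit of partial sums), and pointwise dominated by $\sum_{\gamma\in\Gamma}\norm{\varphi(\gamma\,\cdot\,)}_E$, which is integrable on $\Gamma\backslash\mathbf G$ by the displayed identity; thus $P_\Gamma\varphi\in L^1(\Gamma\backslash\mathbf G,E)$.

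For (ii): assume for contradiction that $P_\Gamma\varphi=0$, i.e.\ $\sum_{\gamma\in\Gamma}\varphi(\gamma g)=0$ for almost every $g\in\mathbf G$. Hypothesis \ref{enum:101:2:1} says precisely that the map $(\gamma,c)\mapsto\gamma c$ from $\Gamma\times C$ to $\mathbf G$ is injective (if $\gamma_1c_1=\gamma_2c_2$ then $\gamma_2^{-1}\gamma_1=c_2c_1^{-1}\in CC^{-1}\cap\Gamma=\{1_{\mathbf G}\}$), so the Borel sets $\gamma C$, $\gamma\in\Gamma$, are pairwise disjoint; in particular $\bigsqcup_{\gamma\in\Gamma\setminus\{1_{\mathbf G}\}}\gamma C\subseteq\mathbf G\setminus C$. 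For almost every $g$ the assumed vanishing gives $\norm{\varphi(g)}_E\le\sum_{\gamma\in\Gamma\setminus\{1_{\mathbf G}\}}\norm{\varphi(\gamma g)}_E$; integrating this over $C$, using Tonelli together with the left-invariance of Haar measure (so that $\int_C\norm{\varphi(\gamma g)}_E\,dg=\int_{\gamma C}\norm{\varphi(h)}_E\,dh$) and then the disjointness just noted, I obtain
\[ \int_C\norm{\varphi(g)}_E\,dg\le\sum_{\gamma\in\Gamma\setminus\{1_{\mathbf G}\}}\int_{\gamma C}\norm{\varphi(h)}_E\,dh\le\int_{\mathbf G\setminus C}\norm{\varphi(h)}_E\,dh=\int_{\mathbf G}\norm{\varphi(h)}_E\,dh-\int_C\norm{\varphi(h)}_E\,dh, \]
where the last equality uses $C\subseteq\mathbf G$ and $\varphi\in L^1$. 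This forces $\int_C\norm{\varphi(h)}_E\,dh\le\tfrac12\int_{\mathbf G}\norm{\varphi(h)}_E\,dh$, contradicting \ref{enum:101:2:2}. Hence $P_\Gamma\varphi\ne 0$.

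I do not expect a serious obstacle: the argument is essentially routine, and the difficulties are of a measure-theoretic bookkeeping nature — justifying the unfolding identity for the nonnegative, non-compactly-supported integrand $\norm{\varphi(\,\cdot\,)}_E$, transferring ``almost everywhere'' statements across the projection $\mathbf G\to\Gamma\backslash\mathbf G$ (null sets pull back to null sets), and checking measurability of $P_\Gamma\varphi$. The step most specific to the vector-valued setting is the inequality $\norm{\varphi(g)}_E\le\sum_{\gamma\neq1_{\mathbf G}}\norm{\varphi(\gamma g)}_E$, immediate from completeness of $E$; one could instead try to reduce to the scalar criterion by pairing $\varphi$ with functionals on $E$, but since that destroys the form of hypothesis \ref{enum:101:2:2} the direct argument above seems cleanest.
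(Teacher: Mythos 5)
Your proposal is correct and is essentially the argument the paper invokes: the paper's proof simply cites the scalar-valued versions (Mui\'c's unfolding argument for convergence and the disjointness-of-translates contradiction for non-vanishing) and notes that one replaces $\left|\spacedcdot\right|$ by $\norm{\spacedcdot}_E$, which is exactly what you have written out in detail. Your identification of the triangle inequality in $E$ as the only genuinely vector-valued ingredient matches the paper's remark.
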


\begin{proof}
	A proof is obtained by replacing the absolute value symbol $ \left|\spacedcdot\right| $ by $ \norm{\spacedcdot}_E $ in the proofs of the scalar-valued versions of these results given in \cite[beginning of \S4]{muic09} (resp., \cite[Theorem 5.3]{zunar19}).
\end{proof}

Let $ \mathfrak a $ be a maximal abelian subspace of $ \mathfrak p $. We fix a choice of the set $ \Sigma^+ $ of positive restricted roots of $ \mathfrak g $ with respect to $ \mathfrak a $ (cf.\ \cite[Ch.\,VI, \S4]{knapp02}) and let
\[ \mathfrak a^+=\left\{X\in\mathfrak a:\lambda(X)>0\text{ for all }\lambda\in\Sigma^+\right\}. \]
We denote by $ m_\lambda $ the multiplicity of a restricted root $ \lambda\in\Sigma^+ $, i.e., the (real) dimension of $ \mathfrak g_{\lambda} $, the restricted root subspace of $ \mathfrak g $ corresponding to $ \lambda $. 

Let $ A^+ $ denote the closure of $ \exp(\mathfrak a^+) $ in $ G $. For every $ g\in G $, there exist $ k_1,k_2\in K $ and a unique $ a\in A^+ $ such that $ g=k_1ak_2  $ \cite[Ch.\,IX, Theorem 1.1]{helgason78}. Moreover, there exists $ M\in\R_{>0} $ such that
\[ M\int_G\varphi(g)\,dg=\int_K\int_{\mathfrak a^+}\int_K\varphi(k_1\exp(X)\,k_2)\left(\prod_{\lambda\in\Sigma^+}(\sinh\lambda(X))^{m_\lambda}\right)\,dk_1\,dX\,dk_2 \]
for all $ \varphi\in L^1(G) $ \cite[(2.14)]{flensted_jensen80} (see also \cite[Proposition 5.28]{knapp86}).
In particular, by Corollary \ref{cor:124} and \eqref{eq:125}, we have the following lemma. 

\begin{lemma}\label{lem:202}
	Suppose that the representation $ \pi_\rho $ is integrable (see Lemma \ref{lem:022}\ref{lem:022:6}). Then, for every $ f\in\mathcal P(\mathcal D,V) $ and for every Borel set $ S\subseteq\mathfrak a^+ $, the integral $ \int_{K\exp(S)K}\norm{F_f(g)}_V\,dg\in\R_{\geq0} $ is given by
		\[ M^{-1}\int_{S}\int_K\norm{F_f(k\exp(X))}_V\left(\prod_{\lambda\in\Sigma^+}(\sinh\lambda(X))^{m_\lambda}\right)\,dk\,dX.  \]
\end{lemma}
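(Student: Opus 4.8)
The plan is to combine the integral formula of Flensted-Jensen (displayed just before the lemma) with the right $K$-invariance of $\norm{F_f(\cdot)}_V$ recorded in \eqref{eq:125}, after first verifying that the indicator function of $K\exp(S)K$ times $\norm{F_f}_V$ is genuinely integrable on $G$. The latter is precisely what Corollary \ref{cor:124} provides: since $\pi_\rho$ is integrable, $F_f\in L^1(G,V)$, so $\norm{F_f(\cdot)}_V\in L^1(G)$ and in particular $\mathbf 1_{K\exp(S)K}\norm{F_f(\cdot)}_V\in L^1(G)$. Thus all the integrals in sight are finite and the manipulations below are justified.

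First I would apply the Flensted-Jensen formula to the function $\varphi(g)=\mathbf 1_{K\exp(S)K}(g)\,\norm{F_f(g)}_V$, which lies in $L^1(G)$ by the previous remark. This gives
\[
M\int_{K\exp(S)K}\norm{F_f(g)}_V\,dg=\int_K\int_{\mathfrak a^+}\int_K \mathbf 1_{K\exp(S)K}\big(k_1\exp(X)k_2\big)\,\norm{F_f(k_1\exp(X)k_2)}_V\Big(\prod_{\lambda\in\Sigma^+}(\sinh\lambda(X))^{m_\lambda}\Big)\,dk_1\,dX\,dk_2.
\]
Next I would observe that for $k_1,k_2\in K$ and $X\in\mathfrak a^+$ the condition $k_1\exp(X)k_2\in K\exp(S)K$ is equivalent, by the uniqueness of the $A^+$-component in the $KA^+K$ decomposition, to $X\in S$; hence the indicator factor equals $\mathbf 1_S(X)$ and is independent of $k_1,k_2$. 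Then \eqref{eq:125} (applied with $g=k_1\exp(X)$ and $k=k_2$, and also using \eqref{eq:013}-type left equivariance which shows $\norm{F_f(k_1\exp(X))}_V=\norm{F_f(\exp(X))}_V$ is likewise $k_1$-independent — actually only $k_2$-invariance is strictly needed) removes the $k_2$-dependence of $\norm{F_f(k_1\exp(X)k_2)}_V$, so the inner $dk_2$-integral contributes a factor $\int_K dk_2=1$. Renaming $k_1$ to $k$ and restricting the $\mathfrak a^+$-integral to $S$ via the indicator, the right-hand side collapses to $\int_S\int_K\norm{F_f(k\exp(X))}_V\big(\prod_{\lambda\in\Sigma^+}(\sinh\lambda(X))^{m_\lambda}\big)\,dk\,dX$, and dividing by $M$ yields the claimed formula.

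The only mildly delicate point — and the step I would be most careful with — is the measure-theoretic bookkeeping in identifying $\{k_1\exp(X)k_2 : k_1,k_2\in K,\ X\in S\}\cap\{KA^+K\text{-normal form}\}$ with the slice $X\in S$: one must use that the map $K\times\overline{\mathfrak a^+}\times K\to G$ has the $A^+$-coordinate uniquely determined (Helgason, Ch.\,IX, Theorem 1.1, as cited), so that $\mathbf 1_{K\exp(S)K}(k_1\exp(X)k_2)=\mathbf 1_S(X)$ holds for the relevant $X\in\mathfrak a^+$, and that the boundary $\partial\mathfrak a^+$ (where the normal form can fail to be unique) has measure zero and so does not affect the integral. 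Everything else is a routine application of Fubini and the normalization $\int_K dk=1$; no essential obstacle remains once integrability from Corollary \ref{cor:124} is in hand.
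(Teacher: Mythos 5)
Your proof is correct and follows exactly the route the paper intends (it states the lemma as an immediate consequence of the Flensted-Jensen $KA^+K$ integration formula, Corollary \ref{cor:124}, and \eqref{eq:125}): apply the formula to $\mathbf 1_{K\exp(S)K}\,\norm{F_f(\spacedcdot)}_V$, use uniqueness of the $A^+$-component to reduce the indicator to $\mathbf 1_S(X)$, and use right $K$-invariance \eqref{eq:125} to integrate out $k_2$. One caution: your parenthetical claim that $\norm{F_f(k_1\exp(X))}_V$ is $k_1$-independent via ``\eqref{eq:013}-type left equivariance'' is false in general ($F_f$ satisfies the right-equivariance \eqref{eq:017}, not \eqref{eq:013}, which is why the $\int_K dk$ over $k_1$ must remain in the final formula), but as you yourself note this is not used, so the argument stands.
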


Our main non-vanishing result is the following theorem, which was stated in the Introduction as Theorem \ref{thm:103a}.

\begin{theorem}\label{thm:103}
	Suppose that the representation $ \pi_\rho $ is integrable (see Lemma \ref{lem:022}\ref{lem:022:6}).
	Let $ \Gamma $ be a discrete subgroup of finite covolume in $ G $ such that $ m_\Gamma(\pi_\rho)<\infty $. 
	Let $ f\in\mathcal P(\mathcal D,V)\setminus\left\{0\right\} $. 
	Suppose that there exists a Borel set $ S\subseteq\mathfrak a^+ $ with the following properties:
	\begin{enumerate}[label=\textup{(S\arabic*)}]
		\item\label{enum:103:1} $ K\exp(S)K\exp(-S)K\cap\Gamma=\left\{1_{G}\right\} $.
		\item\label{enum:103:2} We have
		\begin{equation}\label{eq:122}
			\begin{aligned}
				\int_S\int_K&\norm{F_f(k\exp(X))}_V\left(\prod_{\lambda\in\Sigma^+}(\sinh\lambda(X))^{m_\lambda}\right)\,dk\,dX \\
				&>\frac12\int_{\mathfrak a^+}\int_K\norm{F_f(k\exp(X))}_V\left(\prod_{\lambda\in\Sigma^+}(\sinh\lambda(X))^{m_\lambda}\right)\,dk\,dX.
			\end{aligned}  
		\end{equation}
	\end{enumerate}
	Then, we have
	\[ P_{\Gamma,\rho}f\in\calH_\rho^\infty(\Gamma)\setminus\left\{0\right\} \] and \[ P_{\Gamma}v_\Lambda^*F_f\in \left(L^2(\Gamma\backslash G)_{[\pi_\rho^*]}\right)_{-\Lambda}\setminus\left\{0\right\}. \]
\end{theorem}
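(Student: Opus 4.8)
The plan is to apply Proposition \ref{prop:101} to the group $\mathbf G = G$, the discrete subgroup $\Gamma$, the Hilbert space $E = V$, and the function $\varphi = F_f$. For this to make sense, I first note that $F_f \in L^1(G,V)$ by Corollary \ref{cor:124}, since $\pi_\rho$ is assumed integrable; this is where the integrability hypothesis is consumed. Thus Proposition \ref{prop:101}\ref{prop:101:1} already gives $P_\Gamma F_f \in L^1(\Gamma\backslash G, V)$ with absolute convergence almost everywhere, and by Theorem \ref{thm:100a} (together with Lemma \ref{lem:038} and the identity \eqref{eq:031}) we know that in fact $P_\Gamma F_f = F_{P_{\Gamma,\rho}f}$ converges absolutely and locally uniformly and that $P_{\Gamma,\rho}f \in \calH_\rho^\infty(\Gamma)$ and $P_\Gamma v_\Lambda^* F_f \in \left(L^2(\Gamma\backslash G)_{[\pi_\rho^*]}\right)_{-\Lambda}$. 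So the only thing left to prove is the non-vanishing, and the whole content is to verify the two hypotheses \ref{enum:101:2:1} and \ref{enum:101:2:2} of Proposition \ref{prop:101}\ref{prop:101:2} for a suitable Borel set $C \subseteq G$.

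The natural candidate is $C = K\exp(S)K$, where $S$ is the set furnished by the hypotheses \ref{enum:103:1}--\ref{enum:103:2}. For \ref{enum:101:2:1}: since $K$ is a group and $\exp(-S) = (\exp(S))^{-1}$ elementwise, one computes $C C^{-1} = K\exp(S)K \cdot K\exp(-S)K = K\exp(S)K\exp(-S)K$, which meets $\Gamma$ only in $\{1_G\}$ by assumption \ref{enum:103:1}. For \ref{enum:101:2:2}: this is precisely where Lemma \ref{lem:202} enters. By that lemma, $\int_C \norm{F_f(g)}_V\,dg = \int_{K\exp(S)K}\norm{F_f(g)}_V\,dg$ equals $M^{-1}$ times the left-hand side of \eqref{eq:122}, while $\int_G \norm{F_f(g)}_V\,dg = \int_{K\exp(\mathfrak a^+)K}\norm{F_f(g)}_V\,dg$ — using that the $KA^+K$ decomposition exhausts $G$ up to a set of measure zero (the boundary strata where some $\sinh\lambda(X)$ vanishes) — equals $M^{-1}$ times the right-hand side multiplied by $2$... more precisely, $M^{-1}$ times $\int_{\mathfrak a^+}\int_K \norm{F_f(k\exp X)}_V \prod_\lambda (\sinh\lambda(X))^{m_\lambda}\,dk\,dX$. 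Thus inequality \eqref{eq:122} is, after multiplying both sides by $M^{-1}$, exactly the inequality $\int_C \norm{F_f(g)}_V\,dg > \tfrac12 \int_G \norm{F_f(g)}_V\,dg$ required in \ref{enum:101:2:2}.

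Once both hypotheses are verified, Proposition \ref{prop:101}\ref{prop:101:2} yields $P_\Gamma F_f \in L^1(\Gamma\backslash G, V)\setminus\{0\}$, i.e.\ $F_{P_{\Gamma,\rho}f} \neq 0$. By Lemma \ref{lem:036} this forces $P_{\Gamma,\rho}f \neq 0$ as an element of $\calH_\rho^\infty(\Gamma)$, and since $v_\Lambda \neq 0$, Lemma \ref{lem:012} (applied to $F = P_\Gamma F_f$, which satisfies \eqref{eq:017}) gives $v_\Lambda^* P_\Gamma F_f = P_\Gamma v_\Lambda^* F_f \neq 0$; by Theorem \ref{thm:100a}, or directly by the isomorphism $\Theta$ of Theorem \ref{thm:019}, this is the asserted non-vanishing in $\left(L^2(\Gamma\backslash G)_{[\pi_\rho^*]}\right)_{-\Lambda}$.

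I expect the main obstacle, such as it is, to be purely bookkeeping: getting the constant $M$ to cancel cleanly and making sure the $KA^+K$ integration formula is invoked with the right normalization, so that \eqref{eq:122} matches \ref{enum:101:2:2} on the nose with the factor $\tfrac12$ in the correct place. A secondary point requiring a word of care is that one must check $C = K\exp(S)K$ is genuinely a Borel set (it is the image of the Borel set $K\times S\times K$ under the continuous, proper map $(k_1,X,k_2)\mapsto k_1\exp(X)k_2$, and properness of this map on $K\times \mathfrak a^+\times K$ suffices), and that the measure-zero boundary of the $KA^+K$ cell does not affect either integral. Neither of these is a serious difficulty, so the proof is essentially a direct translation of the hypotheses into the form demanded by Proposition \ref{prop:101}.
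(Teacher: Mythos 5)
Your proposal is correct and follows essentially the same route as the paper's proof: both apply Proposition \ref{prop:101} with $\mathbf G=G$, $E=V$, $\varphi=F_f$, and $C=K\exp(S)K$, translate \ref{enum:101:2:1} and \ref{enum:101:2:2} into \ref{enum:103:1} and \ref{enum:103:2} via Lemma \ref{lem:202}, and then deduce the two non-vanishing statements from \eqref{eq:031}, Lemma \ref{lem:036}, Theorem \ref{thm:100a}, and Theorem \ref{thm:019}. The extra remarks you flag (Borel-measurability of $C$ and the cancellation of the constant $M$) are harmless bookkeeping that the paper leaves implicit.
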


\begin{proof}
	We apply Proposition \ref{prop:101} with $ \mathbf G=G $, $ E=V $, $ \varphi=F_f $, and $ C=K\exp(S)K $, noting that in this case the conditions \ref{enum:101:2:1} and \ref{enum:101:2:2} of Proposition \ref{prop:101}\ref{prop:101:2} are equivalent, respectively, to the conditions \ref{enum:103:1} and \ref{enum:103:2} by Lemma \ref{lem:202}. We conclude that $ P_\Gamma F_f\in L^1(\Gamma\backslash G,V)\setminus\left\{0\right\} $. By \eqref{eq:031}, Theorem \ref{thm:100a}, and Lemma \ref{lem:036}, it follows that $ P_{\Gamma,\rho}f\in\calH_\rho^\infty(\Gamma)\setminus\left\{0\right\} $. Consequently, the function $ P_{\Gamma}v_\Lambda^*F_f=v_\Lambda^*P_{\Gamma}F_f\overset{\eqref{eq:031}}=v_\Lambda^*F_{P_{\Gamma,\rho}f} $
 	belongs to $ \left(L^2(\Gamma\backslash G)_{[\pi_\rho^*]}\right)_{-\Lambda}\setminus\left\{0\right\} $ by Theorem \ref{thm:019}.
\end{proof}

\section{The case of bounded symmetric domains of type $ \textbf{\textit{A\,III}}$}\label{sec:160}

In this section, we make the theory of preceding sections explicit and give an example application of Theorem \ref{thm:103} in the case when the bounded symmetric domain $ \mathcal D $ is of type $ \textbf{\textit{A\,III}}$ in the notation of \cite[Ch.\,X, Table V]{helgason78}, i.e., is biholomorphic to the irreducible Hermitian symmetric space
\[ \SU(p,q)/\mathrm S(\U(p)\times\U(q)) \]
for some fixed integers $ p $ and $ q $ such that $ p\geq q\geq1 $. 

Writing $ n=p+q $ and $ I_{p,q}=\begin{pmatrix}I_p\\&-I_q\end{pmatrix} $, we specify the main objects of the preceding sections as follows. We set
\[ 
	G=\SU(p,q)=\left\{g\in\SL_{n}(\C):g^*I_{p,q}g=I_{p,q}\right\},
\] 
$ K=\mathrm S(\U(p)\times\U(q))=G\cap\U(n) $, and $ G_\C=\SL_{n}(\C) $. Thus, we have
\[ \mathfrak g=\mathfrak{su}(p,q)=\left\{\begin{pmatrix}A&B\\B^*&D\end{pmatrix}\in\mathfrak{sl}_{n}(\C):A\in M_p(\C),\ A^*=-A,\ D^*=-D\right\}, \]
and $ \mathfrak k $ is the subalgebra of anti-Hermitian matrices in $ \mathfrak g $. We let $ \mathfrak p $ (resp., $ \mathfrak h $) be the space of Hermitian (resp., diagonal) matrices in $ \mathfrak g $. Thus, $ \mathfrak h_\C $ is the Lie algebra of diagonal matrices in $ \mathfrak{sl}_n(\C) $. For $ r=1,\ldots,n $, we define a linear functional $ e_r\in\mathfrak h_\C^* $, $ e_r(X)=X_{r,r} $, and set
\[ \begin{aligned}
	\Delta_K^+&=\left\{e_r-e_s:1\leq r<s\leq p\text{ or }p+1\leq r<s\leq n\right\},\\
	\Delta_n^+&=\left\{e_r-e_s:1\leq r\leq p\text{ and }p+1\leq s\leq n \right\}.
\end{aligned} \]

The groups $ K_\C $, $ P_\C^+ $, and $ P_\C^- $ consist, respectively, of the matrices $ \begin{pmatrix}A\\&D\end{pmatrix} $, $ \begin{pmatrix}I_p&B\\&I_q\end{pmatrix} $, and $ \begin{pmatrix}I_p\\C&I_q\end{pmatrix} $, where $ A\in\GL_p(\C) $, $ D\in\GL_q(\C) $, $ \det A\cdot\det D=1 $, $ B\in M_{p,q}(\C) $, and $ C\in M_{q,p}(\C) $. The factorization $ g=g_+g_0g_- $ of a matrix $ g=\begin{pmatrix}A&B\\C&D\end{pmatrix}\in P_\C^+K_\C P_\C^- $ is given by
\begin{equation}\label{eq:114}
	 g=\begin{pmatrix}I_p&BD^{-1}\\&I_q\end{pmatrix}\begin{pmatrix}A-BD^{-1}C\\&D\end{pmatrix}\begin{pmatrix}I_p\\D^{-1}C&I_q\end{pmatrix}. 
\end{equation}
Following \cite[Ch.\,VI, \S2]{knapp86}, we identify the bounded symmetric domain
\[ \mathcal D=\left\{\underline z:=\begin{pmatrix}\ \ &z\\&\ \ \end{pmatrix}\in\mathfrak{gl}_{n}(\C):z\in M_{p,q}(\C),\ I_q-z^*z>0\right\}\subseteq\mathfrak p_\C^+ \]
with
\[ \Omega=\left\{z\in M_{p,q}(\C): I_q-z^*z>0\right\} \]
acted upon by $ G $ by
\begin{equation}\label{eq:115}
	 g.z=(Az+B)(Cz+D)^{-1},\quad g=\begin{pmatrix}A&B\\C&D\end{pmatrix}\in G,\ z\in\Omega. 
\end{equation}

We fix a maximal abelian subspace
\[ \mathfrak a=\left\{H_t\coloneqq\begin{pmatrix}&t_{p\times q}\\t_{q\times p}&\end{pmatrix}:t\in\R^q\right\} \]
 of $ \mathfrak p $, where we use the notation \eqref{eq:163}. For $ r=1,\ldots,q $, let $ \varepsilon_r\in\mathfrak a^* $ such that $ \varepsilon_r(H_t)=t_r $ for all $ t\in\R^q $. We fix the set $ \Sigma^+ $ of positive restricted roots $ \lambda $ listed, along with their multiplicities $ m_\lambda $, in the following table, whose last row should be removed in the case when $ p=q $:
\begin{equation}\label{eq:118}
	\begin{tabular}{rl|c}
	\multicolumn{2}{c|}{$ \lambda\in\Sigma^+ $} & $ m_\lambda $\\ \hline
	$ \varepsilon_r-\varepsilon_s$,& $ 1\leq r<s\leq q $ & $ 2 $\\ 
	$ \varepsilon_r+\varepsilon_s$,&  $ 1\leq r<s\leq q $ & $ 2 $\\
	$ 2\varepsilon_r$,&  $ 1\leq r\leq q $ & $ 1 $\\
	$ \ \varepsilon_r$,&  $ 1\leq r\leq q $ & $ 2(p-q) $\\
\end{tabular}.
\end{equation}
We have $ \mathfrak a^+=\left\{H_t:\ t_1>t_2>\cdots>t_q>0\right\} $. Given $ R\in\R_{>0} $, let
\[ T_R=\left\{t\in\R^q:R>t_1>\cdots>t_q>0\right\}\quad\text{and}\quad S_R=\left\{H_t:t\in T_R\right\}\subseteq\mathfrak a^+. \]

\begin{lemma}\label{lem:116}
	Let $ R\in\R_{>0} $ and $ g\in K\exp(S_R)K\exp(-S_R)K $. Then,
	\[  \norm g^2<n\,\cosh4R.  \]
\end{lemma}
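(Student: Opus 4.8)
The statement concerns the operator norm $\norm g$ of any $g$ lying in the double-coset product $K\exp(S_R)K\exp(-S_R)K$, so the natural approach is to bound $\norm g$ by tracking how the factors multiply. The key observation is that $\norm\cdot$ is submultiplicative with respect to matrix multiplication, and that for $k\in K\subseteq\U(n)$ we have $\norm{kg}=\norm{gk}=\norm g$; moreover $\norm{\cdot}$ is unchanged under the substitution $g\mapsto g^*$. Hence for $g=k_1\exp(H_s)k_2\exp(-H_t)k_3$ with $H_s,H_t\in S_R$ (so $R>s_1>\cdots>s_q>0$ and likewise for $t$) we immediately get
\[ \norm g=\norm{\exp(H_s)\,k_2\,\exp(-H_t)}\leq\norm{\exp(H_s)}\cdot\norm{k_2}\cdot\norm{\exp(-H_t)}=\norm{\exp(H_s)}\cdot\norm{\exp(-H_t)}. \]

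**Main computation.** First I would compute $\exp(H_t)$ explicitly. Since $H_t=\begin{pmatrix}&t_{p\times q}\\t_{q\times p}&\end{pmatrix}$, a direct check (using that $t_{p\times q}t_{q\times p}=\diag(t_1^2,\dots,t_q^2,0,\dots,0)\in M_p(\C)$ and similarly on the other block) shows that $\exp(H_t)$ is block-diagonal after a permutation: on each of the $q$ coordinate planes spanned by $e_r$ and $e_{p+r}$ it acts as $\begin{pmatrix}\cosh t_r&\sinh t_r\\ \sinh t_r&\cosh t_r\end{pmatrix}$, and as the identity on the remaining $p-q$ coordinates. Therefore the singular values of $\exp(H_t)$ are $e^{t_1},\dots,e^{t_q},e^{-t_1},\dots,e^{-t_q}$, together with $1$ with multiplicity $p-q$; in particular the largest singular value is $e^{t_1}$, so the operator norm (largest singular value) of $\exp(\pm H_t)$ equals $e^{t_1}<e^R$. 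However, the statement uses $\norm g=(\tr(g^*g))^{1/2}$, the Frobenius (Hilbert–Schmidt) norm, not the operator norm, so I must instead sum the squares of the singular values: $\norm{\exp(H_t)}^2=\sum_{r=1}^q(e^{2t_r}+e^{-2t_r})+(p-q)=\sum_{r=1}^q 2\cosh(2t_r)+(p-q)$.

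**Putting it together.** Using submultiplicativity of the Frobenius norm, $\norm g^2\leq\norm{\exp(H_s)}^2\cdot\norm{\exp(-H_t)}^2$. For each factor, since $0<s_r<R$ (resp.\ $0<t_r<R$) and $\cosh$ is increasing on $[0,\infty)$, we have $\sum_{r=1}^q 2\cosh(2s_r)+(p-q)<2q\cosh(2R)+(p-q)\leq n\cosh(2R)$ (using $p-q\leq n$ and $\cosh(2R)\geq1$, indeed more simply $2q\cosh 2R+(p-q)\le (2q+p-q)\cosh 2R=n\cosh 2R$). Hence $\norm g^2<(n\cosh 2R)^2=n^2\cosh^2 2R$. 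To reach the stated bound $n\cosh 4R$ I would then use $\cosh^2(2R)=\tfrac12(\cosh 4R+1)\leq\cosh 4R$ and $n^2\cdot\tfrac12(\cosh4R+1)\le n\cosh 4R$ — wait, that needs $n(\cosh 4R+1)\le 2\cosh 4R$, which fails; so instead the clean route is to bound each factor's squared Frobenius norm by $2q\cosh 2R+(p-q)$ and note this is at most $n\cosh 2R$, giving $\norm g^2\le n^2\cosh^2 2R$; then $n^2\cosh^2 2R = n^2\cdot\tfrac12(1+\cosh 4R)\le n^2\cosh 4R$, which is weaker than claimed unless one is more careful. The right fix is to not split symmetrically but to bound $\norm{\exp(H_s)}\cdot\norm{\exp(-H_t)}$ directly: the product $\exp(H_s)\exp(-H_t)$ has, on the $r$-th plane, a $2\times2$ block of Frobenius-norm-square $2\cosh(2(s_r-t_r))\le 2\cosh(2R)$ only if $|s_r-t_r|<R$, but since both lie in $(0,R)$ we do have $|s_r-t_r|<R$; combined with the $k_2$ in between this doesn't literally block-diagonalize, so cleanest is: $\norm{\exp(H_s)}^2\le 2q\cosh 2R+(p-q)<n\cosh 2R$ and likewise for the other, but then take square roots first — $\norm g\le\norm{\exp H_s}\cdot\norm{\exp(-H_t)}<\sqrt{n\cosh 2R}\cdot\sqrt{n\cosh 2R}=n\cosh 2R$, so $\norm g^2<n^2\cosh^2 2R$; finally $n^2\cosh^2 2R\le n\cdot n\cosh^2 2R$ and one checks $n\cosh^2 2R\le\cosh 4R$ is false for large $n$.

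**The honest main obstacle.** The arithmetic of squeezing the crude product bound down to exactly $n\cosh 4R$ is the delicate point: one should bound $\norm g$ before squaring, using $\norm g\le\norm{\exp(H_s)}\,\norm{\exp(-H_t)}$ and the estimate $\norm{\exp(\pm H)}\le\sqrt{n}\,e^{R}$ (since each of the $n$ singular values of $\exp(\pm H)$ is at most $e^R$), giving $\norm g\le n e^{2R}$, hence $\norm g^2\le n^2 e^{4R}\le n^2\cosh 4R$, and then one uses a sharper singular-value count — each factor has at most $q$ singular values exceeding $1$, the rest being $\le 1$ — to replace one factor of $n$ by $1$: $\norm{\exp(H_s)}^2\le q e^{2R}+(p-q)+q e^{-2R}\le 2q\cosh 2R+(p-q)$, and similarly. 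Taking the product of the two \emph{squared} norms and bounding $2\cosh 2R\cdot 2\cosh 2R = (\cosh 4R+1)\cdot 2 \le 4\cosh 4R$ together with $(2q)(2q)+\dots \le n^2$-type counting is where the constants must be tracked carefully; I expect the intended argument keeps only the dominant $e^{2R}$ terms in each factor, multiplies to get $\le n e^{4R}\le n\cosh 4R\cdot 2$, and absorbs the factor $2$ by noticing the strict inequalities $s_r<R$, $t_r<R$ are strict. I would present it by (1) reducing via $K$-invariance to bounding $\norm{\exp(H_s)\exp(-H_t)}$, (2) computing singular values of $\exp(H_t)$ explicitly, (3) bounding $\norm{\exp(H_s)}^2\le 2q\cosh 2R+(p-q)$ and likewise, (4) multiplying and simplifying with $2\cosh^2 2R=\cosh 4R+1<\cosh 4R+\cosh4R$ plus $p-q\le n$ to land at $n\cosh 4R$.
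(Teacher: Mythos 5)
Your proposal does not close, and the repeated ``wait, that fails'' remarks are not incidental: the submultiplicativity route genuinely cannot reach the constant $n\cosh 4R$. The bound is essentially sharp, so any argument that throws away the middle $K$-factor must lose. Concretely, take $p=q=1$, $k_2=\diag(i,-i)\in K$, and $t=l\to R$: then $g=\exp(H_t)k_2\exp(-H_l)$ has $\norm g^2=2\cosh 2(t+l)\to 2\cosh 4R=n\cosh 4R$, while your product bound gives $\norm{\exp(H_t)}^2\norm{\exp(-H_l)}^2=4\cosh^2 2R=2\cosh 4R+2>2\cosh 4R$, and the mixed operator/Frobenius bound gives $e^{2R}\cdot 2\cosh 2R=e^{4R}+1>2\cosh 4R$ as well. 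Strictness of $t_r<R$ cannot absorb the excess, since the gap between your estimate and the target is bounded below independently of how close $t_r$ is to $R$. (Also note $\norm{k_2}=\sqrt n$ for the Frobenius norm, not $1$; you need the mixed inequality $\norm{AB}_F\le\norm A_{op}\norm B_F$ to drop it, and even then the bound fails as above.)

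The missing idea is that the middle factor $k_2=\bigl(\begin{smallmatrix}A&\\&D\end{smallmatrix}\bigr)$ must be kept inside the computation. The paper's proof writes out $\norm{\exp(H_t)\,k_2\exp(H_{-l})}^2$ entry by entry, groups the four terms attached to each index pair $(r,s)$, and recognizes the result as
\[
\tfrac12\sum_{r,s}\Bigl(\abs{A_{r,s}-D'_{r,s}}^2\cosh 2(t_r+l_s)+\abs{A_{r,s}+D'_{r,s}}^2\cosh 2(t_r-l_s)\Bigr),
\]
so that both hyperbolic weights are $<\cosh 4R$ and the parallelogram law $\norm{A-D'}^2+\norm{A+D'}^2=2(\norm A^2+\norm{D}^2)=2n$ (using $A\in\U(p)$, $D\in\U(q)$) gives exactly $n\cosh 4R$. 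The cancellation between the $\abs{A_{r,s}-D'_{r,s}}^2$ and $\abs{A_{r,s}+D'_{r,s}}^2$ coefficients is what any norm-submultiplicativity argument discards. Your singular-value computation of $\exp(H_t)$ is correct and is a reasonable first step, but you should redo the proof along the lines above rather than trying to patch the constants.
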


\begin{proof}
	We have $ g=k_1\exp(H_t)\begin{pmatrix}A\\&D\end{pmatrix}\exp(H_{-l})k_2 $ for some $ k_1,k_2\in K $, $ A\in\U(p) $, $ D\in\U(q) $, and $ t,l\in T_{R} $. Let $ D'=\begin{pmatrix}D\\&\phantom{D}\end{pmatrix}\in M_p(\C) $. We have
	\[ \begin{aligned}
		\norm g^2
		&=\norm{\begin{pmatrix}\cosh(t_{p\times p})&(\sinh t)_{p\times q}\\(\sinh t)_{q\times p}&\cosh(t_{q\times q})\end{pmatrix}\begin{pmatrix}A\\&D\end{pmatrix}\begin{pmatrix}\cosh(l_{p\times p})&-(\sinh l)_{p\times q}\\-(\sinh l)_{q\times p}&\cosh(l_{q\times q})\end{pmatrix}}^2\\
		&=\sum_{r,s=1}^p\Big(\abs{\cosh t_r\,A_{r,s}\,\cosh l_s-\sinh t_r\,D_{r,s}'\,\sinh l_s}^2\\[-7mm]
		&\phantom{=\sum_{r,s=1}^p\Big(\,}+\abs{\cosh t_r\,A_{r,s}\,\sinh l_s-\sinh t_r\,D_{r,s}'\,\cosh l_s}^2\\[-7mm]
		&\phantom{=\sum_{r,s=1}^p\Big(\,}+\abs{\sinh t_r\,A_{r,s}\,\cosh l_s-\cosh t_r\,D_{r,s}'\,\sinh l_s}^2\\[-7mm]
		&\phantom{=\sum_{r,s=1}^p\Big(\,}+\abs{\sinh t_r\,A_{r,s}\,\sinh l_s-\cosh t_r\,D_{r,s}'\,\cosh l_s}^2\Big),
	\end{aligned} \]
	where we write $ t_r=l_r=0 $ for $ r>q $. One checks easily that the right-hand side equals
	\[ \begin{aligned}
		&\frac12\sum_{r,s=1}^p\left(\abs{A_{r,s}-D_{r,s}'}^2\cosh2(t_r+l_s)+\abs{A_{r,s}+D_{r,s}'}^2\cosh2(t_r-l_s)\right)\\
		&<\frac{\cosh4R}2\left(\norm{A-D'}^2+\norm{A+D'}^2\right)
		=\left(\norm{A}^2+\norm{D}^2\right)\cosh4R\\
		&=n\cosh4R.
	\end{aligned} \]
	This finishes the proof of the lemma.
\end{proof}

	In the example application of our non-vanishing result in Theorem \ref{thm:112} below, the role of $ \Gamma $ will be played by the arithmetic subgroup
	\[ \Gamma_G(1)=\SU(p,q;\Z[i])=G\cap\SL_n(\Z[i]) \]
	of $ G $ and its ``principal congruence subgroups''
	\[ \Gamma_G(N)=G\cap\big(I_{n}+M_n(N\Z[i])\big) \]
	of various levels $ N\in\Z_{>0} $. 

	\begin{lemma}\label{lem:115}
		Let $ N\in\Z_{>0} $.
		Let $ \gamma\in\Gamma_G(N)\setminus K $. Then, we have
		\[ \norm\gamma^2\geq 4N^2+n. \]
	\end{lemma}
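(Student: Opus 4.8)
The plan is to prove the lower bound $\norm\gamma^2 \geq 4N^2 + n$ for $\gamma \in \Gamma_G(N)\setminus K$ by exploiting the congruence condition $\gamma \in I_n + M_n(N\Z[i])$ together with the fact that $\gamma$ is a complex matrix with integer Gaussian entries in $\SU(p,q)$ that does \emph{not} lie in $K = G\cap\U(n)$. First I would write $\gamma = I_n + N\beta$ with $\beta \in M_n(\Z[i])$, so that $\norm\gamma^2 = \tr(\gamma^*\gamma)$. The key structural input is the $KAK$ decomposition: write $\gamma = k_1\exp(H_t)k_2$ with $k_1,k_2\in K$ and $t_1\geq\cdots\geq t_q\geq 0$. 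Since $\norm{\cdot}$ is $K$-bi-invariant (as in the proof of Lemma~\ref{lem:116}), we get $\norm\gamma^2 = \norm{\exp(H_t)}^2 = p - q + 2\sum_{r=1}^q\cosh(2t_r) = n + 2\sum_{r=1}^q(\cosh(2t_r) - 1)$. Thus the claim is equivalent to $\sum_{r=1}^q(\cosh(2t_r)-1) \geq 2N^2$, and since $\gamma\notin K$ forces $t_1 > 0$, it suffices to show $\cosh(2t_1) - 1 \geq 2N^2$, i.e.\ that $t_1$ is bounded below in a way that scales with $N$.

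Next I would extract that lower bound on $t_1$ from arithmetic. The point is that $\gamma\notin K$ means $\gamma$ is not unitary, equivalently $\gamma^*\gamma \neq I_n$; since $\gamma \in I_n + N M_n(\Z[i])$, a short computation gives $\gamma^*\gamma = I_n + N(\beta + \beta^*) + N^2\beta^*\beta \in I_n + N M_n(\Z[i])$ (using $\beta^* \in M_n(\Z[i])$), so $\gamma^*\gamma - I_n$ is a nonzero matrix all of whose entries lie in $N\Z[i]$; hence $\norm{\gamma^*\gamma - I_n}^2 \geq N^2$ (at least one entry has absolute value $\geq N$). On the other hand, from the $KAK$ form, $\gamma^*\gamma$ is conjugate by a unitary to $\exp(2H_t) = \mathrm{diag}(e^{2t_1},\ldots,e^{2t_q},1,\ldots,1,e^{-2t_1},\ldots,e^{-2t_q})$ (with the appropriate block pattern coming from $H_t = \begin{pmatrix}&t_{p\times q}\\t_{q\times p}&\end{pmatrix}$), so the eigenvalues of $\gamma^*\gamma - I_n$ are $e^{\pm 2t_r}-1$ and $0$, and therefore $\norm{\gamma^*\gamma - I_n}^2 = \sum_{r=1}^q\big((e^{2t_r}-1)^2 + (e^{-2t_r}-1)^2\big)$. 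Combining, $\sum_{r=1}^q\big((e^{2t_r}-1)^2 + (e^{-2t_r}-1)^2\big) \geq N^2$.

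The remaining step is the elementary inequality linking $\sum_r\big((e^{2t_r}-1)^2+(e^{-2t_r}-1)^2\big) \geq N^2$ to $\sum_r(\cosh 2t_r - 1) \geq 2N^2$. Set $u_r = e^{2t_r} \geq 1$; then $(u_r-1)^2 + (u_r^{-1}-1)^2 = (u_r-1)^2(1 + u_r^{-2})$ while $\cosh 2t_r - 1 = \tfrac12(u_r + u_r^{-1}) - 1 = \tfrac{(u_r-1)^2}{2u_r}$. So it suffices to check, termwise, that $\tfrac{(u-1)^2}{2u} \geq \tfrac{1}{2}\cdot\tfrac14\big((u-1)^2(1+u^{-2})\big)$ after summing — more cleanly, one wants $4\sum_r(\cosh 2t_r - 1) \geq \sum_r\big((e^{2t_r}-1)^2+(e^{-2t_r}-1)^2\big)$, i.e.\ termwise $\tfrac{2(u-1)^2}{u} \geq (u-1)^2(1+u^{-2})$, i.e.\ $\tfrac{2}{u} \geq 1 + u^{-2}$ for $u\geq 1$, i.e.\ $2u \geq u^2 + 1 = (u-1)^2 + 2u - 1$, which fails for $u > 1$. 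This tells me the naive termwise route overshoots, so instead I would argue directly: from $\norm{\gamma^*\gamma - I_n}^2 \geq N^2$ pick the index $r_0$ with $(e^{2t_{r_0}}-1)^2 + (e^{-2t_{r_0}}-1)^2$ maximal, so this single term is $\geq N^2/q \geq N^2/q$; but actually the cleanest path avoids $q$ in the denominator: since at least one \emph{entry} of $\gamma^*\gamma - I_n$ has modulus $\geq N$, and $\gamma^*\gamma - I_n = \gamma^*\gamma - I_n$ is Hermitian with operator norm equal to $\max_r|e^{\pm 2t_r}-1| = e^{2t_1}-1$ (the largest in absolute value, for $t_1\geq 0$), and the operator norm of a Hermitian matrix dominates the modulus of any entry, we get $e^{2t_1} - 1 \geq N$, hence $e^{2t_1} \geq N+1$, hence $\cosh 2t_1 - 1 = \tfrac12(e^{2t_1} + e^{-2t_1}) - 1 \geq \tfrac12(e^{2t_1} - 1)\cdot\tfrac{e^{2t_1}-1}{e^{2t_1}} \geq \tfrac{N}{2}\cdot\tfrac{N}{N+1}$; this is still slightly weak, so one should instead use $\cosh 2t_1 - 1 = \tfrac{(e^{2t_1}-1)^2}{2e^{2t_1}}$ and note $(e^{2t_1}-1)^2 \geq N^2$ while we need $(e^{2t_1}-1)^2 \geq 2N^2 e^{2t_1}$ — this requires $e^{2t_1} - 1 \geq N\sqrt{2 e^{2t_1}}$, which holds once $N$ is moderately large but needs care. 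The honest conclusion is that the main obstacle is this final optimization of constants: one must be slightly more careful about \emph{which} entry of $\gamma^*\gamma - I_n$ is large and use that $\gamma^*\gamma - I_n$ is congruent mod $N^2$ (not just mod $N$) to $N(\beta+\beta^*)$, which is Hermitian and has integer entries, so that in fact every \emph{diagonal} entry of $\gamma^*\gamma$ lies in $1 + N\Z$ while the largest diagonal entry of $\exp(2H_t)$ is $e^{2t_1}$; forcing a diagonal entry to differ from $1$ by at least $N$ directly among $\{e^{2t_r}, 1, e^{-2t_r}\}$ yields $e^{2t_1} \geq 1 + N$ and then $\norm\gamma^2 = n + 2\sum_r(\cosh 2t_r - 1) \geq n + 2(\cosh 2t_1 - 1) \geq n + (e^{2t_1} + e^{-2t_1} - 2) = n + (e^{t_1} - e^{-t_1})^2 \geq n + \big(\tfrac{e^{2t_1}-1}{e^{t_1}}\big)^2$, and one checks $\big(\tfrac{e^{2t_1}-1}{e^{t_1}}\big)^2 \geq 4N^2$ is equivalent to $e^{2t_1} - 1 \geq 2N e^{t_1}$, i.e.\ $e^{t_1} - e^{-t_1} \geq 2N$, i.e.\ $\sinh t_1 \geq N$, which I would derive from $e^{2t_1} \geq 1 + N$ — but that only gives $\sinh t_1 \geq \tfrac{N}{2\sqrt{1+N}}$, so the constant $4N^2$ in the statement must be coming from a sharper arithmetic fact, presumably that a \emph{nonzero off-diagonal} entry of $\gamma$ itself (not of $\gamma^*\gamma$) lies in $N\Z[i]$ and feeds directly into $\norm\gamma^2 = \sum|\gamma_{jk}|^2 \geq |\gamma_{jj}|^2\cdot(\text{diagonal, each} \geq \text{something}) + |\gamma_{jk}|^2 \geq N^2 + (\text{the } n \text{ diagonal ones})$; indeed if $\gamma\notin K$ is it possible all diagonal entries are forced... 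The cleanest correct proof, which I would ultimately write, is: $\norm\gamma^2 = \sum_{j,k}|\gamma_{jk}|^2 = \sum_j|\gamma_{jj}|^2 + \sum_{j\neq k}|\gamma_{jk}|^2$; since $\gamma \in I_n + NM_n(\Z[i])$, each diagonal entry satisfies $|\gamma_{jj}|^2 = |1 + N\beta_{jj}|^2$, and one shows $\sum_j|\gamma_{jj}|^2 \geq n$ unless many are perturbed (using $|1+Nz|^2 \geq 1$ when $z\in\Z[i]$, with equality iff $z=0$); and $\gamma\notin K$ forces $\gamma^*\gamma\neq I_n$, so \emph{some} entry $(\gamma^*\gamma)_{jk}$ with $j\neq k$ or $(\gamma^*\gamma)_{jj}\neq 1$; tracking this back through Lemma~\ref{lem:116}-style estimates and the $KAK$ eigenvalue computation gives $2\sum_r(\cosh 2t_r-1)\geq 4N^2$ after the correct termwise inequality $(e^{2t}-1)^2 + (e^{-2t}-1)^2 \leq \big(\tfrac12(e^{2t}-e^{-2t})\big)^2 \cdot(\text{something})$... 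I would budget the most proof-writing effort here and sanity-check the constant against small cases ($N=1$, $q=1$) before committing.
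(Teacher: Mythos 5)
Your proposal does not reach a proof: it is an exploration that tries several routes (KAK decomposition plus eigenvalues of $\gamma^*\gamma$, operator-norm bounds, termwise inequalities in $u=e^{2t_r}$), finds in each case that the resulting constant falls short of $4N^2$, and then trails off with an explicitly unfinished sketch. The diagnosis you reach along the way is essentially correct — working with $\gamma^*\gamma-I_n$ only gives you one entry of modulus $\geq N$, hence a single $N^2$ after squaring, which is a factor of $4$ too weak — but you never find the argument that closes the gap.

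The missing idea is to use the defining relations of $\SU(p,q)$ in block form rather than the $KAK$ decomposition. Write $\gamma=\left(\begin{smallmatrix}A&B\\C&D\end{smallmatrix}\right)$ with $B\in M_{p,q}(N\Z[i])$ and $C\in M_{q,p}(N\Z[i])$ (the congruence condition puts the off-diagonal blocks, not just $\gamma-I_n$, in $N\Z[i]$). The two relations $\gamma^*I_{p,q}\gamma=I_{p,q}$ and $\gamma I_{p,q}\gamma^*=I_{p,q}$ give $D^*D=I_q+B^*B$, $AA^*=I_p+BB^*$, and $A^*A=I_p+C^*C$. From these, $\gamma\notin K$ forces \emph{both} $B\neq0$ and $C\neq0$, so $\norm B^2\geq N^2$ and $\norm C^2\geq N^2$ (each nonzero entry has modulus $\geq N$); moreover a single entry $|b_{r,s}|\geq N$ makes $(AA^*)_{r,r}\geq N^2+1$ and $(D^*D)_{s,s}\geq N^2+1$, while all other diagonal entries of $AA^*$ and $D^*D$ are $\geq1$, so $\norm A^2\geq N^2+p$ and $\norm D^2\geq N^2+q$. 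Summing the four blocks gives $\norm\gamma^2\geq 4N^2+n$ exactly: one $N^2$ from each of $A$, $B$, $C$, $D$, and $p+q=n$ from the diagonals. This is why the constant is $4N^2$; no spectral analysis of $\gamma^*\gamma$ is needed, and indeed your eigenvalue route cannot see the four separate contributions.
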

	
	\begin{proof}
		We have $ \gamma=\begin{pmatrix}A&B\\C&D\end{pmatrix} $ for some $ A\in M_p(\Z[i]) $, $ B\in M_{p,q}(N\Z[i]) $, $ C\in M_{q,p}(N\Z[i]) $, and $ D\in M_{q}(\Z[i]) $.  Since $ \gamma\in G\setminus K $, we have  $ B\neq0 $ or $ C\neq0 $. Moreover, the equalities $ \gamma^*I_{p,q}\gamma=\gamma I_{p,q}\gamma^*=I_{p,q} $ imply that $ D^*D-B^*B=I_q $ and $ AA^*-BB^*=A^*A-C^*C=I_p $. Thus, $ A\notin\U(p) $, and both $ B $ and $ C $ are non-zero matrices with coefficients in $ N\Z[i] $. In particular, $ \norm B^2\geq N^2 $, $ \norm C^2\geq N^2 $, and $ \abs{b_{r,s}}\geq N $ for some $ r\in\left\{1,\ldots,p\right\} $ and $ s\in\left\{1,\ldots,q\right\} $, hence $ \left(BB^*\right)_{r,r}\geq N^2 $ and $ \left(B^*B\right)_{s,s}\geq N^2 $. Consequently, $ \left(AA^*\right)_{r,r}=\left(BB^*\right)_{r,r}+1\geq N^2+1 $ and $ \left(D^*D\right)_{s,s}=\left(B^*B\right)_{s,s}+1\geq N^2+1 $. Since moreover the equalities $ AA^*=BB^*+I_p $ and $ D^*D=B^*B+I_q $ imply that $ \left(AA^*\right)_{k,k}\geq1 $ for $ k=1,\ldots,p $ and $ \left(D^*D\right)_{k,k}\geq1 $ for $ k=1,\ldots,q $, it follows that $ \norm A^2=\sum_{k=1}^p\left(AA^*\right)_{k,k}\geq N^2+p $ and $ \norm D^2=\sum_{k=1}^q\left(D^*D\right)_{k,k}\geq N^2+q $. Thus, $ \norm\gamma^2=\norm A^2+\norm B^2+\norm C^2+\norm D^2\geq 4N^2+n $.
	\end{proof}

	Let us define the functions $ \mu_{p,q}:T_1\to\R_{>0} $,
	\[ \mu_{p,q}(x)=\left(\prod_{r=1}^q\frac{x_r^{p-q}}{(1-x_r)^{n}}\right)\prod_{1\leq r<s\leq q}(x_r-x_s)^2, \]
	and $ \nu_n:\Z_{>0}\to[0,1] $,
	\begin{equation}\label{eq:155}
		\nu_n(N)=\left(\sqrt{1+\frac n{2N^2}}+\sqrt{\frac n{2N^2}}\right)^{-2}.  
	\end{equation}

\begin{theorem}\label{thm:112}
	Let $ (\rho,V) $ be an irreducible unitary representation of $ K $ of highest weight $ \Lambda=\sum_{r=1}^{n-1}a_re_r $, where $ a_1,\ldots,a_{n-1}\in\Z $, $ 1-2n\geq a_1\geq\ldots\geq a_p $, and $ a_{p+1}\geq\ldots\geq a_{n-1}\geq0 $. Then, extending $ \rho $ to an irreducible holomorphic representation of $ K_\C $, we have the following:
	\begin{enumerate}[label={(\roman*)}]
		\item\label{thm:112:1} Let $ \Gamma $ be a discrete subgroup of finite covolume in $ G $ such that $ m_\Gamma(\pi_\rho)<\infty $ (e.g., $ \Gamma=\Gamma_G(N) $ for some $ N\in\Z_{>0} $). Then, for every $ f\in\mathcal P(\calD,V) $, the Poincar\'e series
		\begin{equation}\label{eq:205}
			\left(P_{\Gamma,\rho}f\right)\left(\hspace{.1mm}\underline z\hspace{.1mm}\right)=\sum_{\gamma=\left(\begin{smallmatrix}A&B\\C&D\end{smallmatrix}\right)\in\Gamma}\rho\begin{pmatrix}A-(\gamma.z)C\\&Cz+D\end{pmatrix}^{-1}f\left(\hspace{.2mm}\underline{\gamma.z}\hspace{.2mm}\right),\quad z\in\Omega,
		\end{equation}
		(resp., $ P_{\Gamma}v_\Lambda^*F_f $)
		converges absolutely and uniformly on compact subsets of $ \calD $ (resp., $ G $), and we have \eqref{eq:038} and \eqref{eq:151}.
		\item\label{thm:112:2} Let $ f\in\mathcal P(\calD,V)\setminus\left\{0\right\} $. Then, there exists the smallest integer $ N\geq3 $, which we will denote by $ N_0(\rho,f) $, such that
		\begin{equation}\label{eq:113}
			\int_{T_{\nu_{n}(N)}}\int_K\varphi_{\rho,f}(k,x)\,\mu_{p,q}(x)\,dk\,dx>\frac12\int_{T_{1}}\int_K\varphi_{\rho,f}(k,x)\,\mu_{p,q}(x)\,dk\,dx,
		\end{equation}
		where the function $ \varphi_{\rho,f}:K\times T_{1}\to\R_{\geq0} $ is given by
		\begin{equation}\label{eq:206}
			 \varphi_{\rho,f}(k,x)=\norm{\rho\begin{pmatrix}(I_p-x_{p\times p})^{-\frac12}\\&(I_q-x_{q\times q})^{\frac12}\end{pmatrix}\rho(k)^{-1}f\left(A\left(x^{\frac12}\right)_{p\times q}D^*\right)}_V, 
		\end{equation}
		where $ A\in\U(p) $ and $ D\in\U(q) $ are such that $ k=\begin{pmatrix}A\\&D\end{pmatrix} $.
		\item\label{thm:112:3} Let $ f\in\mathcal P(\calD,V)\setminus\left\{0\right\} $. Then, we have
		\[ P_{\Gamma_G(N),\rho}f\in\calH_\rho^\infty(\Gamma_G(N))\setminus\left\{0\right\} \]
		and
		\[ P_{\Gamma_G(N)}v_\Lambda^*F_f\in \left(L^2(\Gamma_G(N)\backslash G)_{[\pi_\rho^*]}\right)_{-\Lambda}\setminus\left\{0\right\} \]
		for all integers $ N\geq N_0(\rho,f) $.
	\end{enumerate}
\end{theorem}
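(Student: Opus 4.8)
The plan is to obtain part \ref{thm:112:1} as a specialization of Theorem \ref{thm:100a}, part \ref{thm:112:2} from an explicit change of variables into Cartan coordinates on $\SU(p,q)$, and part \ref{thm:112:3} by feeding the outcome of part \ref{thm:112:2} into Theorem \ref{thm:103}. For part \ref{thm:112:1} I would first check that the constraints on $a_1,\dots,a_{n-1}$ put $\Lambda=\sum_{r=1}^{n-1}a_re_r$ into the class fixed before Lemma \ref{lem:022}. Analytic integrality \ref{enum:002:1} is immediate since $a_r\in\Z$. Using that $H_{e_r-e_s}$ is the diagonal matrix with $1$ in position $r$ and $-1$ in position $s$, that $\delta(H_{e_r-e_s})=s-r$ for $A_{n-1}$, and the convention $a_n\coloneqq0$, one gets $\Lambda(H_\alpha)=a_r-a_s\ge0$ for $\alpha=e_r-e_s\in\Delta_K^+$ (this is \ref{enum:002:2}), while for $\alpha=e_r-e_s\in\Delta_n^+$, where $1\le r\le p<s\le n$, one has $a_r\le1-2n$ and $a_s\ge0$, so that $(\Lambda+\delta)(H_\alpha)=(a_r-a_s)+(s-r)\le(1-2n)+(n-1)=-n$; since $\sum_{\beta\in\Delta}\abs{\beta(H_\alpha)}=4n-4$ this is in fact $<-\tfrac14\sum_{\beta\in\Delta}\abs{\beta(H_\alpha)}=1-n$, so \ref{enum:002:3} and the integrability condition \eqref{eq:110} both hold, and $\pi_\rho$ is integrable by Lemma \ref{lem:022}\ref{lem:022:6}. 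That $\Gamma_G(N)$ is an admissible choice of $\Gamma$ follows from part (2) of Remark \ref{rem:152} via the isomorphism $g\mapsto\left(\begin{smallmatrix}\Re g&\Im g\\-\Im g&\Re g\end{smallmatrix}\right)$ identifying $\SU(p,q)$ with $\mathcal G(\R)$ for a connected semisimple $\Q$-group $\mathcal G$, under which $\Gamma_G(N)$ corresponds to a principal congruence subgroup of $\mathcal G(\Z)$. With this verified, part \ref{thm:112:1} is exactly Theorem \ref{thm:100a}; the explicit shape \eqref{eq:205} of $P_{\Gamma,\rho}f=\sum_{\gamma\in\Gamma}f\big|_\rho\gamma$ is obtained by substituting the factorization \eqref{eq:114} of $\gamma\exp(\underline z)=\left(\begin{smallmatrix}A&Az+B\\C&Cz+D\end{smallmatrix}\right)$, together with \eqref{eq:115}, into \eqref{eq:030}.

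For part \ref{thm:112:2} the main computation is an explicit description of $F_f$ on $K\exp(\mathfrak a^+)$. For $k=\begin{pmatrix}A\\&D\end{pmatrix}\in K$ and $\exp(H_t)=\begin{pmatrix}\cosh(t_{p\times p})&(\sinh t)_{p\times q}\\(\sinh t)_{q\times p}&\cosh(t_{q\times q})\end{pmatrix}$, the factorization \eqref{eq:114} yields $(k\exp(H_t)).0=A(\tanh t)_{p\times q}D^*$ and $(k\exp(H_t))_0=\begin{pmatrix}A\cosh(t_{p\times p})^{-1}\\&D\cosh(t_{q\times q})\end{pmatrix}$; after the substitution $x_r=\tanh^2t_r$ --- a diffeomorphism from $\{t_1>\dots>t_q>0\}$ onto $T_1=\{x:1>x_1>\dots>x_q>0\}$ --- these turn into precisely the data appearing in \eqref{eq:206}, so that $\norm{F_f(k\exp(H_t))}_V=\varphi_{\rho,f}(k,x)$. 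A direct calculation, using $\sinh(t_r-t_s)\sinh(t_r+t_s)=\tfrac{x_r-x_s}{(1-x_r)(1-x_s)}$, $\sinh(2t_r)=\tfrac{2\sqrt{x_r}}{1-x_r}$, $\sinh t_r=\sqrt{x_r/(1-x_r)}$, and $dt_r=\tfrac{dx_r}{2\sqrt{x_r}(1-x_r)}$, then shows that the density $\prod_{\lambda\in\Sigma^+}(\sinh\lambda(H_t))^{m_\lambda}\,dt$ built from the multiplicities in \eqref{eq:118} equals a positive constant times $\mu_{p,q}(x)\,dx$ (the powers of $2$ cancel and the exponents of $x_r$ and of $1-x_r$ collapse to $p-q$ and $-n$). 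Combining this with Lemma \ref{lem:202} and Corollary \ref{cor:124} (applicable since $\pi_\rho$ is integrable) shows that the full integral $\int_{T_1}\int_K\varphi_{\rho,f}\,\mu_{p,q}\,dk\,dx$ equals a positive constant times $\norm{F_f}_{L^1(G,V)}$, hence is finite; it is also positive, since $f\ne0$ forces $F_f\ne0$ by Lemma \ref{lem:036} and $F_f$ is continuous. Finally, $\nu_n$ is increasing with $\lim_{N\to\infty}\nu_n(N)=1$, so the sets $T_{\nu_n(N)}$ increase to $T_1$; by monotone convergence the left-hand side of \eqref{eq:113} is non-decreasing in $N$ and converges to the full integral, which strictly exceeds its own half. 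Hence \eqref{eq:113} holds for all large $N$, the set of integers $N\ge3$ for which it holds is nonempty and bounded below, and $N_0(\rho,f)$ is its least element.

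For part \ref{thm:112:3}, given $N\ge N_0(\rho,f)$ I would apply Theorem \ref{thm:103} with $\Gamma=\Gamma_G(N)$ and $S=S_{R_N}$, where $R_N=\artanh\sqrt{\nu_n(N)}$; under $x_r=\tanh^2t_r$ the set $S_{R_N}$ corresponds to $T_{\nu_n(N)}$, and the substitution of part \ref{thm:112:2} turns condition \ref{enum:103:2} of Theorem \ref{thm:103} into exactly \eqref{eq:113}, which holds by the definition of $N_0(\rho,f)$. It remains to verify condition \ref{enum:103:1}. On one side, Lemma \ref{lem:116} gives $\norm g^2<n\cosh(4R_N)$ for every $g\in K\exp(S_{R_N})K\exp(-S_{R_N})K$, and the formula \eqref{eq:155} is calibrated so that $n\cosh(4R_N)=n+4N^2$: writing $s=\sqrt{n/(2N^2)}$ one has $\sqrt{\nu_n(N)}=\sqrt{1+s^2}-s$, hence $\cosh(2R_N)=\sqrt{1+s^2}/s$ and $\cosh(4R_N)=2(1+s^2)/s^2-1=1+4N^2/n$. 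On the other side, $\Gamma_G(N)\cap K=\{1_G\}$ for $N\ge3$ --- a block-diagonal unitary matrix lying in $I_n+M_n(N\Z[i])$ has its off-diagonal entries killed by the unit column-norm condition, and a diagonal entry $1+Nm$ with $\abs{1+Nm}=1$ is forced to equal $1$ once $N\ge3$ --- whereas $\norm\gamma^2\ge4N^2+n$ for every $\gamma\in\Gamma_G(N)\setminus K$ by Lemma \ref{lem:115}. Therefore $1_G$ is the only element of $\Gamma_G(N)$ lying in $K\exp(S_{R_N})K\exp(-S_{R_N})K$ (and it does lie there), so condition \ref{enum:103:1} holds and Theorem \ref{thm:103} gives $P_{\Gamma_G(N),\rho}f\in\calH_\rho^\infty(\Gamma_G(N))\setminus\{0\}$ and $P_{\Gamma_G(N)}v_\Lambda^*F_f\in\left(L^2(\Gamma_G(N)\backslash G)_{[\pi_\rho^*]}\right)_{-\Lambda}\setminus\{0\}$.

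I expect the only genuinely non-formal step to be the Cartan-coordinate bookkeeping in part \ref{thm:112:2}: extracting the matrix-coefficient formula \eqref{eq:206} from the factorization \eqref{eq:114}, and checking that the product of restricted-root $\sinh$-factors from \eqref{eq:118}, multiplied by the Jacobian of $x_r=\tanh^2t_r$, telescopes exactly to $\mu_{p,q}(x)\,dx$. Everything else is either a direct appeal to the theorems of the preceding sections or a short arithmetic verification concerning $\nu_n$ and $\Gamma_G(N)\cap K$.
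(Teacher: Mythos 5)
Your proposal is correct and follows essentially the same route as the paper: part (i) by verifying that the hypotheses on $\Lambda$ give \ref{enum:002:1}--\ref{enum:002:3} and \eqref{eq:110} and then specializing Theorem \ref{thm:100a} via the factorization \eqref{eq:114}; part (ii) by the substitution $x=\tanh^2 t$ turning the Cartan-decomposition density into $\varphi_{\rho,f}\,\mu_{p,q}$, with monotone convergence and the positivity/finiteness of the full integral coming from Corollary \ref{cor:124}, Lemma \ref{lem:036}, and Lemma \ref{lem:202}; and part (iii) by applying Theorem \ref{thm:103} with $S=S_{\artanh\sqrt{\nu_n(N)}}$, using Lemmas \ref{lem:116} and \ref{lem:115} and the calibration $n\cosh(4\artanh\sqrt{\nu_n(N)})=4N^2+n$ for \ref{enum:103:1}. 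The only differences are that you spell out details the paper leaves implicit (the root computations for $\Lambda$, the identity for $\nu_n$, and $\Gamma_G(N)\cap K=\{1_G\}$ for $N\geq3$), all of which check out.
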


\begin{proof}
	The assumption on $ \Lambda\in\mathfrak h_\C^* $ in the statement of the theorem is easily shown equivalent to the conditions \ref{enum:002:1}--\ref{enum:002:3} and \eqref{eq:110}.
	Thus, the representation $ \pi_\rho $ is well-defined and integrable. Moreover, the expression \eqref{eq:205} for the Poincar\'e series $ P_{\Gamma,\rho}f $ defined by \eqref{eq:033} follows from \eqref{eq:030} since
	\[ (\gamma\exp \underline z\hspace{.1mm})_0=\left(\begin{pmatrix}A&B\\C&D\end{pmatrix}\begin{pmatrix}I_p&z\\&I_q\end{pmatrix}\right)_0\overset{\eqref{eq:114}}=\begin{pmatrix}A-(\gamma.z)C\\&Cz+D\end{pmatrix} \]
	for all $ \gamma=\begin{pmatrix}A&B\\C&D\end{pmatrix}\in\Gamma $ and $ z\in\Omega $. Thus, \ref{thm:112:1} follows directly from Theorem \ref{thm:100a}. 
	
	Next, we apply Theorem \ref{thm:103} with $ S=S_R $ for a suitable $ R\in\R_{>0} $. For every $ N\in\Z_{\geq3} $, we have $ \Gamma_G(N)\cap K=\left\{1_G\right\} $, hence by Lemmas \ref{lem:116} and \ref{lem:115}, the set $ S_R $ satisfies the condition \ref{enum:103:1} of Theorem \ref{thm:103} with $ \Gamma=\Gamma_G(N) $ provided that
	\[ 4N^2+n\geq n\cosh4R,\qquad\text{i.e.,}\qquad R\leq \artanh\sqrt{\nu_n(N)}. \]
	
	Next, we consider the condition \ref{enum:103:2} of Theorem \ref{thm:103} for a fixed $ f\in\mathcal P(\calD,V)\setminus\left\{0\right\} $. By \eqref{eq:032}, \eqref{eq:114}, \eqref{eq:115}, and \eqref{eq:118}, for every $ N\in\Z_{>0} $, writing $ k=\begin{pmatrix}A\\&D\end{pmatrix} $ for $ k\in K $ as in \eqref{eq:206}, we have
	\begin{equation}\label{eq:120}
		 \begin{aligned}
		&\int_{S_{\artanh\sqrt{\nu_n(N)}}}\int_K\norm{F_f(k\exp(X))}_V\left(\prod_{\lambda\in\Sigma^+}(\sinh\lambda(X))^{m_\lambda}\right)\,dk\,dX\\
		&=\int_{T_{\artanh\sqrt{\nu_n(N)}}}\int_K\norm{\rho\begin{pmatrix}\cosh t_{p\times p}&\\&(\cosh t_{q\times q})^{-1}\end{pmatrix}\rho(k)^{-1}f\left(A(\tanh t)_{p\times q}D^*\right)}_V\\
		&\phantom{=\int_{T_{R}}}\cdot\left(\prod_{1\leq r<s\leq q}\sinh^2(t_r-t_s)\,\sinh^2(t_r+t_s)\right)\hspace{-1mm}\left(\prod_{r=1}^q\sinh(2t_r)\,\sinh^{2(p-q)}t_r\right)dk\,dt\\
		&=\int_{T_{\nu_n(N)}}\int_K\varphi_{\rho,f}(k,x)\,\mu_{p,q}(x)\,dk\,dx,
	\end{aligned} 
	\end{equation}
	where the last equality is obtained by introducing the change of variables $ x=\tanh^2(t) $. Analogously, we obtain
	\begin{equation}\label{eq:121}
		 \begin{aligned}
		\int_{\mathfrak a^+}\int_K\norm{F_f(k\exp(X))}_V\left(\prod_{\lambda\in\Sigma^+}(\sinh\lambda(X))^{m_\lambda}\right)\,dk\,dX
		=\int_{T_{1}}\int_K\varphi_{\rho,f}(k,x)\,\mu_{p,q}(x)\,dk\,dx.
	\end{aligned} 
	\end{equation}
	Since the set $ T_1 $ equals the increasing union $ \bigcup_{N=1}^\infty T_{\nu_n(N)} $, by the monotone convergence theorem the integral \eqref{eq:120} tends to \eqref{eq:121} as $ N\to\infty $. Moreover, since $ f\in\mathcal P(\calD,V)\setminus\left\{0\right\} $, by Corollary \ref{cor:124} and Lemma \ref{lem:036} the function $ F_f $ belongs to $ L^1(G,V)\setminus\left\{0\right\} $, hence by Lemma \ref{lem:202} the integral \eqref{eq:121} belongs to $ \R_{>0} $. Thus, \ref{thm:112:2} holds, and the inequality \eqref{eq:113} is satisfied by all integers $ N\geq N_0(\rho,f) $. By \eqref{eq:120} and \eqref{eq:121}, so is the condition \ref{enum:103:2} of Theorem \ref{thm:103} with $ S=S_{\artanh\sqrt{\nu_n(N)}} $. 
	
	Thus, by applying Theorem \ref{thm:103} with $ S=S_{\artanh\sqrt{\nu_n(N)}} $ and $ \Gamma=\Gamma_G(N) $, where $ N\in\Z_{\geq N_0(\rho,f)} $, we obtain \ref{thm:112:3}.
\end{proof}

\begin{example}\label{ex:162}
	Let $ m\in\Z_{\geq2n-1} $. An irreducible unitary representation $ \chi_m $ of $ K $ of highest weight $ -m\sum_{r=1}^pe_r $ is, upon extension to an irreducible holomorphic representation of $ K_\C $, given by
	\[ \chi_m(k)=(\det D)^m,\qquad k=\begin{pmatrix}A\\&D\end{pmatrix}\in K_\C. \]
	Suppose that $ p=q $. For every $ l\in\Z_{\geq0} $, we define a function $ f_{l}\in\mathcal P(\calD,\C) $ by
	\[ f_{l}(\underline z)=(\det z)^l,\quad z\in\Omega. \]
	By applying Theorem \ref{thm:112} to the Poincar\'e series $ P_{\Gamma_G(N),\chi_m}f_l $, we obtain the following non-vanishing result.
	
	\newpage
		
	\begin{corollary}\label{cor:161}
		Suppose that $ p=q $. Let $ m\in\Z_{\geq2n-1} $ and $ l\in\Z_{\geq0} $. Then, we have the following:
		\begin{enumerate}[label=(\roman*)]
			\item The integer $ N_0(\chi_m,f_l) $ defined in Theorem \ref{thm:112}\ref{thm:112:2} is the smallest integer $ N\geq3 $ such that 
			\[ \begin{aligned}
				\int_{T_{\nu_{n}(N)}}&\left(\prod_{r=1}^p x_r^{\frac{l}2}(1-x_r)^{\frac{m}2-n}\right)\left(\prod_{1\leq r<s\leq p}(x_r-x_s)^2\right)\,dx\\
				&>\frac12\int_{T_{1}}\left(\prod_{r=1}^p x_r^{\frac{l}2}(1-x_r)^{\frac{m}2-n}\right)\left(\prod_{1\leq r<s\leq p}(x_r-x_s)^2\right)\,dx.
			\end{aligned} \]
			\item For all integers $ N\geq N_0(\chi_m,f_l) $, the Poincar\'e series
			\[ 
			\left(P_{\Gamma_G(N),\chi_m}f_l\right)\left(\hspace{.1mm}\underline z\hspace{.1mm}\right)
			=\sum_{\left(\begin{smallmatrix}A&B\\C&D\end{smallmatrix}\right)\in\Gamma_G(N)}\frac{\det(Az+B)^l}{\det(Cz+D)^{l+m}},\qquad z\in\Omega, \]
			which converges absolutely and uniformly on compact subsets of $ \calD $, defines an element of  $ \calH_{\chi_m}^\infty(\Gamma_G(N))\setminus\left\{0\right\} $.
		\end{enumerate}
	\end{corollary}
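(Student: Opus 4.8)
The plan is to obtain Corollary \ref{cor:161} as a straightforward specialization of Theorem \ref{thm:112} to $ G=\SU(p,p) $, $ \rho=\chi_m $ (a one-dimensional representation, so $ V=\C $ and $ \norm\spacedcdot_V=\abs\spacedcdot $), and $ f=f_l $. The argument splits into three parts: verifying the hypotheses of Theorem \ref{thm:112}, reading off the closed form of the Poincaré series, and simplifying the integral condition \eqref{eq:113}.

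\emph{Hypotheses.} Since $ q\geq1 $ we have $ p\leq n-1 $, so the highest weight $ -m\sum_{r=1}^pe_r $ of $ \chi_m $ is $ \sum_{r=1}^{n-1}a_re_r $ with $ a_1=\cdots=a_p=-m $ and $ a_{p+1}=\cdots=a_{n-1}=0 $; the inequalities $ 1-2n\geq a_1\geq\cdots\geq a_p $ and $ a_{p+1}\geq\cdots\geq a_{n-1}\geq0 $ required in Theorem \ref{thm:112} then reduce to $ m\geq2n-1 $, which holds by assumption. Because $ p=q $, the matrix $ z $ in $ \mathcal D\equiv\Omega $ is square and $ \det $ is a polynomial in its entries, so $ f_l(\underline z)=(\det z)^l $ defines an element of $ \mathcal P(\mathcal D,\C) $, and $ f_l\neq0 $ since $ l\geq0 $. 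Hence $ \pi_{\chi_m} $ is well-defined and integrable, and Theorem \ref{thm:112} applies verbatim with $ \Gamma=\Gamma_G(N) $ (which satisfies the hypotheses on $ \Gamma $ by Remark \ref{rem:152}).

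\emph{The Poincaré series.} Since $ \chi_m\begin{pmatrix}A'\\&D'\end{pmatrix}=(\det D')^m $, the factor $ \rho(\cdots)^{-1} $ in \eqref{eq:205} equals $ \det(Cz+D)^{-m} $, while $ f_l(\underline{\gamma.z})=\det\big((Az+B)(Cz+D)^{-1}\big)^l=\det(Az+B)^l\det(Cz+D)^{-l} $; their product is the summand $ \det(Az+B)^l/\det(Cz+D)^{l+m} $ appearing in part (ii). Absolute and locally uniform convergence on $ \mathcal D $, together with the membership $ P_{\Gamma_G(N),\chi_m}f_l\in\calH_{\chi_m}^\infty(\Gamma_G(N)) $, then follow from Theorem \ref{thm:112}\ref{thm:112:1}.

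\emph{The integral condition.} The one genuine computation is the evaluation of $ \varphi_{\chi_m,f_l} $ of \eqref{eq:206} when $ p=q $. In that case $ x_{p\times p}=\diag(x_1,\dots,x_p) $ and $ (x^{1/2})_{p\times q}=\diag(x_1^{1/2},\dots,x_p^{1/2}) $ with no zero padding, so $ \chi_m $ of the first block matrix equals $ \det(I_p-x_{p\times p})^{m/2}=\prod_{r=1}^p(1-x_r)^{m/2} $, $ \chi_m(k)^{-1}=(\det D)^{-m} $, and $ f_l\big(A(x^{1/2})_{p\times q}D^*\big)=(\det A)^l\big(\prod_{r=1}^px_r^{1/2}\big)^l\,\overline{\det D}^{\,l} $. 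Taking absolute values and using $ \abs{\det A}=\abs{\det D}=1 $ for $ A\in\U(p) $, $ D\in\U(q) $, all unimodular factors cancel, leaving $ \varphi_{\chi_m,f_l}(k,x)=\prod_{r=1}^px_r^{l/2}(1-x_r)^{m/2} $, which is independent of $ k $. Since $ \int_Kdk=1 $ and, for $ p=q $, $ \mu_{p,p}(x)=\big(\prod_{r=1}^p(1-x_r)^{-n}\big)\prod_{1\leq r<s\leq p}(x_r-x_s)^2 $, the product $ \varphi_{\chi_m,f_l}(k,x)\,\mu_{p,p}(x) $ equals $ \prod_{r=1}^px_r^{l/2}(1-x_r)^{m/2-n}\prod_{1\leq r<s\leq p}(x_r-x_s)^2 $, so \eqref{eq:113} becomes exactly the inequality in part (i); the existence of the smallest such integer $ N\geq3 $ is Theorem \ref{thm:112}\ref{thm:112:2}, and the non-vanishing of $ P_{\Gamma_G(N),\chi_m}f_l $ in $ \calH_{\chi_m}^\infty(\Gamma_G(N)) $ for $ N\geq N_0(\chi_m,f_l) $ is Theorem \ref{thm:112}\ref{thm:112:3}. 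The only step requiring attention is the phase bookkeeping in this last display — checking that the $ \det A $ contribution and the two $ \det D $ contributions all have modulus one so that $ \varphi_{\chi_m,f_l} $ is genuinely a nonnegative real quantity — but it presents no real obstacle.
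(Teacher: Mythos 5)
Your proposal is correct and follows exactly the route the paper intends: the paper offers no separate proof of Corollary \ref{cor:161} beyond the remark that it follows by applying Theorem \ref{thm:112} to $P_{\Gamma_G(N),\chi_m}f_l$, and your specialization — checking the weight inequalities reduce to $m\geq 2n-1$, computing the summand $\det(Az+B)^l/\det(Cz+D)^{l+m}$ from \eqref{eq:205}, and simplifying $\varphi_{\chi_m,f_l}(k,x)\,\mu_{p,p}(x)$ to $\prod_r x_r^{l/2}(1-x_r)^{m/2-n}\prod_{r<s}(x_r-x_s)^2$ using $|\det A|=|\det D|=1$ and $\int_K dk=1$ — is precisely the intended verification. (One cosmetic point: $\chi_m$ of the block-diagonal matrix in \eqref{eq:206} is the $m$-th power of the determinant of the \emph{lower-right} block $(I_q-x_{q\times q})^{1/2}$, which coincides with your $\det(I_p-x_{p\times p})^{m/2}$ only because $p=q$.)
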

	
	In Tables \ref{table:101} and \ref{table:102}, the values of integers $ N_0(\chi_m,f_l) $ are listed for $ p\in\left\{1,2\right\} $ and some choices of $ l $ and $ m $. We computed these values using Wolfram Mathematica 14.1 \cite{mathematica}; the code can be found in \cite{zunar25}. 
	
	\begin{table}[h!]
		\caption{Some values of $ N_0(\chi_m,f_l) $ in the case when $ p=1 $.}\label{table:101}
		\centering\footnotesize
		\begin{tabular}{|c|rrrrrrrrrrrrr|}
			\hline
			\backslashbox{$l$}{$m$}& \phantom{$ \geq $1\hspace{-1mm}}3 & \phantom{$ \geq $1\hspace{-1mm}}4 & \phantom{$ \geq $1\hspace{-1mm}}5 & \phantom{$ \geq $1\hspace{-1mm}}6 & \phantom{$ \geq $1\hspace{-1mm}}7 & \phantom{$ \geq $1\hspace{-1mm}}8 & \phantom{$ \geq $1\hspace{-1mm}}9 & \phantom{$ \geq $\hspace{-1mm}}10 & \phantom{$ \geq $\hspace{-1mm}}11 & \phantom{$ \geq $\hspace{-1mm}}12 & \phantom{$ \geq $\hspace{-1mm}}13 & \phantom{$ \geq $\hspace{-1mm}}14 & \phantom{$ \geq $}\hspace{-1mm}15  \\
			\hline
			0 & 7 & 3 & 3 & 3 & 3 & 3 & 3 & 3 & 3 & 3 & 3 & 3 & 3  \\
			1 & 12 & 5 & 3 & 3 & 3 & 3 & 3 & 3 & 3 & 3 & 3 & 3 & 3  \\
			2 & 16 & 6 & 4 & 3 & 3 & 3 & 3 & 3 & 3 & 3 & 3 & 3 & 3  \\
			3 & 20 & 8 & 5 & 4 & 3 & 3 & 3 & 3 & 3 & 3 & 3 & 3 & 3  \\
			4 & 25 & 9 & 6 & 5 & 4 & 3 & 3 & 3 & 3 & 3 & 3 & 3 & 3  \\
			5 & 29 & 11 & 7 & 5 & 4 & 4 & 3 & 3 & 3 & 3 & 3 & 3 & 3  \\
			6 & 34 & 12 & 8 & 6 & 5 & 4 & 4 & 3 & 3 & 3 & 3 & 3 & 3  \\
			7 & 38 & 13 & 8 & 6 & 5 & 4 & 4 & 4 & 3 & 3 & 3 & 3 & 3  \\
			8 & 42 & 15 & 9 & 7 & 6 & 5 & 4 & 4 & 4 & 3 & 3 & 3 & 3  \\
			9 & 47 & 16 & 10 & 8 & 6 & 5 & 5 & 4 & 4 & 4 & 3 & 3 & 3  \\
			10 & 51 & 18 & 11 & 8 & 7 & 6 & 5 & 4 & 4 & 4 & 4 & 3 & 3  \\
			11 & 56 & 19 & 12 & 9 & 7 & 6 & 5 & 5 & 4 & 4 & 4 & 4 & 3  \\
			12 & 60 & 21 & 13 & 9 & 8 & 6 & 6 & 5 & 5 & 4 & 4 & 4 & 3  \\
			\hline
		\end{tabular}
	\end{table}
	
	\begin{table}[h!]
		\caption{Some values of $ N_0(\chi_m,f_l) $ in the case when $ p=2 $.}\label{table:102}
		\centering\footnotesize
		\begin{tabular}{|c|rrrrrrrrrrrrrr|}
			\hline
			\backslashbox{$l$}{$m$} & \phantom{$ \geq $1\hspace{-1mm}}7 & \phantom{$ \geq $1\hspace{-1mm}}8 & \phantom{$ \geq $1\hspace{-1mm}}9 & \phantom{$ \geq $\hspace{-1mm}}10 & \phantom{$ \geq $\hspace{-1mm}}11 & \phantom{$ \geq $\hspace{-1mm}}12 & \phantom{$ \geq $\hspace{-1mm}}13 & \phantom{$ \geq $\hspace{-1mm}}14 & \phantom{$ \geq $\hspace{-1mm}}15  & \phantom{$ \geq $\hspace{-1mm}}16 & \phantom{$ \geq $\hspace{-1mm}}17 & \phantom{$ \geq $\hspace{-1mm}}18 & \phantom{$ \geq $\hspace{-1mm}}19 & \phantom{$ \geq $\hspace{-1mm}}20\\ \hline
			0 & 50 & 17 & 10 & 8 & 6 & 5 & 5 & 4 & 4 & 4 & 3 & 3 & 3 & 3\\
			1 & 64 & 21 & 13 & 9 & 7 & 6 & 5 & 5 & 4 & 4 & 4 & 4 & 3 & 3 \\
			2 & 77 & 25 & 15 & 11 & 9 & 7 & 6 & 6 & 5 & 5 & 4 & 4 & 4 & 4 \\
			3 & 91 & 29 & 17 & 12 & 10 & 8 & 7 & 6 & 6 & 5 & 5 & 4 & 4 & 4 \\
			4 & 105 & 33 & 19 & 14 & 11 & 9 & 8 & 7 & 6 & 6 & 5 & 5 & 5 & 4 \\
			5 & 119 & 37 & 22 & 15 & 12 & 10 & 8 & 7 & 7 & 6 & 6 & 5 & 5 & 5\\
			6 & 133 & 41 & 24 & 17 & 13 & 11 & 9 & 8 & 7 & 7 & 6 & 6 & 5 & 5 \\
			7 & 147 & 45 & 26 & 18 & 14 & 12 & 10 & 9 & 8 & 7 & 7 & 6 & 6 & 5 \\
			8 & 161 & 49 & 28 & 20 & 15 & 12 & 11 & 9 & 8 & 8 & 7 & 6 & 6 & 6 \\
			9 & 175 & 54 & 30 & 21 & 16 & 13 & 11 & 10 & 9 & 8 & 7 & 7 & 6 & 6 \\
			10 & 189 & 58 & 33 & 23 & 17 & 14 & 12 & 11 & 9 & 8 & 8 & 7 & 7 & 6 \\
			11 & 202 & 62 & 35 & 24 & 19 & 15 & 13 & 11 & 10 & 9 & 8 & 8 & 7 & 7 \\
			12 & 216 & 66 & 37 & 26 & 20 & 16 & 14 & 12 & 10 & 9 & 9 & 8 & 7 & 7 \\
			\hline
		\end{tabular}
	\end{table}

\end{example}

\newpage
 
\bibliographystyle{amsplain}
\linespread{.96}

\end{document}